 \newtheorem{thm}{Theorem}[section]
 \newtheorem{cor}[thm]{Corollary}
 \newtheorem{lem}[thm]{Lemma}
 \newtheorem{defn}[thm]{Definition}
 \newtheorem{rem}[thm]{Remark}
 \numberwithin{equation}{section}
\newcommand{\rd}{\color{red}}
\newcommand{\comment}[1]{}
\begin{document}

%-------------------------------------------------------------------------
% editorial commands: to be inserted by the editorial office
%
%\firstpage{1} \volume{228} \Copyrightyear{2004} \DOI{003-0001}
%
%
%\seriesextra{Just an add-on}
%\seriesextraline{This is the Concrete Title of this Book\br H.E. R and S.T.C. W, Eds.}
%
% for journals:
%
%\firstpage{1}
%\issuenumber{1}
%\Volumeandyear{1 (2004)}
%\Copyrightyear{2004}
%\DOI{003-xxxx-y}
%\Signet
%\commby{inhouse}
%\submitted{March 14, 2003}
%\received{March 16, 2000}
%\revised{June 1, 2000}
%\accepted{July 22, 2000}
%
%
%
%---------------------------------------------------------------------------
%Insert here the title, affiliations and abstract:
%

\title[Stokes and Darcy-Forchheimer-Brinkman {PDE} systems]{Integral potential method for a transmission problem\\ with Lipschitz interface in ${\mathbb R}^3$  for the Stokes and\\ Darcy-Forchheimer-Brinkman PDE systems}

%\titlerunning{The Stokes and Darcy-Forchheimer-Brinkman {PDE} systems in weighted Sobolev spaces}

%\author{Mirela Kohr$^*$\thanks{$^*$ Corresponding author}\and{Massimo Lanza de Cristoforis} \and{Sergey %E. Mikhailov} \and{Wolfgang L. Wendland}}

%\authorrunning{M. Kohr, M. Lanza de Cristoforis, S.E. Mikhailov and W.L. Wendland}

%----------Mirela Kohr
\author[M. Kohr]{Mirela Kohr}
\address{%
Faculty of Mathematics and Computer Science, Babe\c{s}-Bolyai University,\\
1 M. Kog\u alniceanu Str.,
400084 Cluj-Napoca, Romania}

\email{mkohr@math.ubbcluj.ro}

%\thanks{Corresponding author}

%----------Author 2
\author[M. Lanza de Cristoforis]{Massimo Lanza de Cristoforis}
\address{Dipartimento di Matematica, Universit\`{a} degli Studi di Padova,\\
Via Trieste 63,
Padova 35121, Italy}

\email{mldc@math.unipd.it}

%\thanks{}

%\thanks{}

%----------Author 3
\author[S. E. Mikhailov]{Sergey E. Mikhailov}
\address{Department of Mathematics, Brunel University London,\\
             Uxbridge, UB8 3PH, United Kingdom}

\email{sergey.mikhailov@brunel.ac.uk}

%----------Author 4
\author[W. L. Wendland]{Wolfgang L. Wendland}
\address{Institut f\"ur Angewandte Analysis und Numerische Simulation, Universit\"at Stuttgart,\\
Pfaffenwaldring, 57,
70569 Stuttgart, Germany}

\email{wendland@mathematik.uni-stuttgart.de}

%----------classification, keywords, date
\subjclass{Primary 35J25, 35Q35, 42B20, 46E35; Secondary 76D, 76M}

\keywords{The Stokes system, the Darcy-Forchheimer-Brinkman system, transmission problems, Lipschitz domains in ${\mathbb R}^3$, layer potentials, weighted Sobolev spaces, fixed point theorem.}

\date{}
%----------additions
\dedicatory{{\em Published in:} Z. Angew. Math. Phys., \rm (2016), {\bf 67}:116, 30 p. DOI: 10.1007/s00033-016-0696-1}
%%% ----------------------------------------------------------------------

\begin{abstract}
The purpose of this paper is to obtain existence and uniqueness results in weighted Sobolev spaces for transmission problems for the {non-linear} Darcy-Forchheimer-Brinkman system and the {linear} Stokes system in two complementary Lipschitz domains in ${\mathbb R}^3$, one of them is a bounded Lipschitz domain $\Omega $ with connected boundary, and the other one is the exterior Lipschitz domain ${\mathbb R}^3\setminus \overline{\Omega }$. We exploit a layer potential method for the Stokes and Brinkman systems combined with a fixed point theorem in order to show the desired existence and uniqueness results, whenever the given data are suitably small in some weighted Sobolev spaces and boundary Sobolev spaces.
\end{abstract}

%%% ----------------------------------------------------------------------
\maketitle
%%% ----------------------------------------------------------------------
%\tableofcontents

\section{Introduction}

Let $\alpha >0$ be a given constant. Then the following equations
\begin{equation}
\label{matrix-pseudo-diff1}
{\boldsymbol{\mathcal L}}_{\alpha }({\bf u},\pi ):=
\left(\triangle -\alpha {\mathbb I}\right){\bf u}-\nabla \pi ={\bf f},\ \
{\rm{div}}\ {\bf u}=0
\end{equation}
determine the {normalized} {\it Brinkman system}.
For $\alpha =0$, the {system} {\eqref{matrix-pseudo-diff1} gives the {normalized} {\it Stokes system},
\begin{equation}
\label{Stokes}
\begin{array}{lll}
{\boldsymbol{\mathcal L}}_{0}({\bf u},\pi ):=\triangle {\bf u} -\nabla \pi={\bf f},\ {\rm{div}}\ {\bf u}=0.
\end{array}
\end{equation}
{Both PDE systems \eqref{matrix-pseudo-diff1} and \eqref{Stokes} are linear.}

The {normalized} {\it Darcy-Forchheimer-Brinkman system}
\begin{equation}
\label{D-B-F-1}
\begin{array}{lll}
\triangle {\bf u}-\alpha {\bf u}-k|{\bf u}|{\bf u}-\beta ({\bf u}\cdot \nabla ){\bf u}-\nabla \pi ={\bf f},\ {\rm{div}}\ {\bf u}=0,
\end{array}
\end{equation}
{is nonlinear and} describes flows through porous media saturated with viscous incompressible fluids, where the inertia of such a fluid is not negligible. The constants $\alpha ,k,\beta >0$ are determined by the physical properties of such a porous medium (for further details we refer the reader to the book
\cite[p. 17]{Ni-Be} and the references therein).
When {$\alpha=k=0$}, \eqref{D-B-F-1} becomes the {normalized} {\it Navier-Stokes system}
\begin{equation}
\label{N-S}
\begin{array}{lll}
\triangle {\bf u} -{\beta}({\bf u}\cdot \nabla ){\bf u}-\nabla \pi={\bf f},\ {\rm{div}}\ {\bf u}=0.
\end{array}
\end{equation}
In this paper we will mainly deal with the first three systems.}

\label{Intro} \setcounter{equation}{0} The layer potential method
has a well known role in the study of elliptic boundary value problems (see, e.g.,
\cite{Ch-Mi-Na,Co,Fa-Ke-Ve,H-W,12,Med,Med1,M-M-Q,M-W,24,Varnhorn}).
Lang and Mendez \cite{Lang-Mendez} have used a layer potential technique to obtain optimal solvability results in weighted Sobolev spaces for the Poisson problem for the Laplace equation with Dirichlet or Neumann boundary conditions in an exterior Lipschitz domain. Escauriaza and Mitrea \cite{Es-Mi} have analyzed the transmission problems of the Laplace operator on Lipschitz domains in ${\mathbb R}^n$ by exploiting a layer potential method (see also \cite{E-F-V, E-K}). Mitrea and Wright \cite{M-W} have used the integral layer potentials in the analysis of the main boundary value problems for the Stokes system in arbitrary Lipschitz domains in ${\mathbb R}^n$ ($n\geq 2$). The authors in \cite{10-new2} have defined the notion of pseudodifferential Brinkman operator on compact Riemannian manifolds as a differentiable matrix type operator with variable coefficients that extends the notion of differential Brinkman operator from the Euclidean setting
to such manifolds, and have obtained well-posedness results for related transmission problems. The
Dirichlet problem for the Navier-Stokes system in bounded or exterior Lipschitz domains has been intensively studied, starting with the valuable work of Leray \cite{Leray} (see also
\cite{Galdi,Shor}). Dindo\u{s} and Mitrea \cite{D-M} have used the integral layer potentials to show well-posedness results in Sobolev and Besov spaces of Poisson problems for the Stokes and Navier-Stokes systems with Dirichlet condition on $C^1$ and Lipschitz domains in compact Riemannian manifolds (see also \cite{M-T}). Choe and Kim \cite{Choe-Kim} proved existence and uniqueness of the Dirichlet problem for the Navier-Stokes system on bounded Lipschitz domains in ${\mathbb R}^3$ with connected boundaries. Russo and Tartaglione \cite{Russo-Tartaglione-2} studied the Robin problem of the Stokes and Navier-Stokes systems in bounded or exterior Lipschitz domains, by exploiting a double-layer potential method (see also \cite{A-R,Choe-Kim}). Amrouche and Nguyen \cite{Amrouche-1} obtained existence and uniqueness results in weighted Sobolev spaces for the Poisson problem for the Navier-Stokes system in exterior Lipschitz domains in ${\mathbb R}^3$ with Dirichlet boundary condition, by using an approach based on a combination of properties of Oseen problems in ${\mathbb R}^3$ and in exterior domains of ${\mathbb R}^3$ (see also \cite{Amrouche-2,Am-Se}). Razafison \cite{Ra} obtained existence and uniqueness results for the three-dimensional exterior Dirichlet problem for the Navier-Stokes equations in anisotropic weighted $L^q$ spaces, $q\in (1,\infty )$. The authors in \cite{Ch-Mi-Na-3} studied direct segregated systems of boundary-domain integral equations for mixed (Dirichlet-Neumann) boundary value problems for a scalar second-order divergent elliptic partial differential equation with a variable coefficient in an exterior domain in ${\mathbb R}^3$. The boundary-domain integral equation system equivalence to the original boundary value problems and the Fredholm properties and invertibility of the corresponding boundary-domain integral operators have been analyzed in weighted Sobolev spaces. The authors in \cite{Ch-Mi-Na-1} used localized direct segregated boundary-domain integral equations for variable coefficient transmission problems with interface crack for scalar second order elliptic partial differential equations in a bounded composite domain consisting of adjacent anisotropic subdomains having a common interface surface. Segregated direct boundary-domain integral equation (BDIE) systems associated with mixed, Dirichlet and Neumann boundary value problems (BVPs) for a scalar partial differential equation with variable coefficients of the Laplace type for domains with interior cuts (cracks) have been investigated in \cite{Ch-Mi-Na-2}. The equivalence of BDIE's to such boundary value problems and the invertibility of the BDIE operators in the $L^2$-based Sobolev spaces have been also established. The author in \cite{Mikh-1,Mikh-2} used direct localized boundary-domain integro-differential formulations in the study of boundary-value problems in nonlinear elasticity and nonlinear boundary value problems with variable coefficients.

The authors in \cite{K-L-W} combined the integral layer potentials of the Stokes and Brinkman systems with a fixed point theorem to show the existence result for a nonlinear Neumann-transmission problem for the Stokes and Brinkman systems in two adjacent bounded Lipschitz domains with data in $L^p$ spaces, Sobolev spaces, and also in Besov spaces (see also \cite{K-L-W2,K-L-W3,K-L-W1,K-M-W4}). Dindo\u{s} and Mitrea \cite{D-M-new} used results in the linear theory for the Poisson problem of the Laplace operator in Sobolev and Besov spaces on Lipschitz domains and studied Dirichlet or Neumann problems for semilinear Poisson equations on Lipschitz domains in compact Riemannian manifolds.

The purpose of this paper is to obtain existence and uniqueness results in $L^2$-weighted Sobolev spaces for transmission problems for the Darcy-Forchheimer-Brinkman and Stokes systems
in two complementary Lipschitz domains in ${\mathbb R}^3$, one of them is a bounded Lipschitz domain $\Omega $ with connected boundary, and the other one is the complementary Lipschitz domain ${\mathbb R}^3\setminus \overline{\Omega }$. We exploit a layer potential method for the Stokes and Brinkman systems combined with a fixed point theorem in order to show the desired existence and uniqueness results, whenever the given data are suitably small in some $L^2$-based Sobolev spaces. The outline of the paper is the following. In the next section we introduce the $L^2$-weighted Sobolev spaces on an exterior Lipschitz domain in ${\mathbb R}^3$, where the existence and uniqueness results for the nonlinear transmission problem have been obtained. {In the third section we introduce the Newtonian and layer potential operators for the Stokes and Brinkman systems}. The fourth section presents a linear Poisson problem of transmission type for the Stokes and Brinkman systems in two complementary Lipschitz domains in ${\mathbb R}^3$ with data in weighted Sobolev spaces and boundary Sobolev spaces. Theorem \ref{well-posedness-Poisson-Besov-Dirichlet} is devoted to the well-posedness of such a problem and has been obtained by using an integral layer potential method. In the last section, we exploit the well-posedness result obtained for the linear problem together with a fixed point theorem in order to show the existence and uniqueness of the solution of a boundary value problem of transmission type for the Stokes and Darcy-Forchheimer-Brinkman systems in two complementary Lipschitz domains in ${\mathbb R}^3$ with data in weighted Sobolev spaces and boundary Sobolev spaces. In the Appendix we obtain various mapping properties of the Newtonian and layer potential operators for the Stokes and Brinkman systems in weighted or standard Sobolev spaces on exterior or bounded Lipschitz domains in ${\mathbb R}^3$.

Transmission problems coupling the Stokes and Darcy-Forchheimer-Brinkman systems appear as the mathematical model in various practical problems such as environmental problems with free air flow interacting with evaporation from soils (see, e.g., \cite{Ba-Mo-Fl,Ja-Ry-He-Gr}). An important application in medicine is the transvascular exchange between blood flow in vessels and the surrounding tissue as porous material and also in capillaries and tissue (see \cite{Ba}).

\section{Preliminary results}
\setcounter{equation}{0}

Let $\Omega _{+}\subset {\mathbb R}^3$ be a bounded Lipschitz domain, i.e., an open connected set whose boundary $\partial {\Omega }$ is locally the graph of a Lipschitz function. Let $\Omega _{-}:={\mathbb R}^3\setminus \overline{{\Omega }}_{+}$ denote the exterior Lipschitz domain.

\subsection{\bf Standard Sobolev spaces and related results}
Let $\Omega'$ be $\Omega_+$, $\Omega_-$ or $\mathbb R^3$. We denote by ${\mathcal E}(\Omega'):=C^{\infty }(\Omega')$ the space of infinitely differentiable functions and by ${\mathcal D}(\Omega '):=C^{\infty }_{{\rm{comp}}}(\Omega ')$ the space of infinitely differentiable functions with compact support in $\Omega '$, equipped with the inductive limit topology. Also, ${\mathcal E}'(\Omega ')$ and ${\mathcal D}'(\Omega ')$ denote the corresponding spaces of distributions on $\Omega$, i.e., the duals of ${\mathcal D}(\Omega ')$ and ${\mathcal E}(\Omega ')$, respectively.

{Let ${\mathcal F}$ and ${\mathcal F}^{-1}$ be the Fourier
transform and its inverse defined on the
$L^1(\mathbb R^3)$ functions as
$$
\hat g({\boldsymbol\xi})=[\mathcal F g]({\boldsymbol\xi}):=
\int_{\mathbb R^3}e^{-2\pi i \mathbf x\cdot\boldsymbol\xi}g({\mathbf x})d{\mathbf x},\quad
g({\mathbf x})=[\mathcal F^{-1} \hat g]({\bf x}):=
\int_{\mathbb R^3}e^{2\pi i \mathbf x\cdot\boldsymbol\xi}\hat g({\boldsymbol\xi})d{\boldsymbol\xi},
$$
and generalised to
the space of tempered distributions}. Note that $L^2({\mathbb R}^3)$ is the Lebesgue space of (equivalence classes of) measurable, square integrable functions on ${\mathbb R}^3$, and $L^{\infty }({\mathbb R}^3)$ is the space of (equivalence
classes of) essentially bounded measurable functions on ${\mathbb R}^3$.
For $s\in {\mathbb R}$, let us consider the $L^2$-based Sobolev (Bessel potential) spaces
\begin{align}
\label{bessel-potential}
&H^s({\mathbb R}^3):=\big\{(I-\triangle )^{-\frac{s}{2}}f: f\in L^2({\mathbb R}^3)\big\}
=\big\{{\mathcal F}^{-1}(1+|{\boldsymbol\xi} |^2)^{-\frac{s}{2}}{\mathcal F}f:f\in L^2({\mathbb R}^3)\big\},\\
&H^s({\mathbb R}^3)^3:=\{f=(f_1,f_2,f_3):f_j\in H^s({\mathbb R}^3),\ j=1,2,3\}, \label{bessel-potential2}\\
\label{spaces-Sobolev-inverse}
&H^s({\Omega }):=\{f\in {\mathcal D}'(\Omega ):\exists \ F\in H^s({\mathbb R}^3)
\mbox{ such that } F|_{\Omega }=f\}.
\end{align}
The space $\widetilde{H}^s({\Omega '})$ is the closure of ${\mathcal D}(\Omega ')$ in $H^1({\mathbb R}^3)$. This space can be also characterized as
\begin{align}
&\widetilde{H}^s({\Omega '}):=\{f\in H^s({\mathbb R}^3):{\rm{supp}}\
f\subseteq \overline{\Omega '}\}.
\end{align}
The vector-functions (distributions) belong to $H^s({\Omega '})^3$ and $\widetilde{H}^s({\Omega '})^3$ if their components belong to the corresponding scalar spaces {$H^s({\Omega '})$ and $\widetilde{H}^s({\Omega '})$}, as in \eqref{bessel-potential2} (see, e.g., \cite{Lean}). For ${s\ge0}$, the spaces (\ref{bessel-potential}) coincide with Sobolev-Slobodetskii spaces $W^{s,2}({\mathbb R}^3)$ (see, e.g., \cite[Chapter 4]{H-W}). In particular, such spaces are Sobolev spaces if $s$ is an {integer.
%{ For $s<0$, we use also the notation $H^s({\mathbb R}^3)$ and $H^s(\Omega )$ for the spaces $W^{s,2}({\mathbb R}^3)$ and $W^{s,2}(\Omega )$, respectively.}
For} $s>-\frac{1}{2}$ such that $s-\frac{1}{2}$ is non-integer, the Sobolev
space $\widetilde{H}^s({\Omega '})$ can be identified with the closure $\mathring{H}^{s}(\Omega )$ of
${\mathcal D}(\Omega ')$ in the norm of ${H}^s({\Omega '})$ (see, e.g., \cite{Fa-Me-Mi}, \cite[Theorem 3.33]{Lean}). The spaces ${H}^s({\Omega '})$ and {$\mathring{H}^s({\Omega '})$ coincide for
$s\le\frac{1}{2}$, see \cite[Theorem 2.12]{Mikh}.} For any $s\in {\mathbb R}$, ${\mathcal D}(\overline{\Omega '})$ is dense in ${H}^s({\Omega '})$ and the following duality relations hold (see \cite[Proposition 2.9]{J-K1}, \cite[(1.9)]{Fa-Me-Mi}, \cite[(4.14)]{M-T2})
\begin{equation}
\label{duality-spaces} \left({H}^s({\Omega '})\right)'=\widetilde{H}^{-s}({\Omega '}),\ \ {H}^{-s}({\Omega '})=\big(\widetilde{H}^s({\Omega '})\big)'.
\end{equation}

For $s\in [0,1]$, the {Sobolev space ${H}^s(\partial\Omega )$ on the boundary $\partial\Omega$} can be defined by using the space
${H}^s({\mathbb R}^{2})$, a partition of unity and the pull-backs of the local parametrization of $\partial \Omega $. In addition, we note that ${H}^{-s}(\partial\Omega  )=\left({H}^s(\partial\Omega  )\right)'.$ All the above spaces are Hilbert spaces. For further properties of Sobolev spaces on bounded Lipschitz domains and Lipschitz boundaries, we refer to \cite{Gris,J-K1,Lean,M-T2,M-W,Triebel}.

\comment{We denote by $d\sigma $ the surface measure on $\partial\Omega  $, and by $\nu $ the outward unit normal to $\Omega_{+}$, which is defined $d\sigma$-a.e. on $\partial\Omega  $. Let $\kappa =\kappa (\partial\Omega  )>1$ be sufficiently large.
Then we consider the non-tangential approach regions in $\Omega _{\pm }$ with vertex at the point ${\bf x}\in \partial\Omega  $
\begin{equation}
\label{non-tangential} \Gamma_{\pm }({\bf x}):=\{{\bf y}\in
\Omega _{\pm }:{\rm{dist}}({\bf x},{\bf y})<\kappa \
{\rm{dist}}\ ({\bf y},\partial\Omega  )\}.
\end{equation}
Given $f:\Omega _{\pm }\to {\mathbb R}$, the non-tangential boundary trace $\gamma_{\pm }f$ of
$f$ is defined a.e. on ${\partial\Omega  }$ as
\begin{align}
\label{non-tangential-1-trace}
&(\gamma_{\pm }f)({\bf x}):=\lim _{\Gamma _{\pm}({\bf x})\ni {\bf y}\to {\bf x}}f({\bf y}),\ \mbox{ a.e. } {\bf x}\in {\partial\Omega  }.
\end{align}
Note that if $f\in C^{\infty }(\overline{\Omega }_{\pm})$, then $\gamma_{\pm }f$ is the usual restriction of $f$ to $\partial\Omega  $, i.e.,
\begin{align}
\label{trace-G-M}
&\gamma_{\pm }:C^{\infty }(\overline{\Omega }_{\pm})\to C^0({\partial\Omega  }),\ \ \gamma_{\pm }f=f|_{{_{\partial\Omega  }}}.
\end{align}
}
A useful result for the problems we are going to investigate is the following trace lemma (see \cite{Co}, \cite[Proposition 3.3]{J-K1}, \cite[Theorem 2.3, Lemma 2.6]{Mikh}, \cite{Mikh-3}, \cite[Theorem 2.5.2]{M-W}):
\begin{lem}
\label{trace-operator1} Assume that ${\Omega }:=\Omega _{+}\subset {\mathbb R}^3$
is a bounded Lipschitz domain with connected boundary $\partial \Omega $ and denote by $\Omega _{-}:={\mathbb R}^3\setminus \overline{\Omega }$ the corresponding exterior domain. Then there {exist linear and continuous trace operators}\footnote{The trace operator defined on Sobolev spaces of vector fields on the domain ${\Omega }_{\pm }$ is also denoted by $\gamma_{\pm }$.} {$\gamma_{\pm }:{H}^{1}({\Omega }_{\pm })\to H^{\frac{1}{2}}({\partial\Omega  })$ such that $\gamma_{\pm }f=f|_{{{\partial\Omega }}}$ for any $f\in C^{\infty }(\overline{\Omega }_{\pm })$.}
{These operators are surjective and have $($non-unique$)$ linear and continuous right inverse operators} $\gamma^{-1}_{\pm }:H^{\frac{1}{2}}({\partial\Omega })\to {H}^{1}({\Omega }_{\pm }).$
\end{lem}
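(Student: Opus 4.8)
Here is a plan for proving the trace lemma.

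\medskip

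\noindent\textbf{Plan of proof.} The statement is classical, so the strategy is to reduce it to the two standard building blocks: (i) the existence of a bounded trace operator $H^1(\mathbb R^3)\to H^{1/2}(\mathbb R^2)$ on the flat half-space, and (ii) a localization/flattening argument using the Lipschitz charts that define $\partial\Omega$. First I would fix a finite atlas: cover $\partial\Omega$ by bounded open sets $U_1,\dots,U_N$ such that, in suitable rotated coordinates, $U_j\cap\Omega_+$ is the region below the graph of a Lipschitz function $\varphi_j$ and $U_j\cap\partial\Omega$ is that graph; then take a subordinate smooth partition of unity $\{\theta_j\}$ with $\sum_j\theta_j\equiv 1$ near $\partial\Omega$, together with one extra cutoff $\theta_0$ supported away from $\partial\Omega$. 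For $f\in H^1(\Omega_+)$, write $f=\sum_{j=0}^N \theta_j f$; the $\theta_0 f$ piece plays no role on the boundary, and each $\theta_j f$ ($j\ge 1$) is supported in $U_j$, so after the bi-Lipschitz change of variables $\Phi_j$ that flattens $U_j\cap\partial\Omega$ it becomes an $H^1$ function on a piece of the flat half-space $\mathbb R^3_-$. On the half-space one has the elementary trace bound $\|g|_{x_3=0}\|_{H^{1/2}(\mathbb R^2)}\le C\|g\|_{H^1(\mathbb R^3_-)}$, e.g. via the Fourier-transform characterization $\widehat{g|_{x_3=0}}(\xi')=\int \widehat g(\xi',x_3)\,dx_3$ estimated by Cauchy--Schwarz against $(1+|\xi'|^2+x_3^2)^{-1}$. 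Pulling back and summing over $j$ gives a bounded operator $\gamma_+:H^1(\Omega_+)\to H^{1/2}(\partial\Omega)$; since for $f\in C^\infty(\overline{\Omega_+})$ each localized flattened piece is continuous up to the boundary and the flat trace is literal restriction, $\gamma_+ f=f|_{\partial\Omega}$ in that case, and by density of $C^\infty(\overline{\Omega_+})$ in $H^1(\Omega_+)$ this extends uniquely. The exterior operator $\gamma_-$ is handled in exactly the same way, working with $U_j\cap\Omega_-$ (the region above the graph); only local charts enter, so unboundedness of $\Omega_-$ is irrelevant.

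\medskip

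\noindent\textbf{Surjectivity and right inverse.} For the extension operator I would again localize: given $h\in H^{1/2}(\partial\Omega)$, write $h=\sum_j \theta_j h$, transport each $\theta_j h$ to $H^{1/2}(\mathbb R^2)$ via $\Phi_j$, and apply the standard half-space extension $E:H^{1/2}(\mathbb R^2)\to H^1(\mathbb R^3_-)$ (for instance the harmonic/Poisson extension, or the Fourier multiplier $\hat g(\xi')\mapsto \hat g(\xi')e^{-|x_3|\langle\xi'\rangle}$, which satisfies $\|Eg\|_{H^1}\le C\|g\|_{H^{1/2}}$ and $(Eg)|_{x_3=0}=g$). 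Pulling these back by $\Phi_j^{-1}$, multiplying by a further cutoff equal to $1$ on $\mathrm{supp}\,\theta_j$, and summing produces $\gamma_+^{-1}h\in H^1(\Omega_+)$ with $\gamma_+(\gamma_+^{-1}h)=\sum_j\theta_j h=h$ and $\|\gamma_+^{-1}h\|_{H^1(\Omega_+)}\le C\|h\|_{H^{1/2}(\partial\Omega)}$; linearity and continuity are immediate, and non-uniqueness is clear since any $H^1_0$ function may be added. The same construction in the exterior charts gives $\gamma_-^{-1}$.

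\medskip

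\noindent\textbf{Main obstacle.} The only genuinely delicate point is that the charts $\Phi_j$ are merely bi-Lipschitz, not smooth, so one must check that composition with a bi-Lipschitz homeomorphism preserves the spaces $H^1$ (clear, since $H^1$ is defined by first-order derivatives and the chain rule holds a.e. for Lipschitz maps) and $H^{1/2}$ on the interface (this is the classical fact that $H^{1/2}(\mathbb R^2)$ is bi-Lipschitz invariant, provable from the Gagliardo--Slobodetskii double-integral seminorm and a change of variables with the Jacobian bounds above and below). Granting these invariances — which are exactly the results already quoted from \cite{Co,J-K1,Mikh,M-W} — the rest is the routine partition-of-unity bookkeeping sketched above. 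I therefore expect the write-up to consist mainly of citing the half-space trace and extension theorems and the bi-Lipschitz invariance of $H^{1/2}(\partial\Omega)$, and assembling them through the atlas.
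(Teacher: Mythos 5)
The paper does not prove this lemma at all: it is quoted as a known result with references to Costabel, Jerison--Kenig, Mikhailov and Mitrea--Wright, where exactly the localization--flattening--half-space argument you sketch is carried out. Your proposal is correct and is essentially the standard proof underlying those citations (including the right observation that only local charts matter for $\Omega_-$ and that bi-Lipschitz invariance of $H^{1/2}$ is the only delicate point, which is in any case built into the paper's chart-based definition of $H^{1/2}(\partial\Omega)$), so there is nothing to compare beyond noting that the paper simply cites the literature.
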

{Note that the trace operator can be also defined through the non-tangential boundary trace as in, e.g., \cite[Section 2.3]{M-W}}.

\subsection{\bf Weighted Sobolev spaces and related results}\label{S2.2}
We now consider the weight function $\rho ({\bf x}):=\left(1+|{\bf x}|^2\right)^{\frac{1}{2}}$, ${\bf x}=(x_1,x_2,x_3)\in {\mathbb R}^3$.
%Then for $p\in [1,\infty )$, we define the weighted space {$L^p(\rho^{-1};{\Omega }_{-})$} as
{Then we define the weighted space {$L^2(\rho^{-1};{\Omega }_{-})$} as}
\begin{align}
\label{Lp-weight}
f\in {L^{2}(\rho^{-1};{\Omega }_{-})} \Longleftrightarrow {\rho }^{-1}f\in L^{2}(\Omega _{-}).
\end{align}
%and note that
%\begin{align}
%\label{Lp-weight-1}
%{\bl $L^p(\rho^{-1};{\Omega }_{-})=L^p_{\rho ^{-p}dx}(\Omega _{-}),$}
%\end{align}
%where $L^p_{\rho ^{-p}\, dx}(\Omega _{-})$ is the $L^p$-space with the Borel measure $\rho ^{-p}\, dx$.
We also define the weighted Sobolev space
\begin{equation}
\label{weight-1}
{\mathcal H}^{1}(\Omega _{-}):=\left\{f\in {\mathcal D}'(\Omega _{-}):{\rho }^{-1}f\in L^2(\Omega _{-}),\ \nabla f\in L^2(\Omega _{-})^3\right\},
\end{equation}
which is a Hilbert space with the norm
\begin{equation}
\label{weight-2}
\|f\|_{{\mathcal H}^{1}(\Omega _{-})}:=\left(\left\|{\rho }^{-1}f\right\|_{L^2(\Omega _{-})}^2+\|\nabla f\|_{L^2(\Omega _{-})^3}^2\right)^{\frac{1}{2}}
\end{equation}
(cf. \cite{Ha}; see also \cite{Amrouche-1}, \cite{Do-Li}). In addition, we consider the space $\mathring{\mathcal H}^{1}(\Omega _{-})\subset {\mathcal H}^{1}(\Omega_-)$, which is the closure of ${\mathcal D}({\Omega  }_{-})$ in ${\mathcal H}^{1}(\Omega _{-})$, and the space
$\widetilde{\mathcal H}^{1}(\Omega _{-})\subset {\mathcal H}^{1}(\mathbb R^3)$,
which is the closure of ${\mathcal D}({\Omega  }_{-})$ in ${\mathcal H}^{1}(\mathbb R^3)$.
Note that $\mathring{\mathcal H}^{1}(\Omega _{-})$ coincides with the space $\big\{v\in {{\mathcal H}^{1}}(\Omega _{-}):\gamma_{-}v=0 \mbox{ on } \partial \Omega \big\}$ (see, e .g., \cite[p. 44]{Amrouche-1}), and 
\begin{equation}
\label{property}
\widetilde{\mathcal H}^{1}(\Omega _{-})=\{u\in {\mathcal H}^{1}(\mathbb R^3):{\rm{supp}}\ {u}\subseteq \overline{\Omega }_{-}\} 
\end{equation}
(cf., e.g. \cite[Theorem 3.29(ii)]{Lean}). Note that $\gamma_{-}$ is the trace operator defined in Lemma \ref{trace-operator1}.
Although, the domain of the functions of $\widetilde{\mathcal H}^{1}(\Omega _{-})$ is $\mathbb R^3$, while the domain of the functions from
$\mathring{\mathcal H}^{1}(\Omega _{-})$ is $\Omega _{-}$, we will often identify such spaces (cf., e.g., \cite[Theorem 3.33]{Lean}), when this does not lead to any confusion. Also the seminorm
\begin{align}
|g|_{{\mathcal H}^{1}(\Omega _{-})}:=\|\nabla g\|_{L^2(\Omega _{-})^3}
\nonumber
\end{align}
is equivalent to {the norm} (\ref{weight-2}) in ${\mathcal H}^{1}(\Omega _{-})$ (see, e.g., \cite{Do-Li}, \cite[(2.3)]{Ch-Mi-Na-3}). We also introduce the spaces
\begin{equation}
\label{weight-3}
{\mathcal H}^{-1}(\Omega _{-})=\Big(\widetilde{\mathcal H}^{1}(\Omega _{-})\Big)',\ \
\widetilde{\mathcal H}^{-1}(\Omega _{-})=\left({\mathcal H}^{1}(\Omega _{-})\right)'.
\end{equation}
When $\Omega _{-}={\mathbb R}^3$, we have $\widetilde{\mathcal H}^{-1}({\mathbb R}^3)={\mathcal H}^{-1}({\mathbb R}^3)$.

We note that ${\mathcal D}(\Omega _{-})$ is dense in $\mathring{\mathcal H}^{1}(\Omega _{-})$ by definition, and also in $\widetilde{\mathcal H}^{1}(\Omega _{-})$ (see, e.g., \cite{Ch-Mi-Na-3}).
The space $\mathcal D(\overline{\Omega}_-)$ is dense in $\mathcal H^1(\Omega_-)$ (see, e.g., \cite[p.136]{Am-Ra}). The weighted Sobolev spaces of vector-valued functions ${\mathcal H}^{\pm 1}(\Omega _{-})^3$ and $\widetilde{\mathcal H}^{\pm 1}(\Omega _{-})^3$ can be defined similarly. Moreover, due to the equivalence of the norm and seminorm in ${\mathcal H}^{1}(\Omega _{-})$, by the Sobolev inequality \cite[Theorem 4.31]{Adams2003} we have the embedding
\begin{equation}
\label{weight-4}
{\mathcal H}^{1}(\Omega _{-})\hookrightarrow L^6(\Omega _{-}).
\end{equation}

{We} can also define the {\it exterior trace operator on the weighted Sobolev space}
${\mathcal H}^{1}({\Omega }_{-})$,\\
$\gamma_{-}:{\mathcal H}^{1}({\Omega }_{-})\to H^{\frac{1}{2}}({\partial\Omega }).$ This operator is surjective (see \cite[Proposition 2.4]{Sa-Se}).
%For the sake of brevity we have used the same notation for both trace operators defined on the
%weighted Sobolev space ${\mathcal H}^{1}({\Omega }_{-})$ and on standard Sobolev space $H^{1}({\Omega }_{-})$, respectively, but their meaning will be understood from the context.
{Since $H^{1}(\Omega _{-})$ is continuously embedded in ${\mathcal H}^{1}(\Omega _{-})$, Lemma~\ref{trace-operator1} implies} the following {assertion} (cf. \cite[Lemma 5.3]{Lang-Mendez}, \cite[Theorem 2.3, Lemma 2.6]{Mikh}).
\begin{lem}
\label{trace-operator1-S}
Let ${\Omega }\subset {\mathbb R}^3$ be a bounded Lipschitz domain with connected boundary $\partial \Omega $. {Then there exists a $($non-unique$)$ linear and continuous right inverse $\gamma^{-1}_-: H^{\frac{1}{2}}({\partial\Omega })\to \mathcal{H}^{1}({\Omega }_-)$ to the trace operator $\gamma_{-}:{\mathcal H}^{1}({\Omega }_{-})\to H^{\frac{1}{2}}({\partial\Omega })$.}
\end{lem}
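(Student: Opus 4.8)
The plan is to reduce the statement to Lemma~\ref{trace-operator1} by using the continuous embedding $H^{1}(\Omega_-)\hookrightarrow{\mathcal H}^{1}(\Omega_-)$ that is mentioned just before the statement. First I would record this embedding explicitly. Since $\rho^{-1}({\bf x})=\left(1+|{\bf x}|^2\right)^{-1/2}\le 1$ for every ${\bf x}\in{\mathbb R}^3$, any $f\in H^{1}(\Omega_-)$ satisfies $\|\rho^{-1}f\|_{L^2(\Omega_-)}\le\|f\|_{L^2(\Omega_-)}$, while $\|\nabla f\|_{L^2(\Omega_-)^3}$ occurs in both norms; hence $\|f\|_{{\mathcal H}^{1}(\Omega_-)}\le\|f\|_{H^{1}(\Omega_-)}$, and the inclusion $\iota:H^{1}(\Omega_-)\to{\mathcal H}^{1}(\Omega_-)$ is linear and continuous.

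The second step is to check that the exterior trace operator $\gamma_-:{\mathcal H}^{1}(\Omega_-)\to H^{\frac{1}{2}}(\partial\Omega)$ is compatible with the trace operator of Lemma~\ref{trace-operator1}, i.e.\ that $\gamma_-\circ\iota$ coincides with the operator $\gamma_-:H^{1}(\Omega_-)\to H^{\frac{1}{2}}(\partial\Omega)$ furnished there. Both operators are linear and continuous on $H^{1}(\Omega_-)$ (for $\gamma_-\circ\iota$ this uses Step~1), and both map $\varphi\in{\mathcal D}(\overline{\Omega}_-)$ to $\varphi|_{\partial\Omega}$; since ${\mathcal D}(\overline{\Omega}_-)$ is dense in $H^{1}(\Omega_-)$, the two operators must agree on all of $H^{1}(\Omega_-)$.

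Finally I would take a (non-unique) linear continuous right inverse $\gamma^{-1}_-:H^{\frac{1}{2}}(\partial\Omega)\to H^{1}(\Omega_-)$ of the exterior trace operator provided by Lemma~\ref{trace-operator1}, and set $\widetilde{\gamma}^{-1}_-:=\iota\circ\gamma^{-1}_-:H^{\frac{1}{2}}(\partial\Omega)\to{\mathcal H}^{1}(\Omega_-)$. This is linear and continuous as a composition of linear continuous maps, and for every $g\in H^{\frac{1}{2}}(\partial\Omega)$ one has $\gamma_-\bigl(\widetilde{\gamma}^{-1}_-g\bigr)=(\gamma_-\circ\iota)\bigl(\gamma^{-1}_-g\bigr)=\gamma_-\bigl(\gamma^{-1}_-g\bigr)=g$ by Step~2 and the right-inverse property; thus $\widetilde{\gamma}^{-1}_-$ is the asserted right inverse, and its non-uniqueness is inherited from that of $\gamma^{-1}_-$. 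There is no serious obstacle in this argument: the only point deserving a word of care is the compatibility of the two trace operators in Step~2, and that follows at once from the density of ${\mathcal D}(\overline{\Omega}_-)$ in $H^{1}(\Omega_-)$ together with the continuity of the inclusion $\iota$. (Alternatively, since all spaces involved are Hilbert spaces and $\gamma_-:{\mathcal H}^{1}(\Omega_-)\to H^{\frac{1}{2}}(\partial\Omega)$ is surjective, one could produce a bounded right inverse abstractly via $\gamma_-^{*}(\gamma_-\gamma_-^{*})^{-1}$; but the reduction to Lemma~\ref{trace-operator1} is more elementary and is the route I would present.)
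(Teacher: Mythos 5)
Your proposal is correct and follows essentially the same route as the paper, which notes that the continuous embedding $H^{1}(\Omega_-)\hookrightarrow{\mathcal H}^{1}(\Omega_-)$ (immediate from $\rho^{-1}\le 1$) combined with the right inverse of Lemma~\ref{trace-operator1} yields the assertion. Your extra verification that the two trace operators agree on $H^{1}(\Omega_-)$ via density of ${\mathcal D}(\overline{\Omega}_-)$ is a reasonable point of care but does not change the argument.
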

\comment{
\begin{proof}
In view of \cite[Theorem 2.3, Lemma 2.6]{Mikh} there exists a linear and continuous operator\\ ${\gamma }:H^{1}({\mathbb R}^3)\to H^{\frac{1}{2}}({\partial\Omega  })$ such that ${\gamma }f=f|_{{{\partial\Omega }}}$ for any $f\in C^{\infty }({\mathbb R}^3)$. In addition, there exists a linear bounded extension operator $Z:H^\frac{1}{2}({\partial\Omega })\to H^{1}({\mathbb R}^3)$, which is a right inverse to the trace operator ${\gamma }$, i.e., ${\gamma }(Z\phi )=\phi $ for any $\phi \in H^{\frac{1}{2}}({\partial\Omega })$.
Since the restriction operator is continuous from $H^1({\mathbb R}^3)$ to ${\mathcal H}^{1}(\Omega _{-})$ and $H^{1}(\Omega _{-})$ is continuously embedded into ${\mathcal H}^{1}(\Omega _{-})$, the
operator $Z:H^\frac{1}{2}({\partial\Omega })\to {\mathcal H}^{1}(\Omega _{-})$ is continuous as well, and the statement follows.
\end{proof}
}

%\subsection{\bf {The behavior at infinity of functions in the space ${\mathcal H}^{1}(\Omega _{-})$}}

The following definition specifies in which sense the conditions at infinity, associated with the transmission problems studied in this paper, are satisfied (cf. \cite[Definition 2.2, Remark 2.3]{Am-Ra}.
\begin{defn}
\label{behavior-infinity-sm}
A function $u$ {\it tends to a constant $u_{\infty }$ at infinity {in the sense of Leray}} if
\begin{align}
\label{behavior-infty-sm}
\lim_{r\to \infty }\int _{S^2}|u(r{\bf y})-u_{\infty }|d\sigma _{\bf y}=0.
\end{align}
\end{defn}
\begin{cor}
%\label{remark-infty}
{If $u\in {\mathcal H}^{1}(\Omega _{-})$ then {$u$ tends to zero at infinity in the sense of Leray, i.e.,}}
\begin{align}
\label{behavior-infty-sm-ms}
\lim_{r\to \infty }\int _{S^2}|u(r{\bf y})|d\sigma _{\bf y}=0.
\end{align}
\end{cor}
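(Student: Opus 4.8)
The plan is to deduce the Leray-type decay \eqref{behavior-infty-sm-ms} from the weighted integrability and gradient bounds that define $\mathcal H^1(\Omega_-)$, by reducing to the known statement for $H^1$ on a thin spherical shell. First I would note that it suffices to prove the claim for scalar $u\in\mathcal H^1(\Omega_-)$, and, after extending $u$ by (say) a continuous bounded extension into a neighbourhood of $\overline\Omega_+$ — or simply by restricting attention to $|\mathbf x|>R_0$ large enough that $\{|\mathbf x|>R_0\}\subset\Omega_-$ — to work on the exterior of a large ball where $u\in H^1_{\mathrm{loc}}$ with $\rho^{-1}u\in L^2$ and $\nabla u\in L^2$. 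By density (recall $\mathcal D(\overline\Omega_-)$ is dense in $\mathcal H^1(\Omega_-)$, as quoted after \eqref{weight-3}), it is enough to establish an estimate of the form
\begin{equation}
\label{leray-aux}
\int_{S^2}|u(r\mathbf y)|\,d\sigma_{\mathbf y}\le C\,\varepsilon(r),\qquad \varepsilon(r)\to 0\ \text{as }r\to\infty,
\end{equation}
with $C$ depending only on $\|u\|_{\mathcal H^1(\Omega_-)}$, and then pass to the limit; continuity in $\mathcal H^1$ of the left-hand side of \eqref{leray-aux} as a function in, say, $L^1_{\mathrm{loc}}(R_0,\infty)$ of $r$ makes the density argument legitimate.

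The core estimate \eqref{leray-aux} I would obtain for smooth $u$ by the fundamental theorem of calculus in the radial direction together with Cauchy--Schwarz. Writing $\mathbf x=r\mathbf y$ with $\mathbf y\in S^2$, for $r_2>r_1$ one has $u(r_2\mathbf y)-u(r_1\mathbf y)=\int_{r_1}^{r_2}\partial_r u(s\mathbf y)\,ds$, hence, integrating over $S^2$ and using $|\partial_r u|\le|\nabla u|$ and Cauchy--Schwarz in $s$,
\begin{equation}
\label{leray-fund}
\int_{S^2}\bigl|u(r_2\mathbf y)-u(r_1\mathbf y)\bigr|\,d\sigma_{\mathbf y}\le \Bigl(\int_{r_1}^{r_2}\!\!\int_{S^2}|\nabla u(s\mathbf y)|^2\,s^2\,ds\,d\sigma_{\mathbf y}\Bigr)^{1/2}\Bigl(\int_{r_1}^{r_2}\frac{ds}{s^2}\Bigr)^{1/2}\cdot|S^2|^{1/2},
\end{equation}
so the first factor is at most $\|\nabla u\|_{L^2(\{r_1<|\mathbf x|<r_2\})}$ and the second is $O(r_1^{-1/2})$. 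Thus $r\mapsto\int_{S^2}|u(r\mathbf y)|\,d\sigma_{\mathbf y}$ is Cauchy as $r\to\infty$ and converges to some limit $\ell\ge 0$; it remains to show $\ell=0$. For this I would use the weighted $L^2$ bound: $\int_{R_0}^\infty\!\int_{S^2}|u(s\mathbf y)|^2\,\rho(s)^{-2}s^2\,ds\,d\sigma_{\mathbf y}<\infty$, and since $\rho(s)^{-2}s^2\sim 1$ for large $s$, this forces $\liminf_{s\to\infty}\int_{S^2}|u(s\mathbf y)|^2\,d\sigma_{\mathbf y}=0$ (otherwise the integral over $s$ would diverge); combined with Cauchy--Schwarz $\bigl(\int_{S^2}|u|\,d\sigma\bigr)^2\le|S^2|\int_{S^2}|u|^2\,d\sigma$ and the already-established convergence of $\int_{S^2}|u(r\mathbf y)|\,d\sigma_{\mathbf y}$ to $\ell$, we get $\ell^2\le|S^2|\cdot 0$, i.e. $\ell=0$. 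One can equally invoke the Sobolev embedding \eqref{weight-4}, which gives $u\in L^6(\Omega_-)$ and hence, by the same Fubini argument, $\int_R^\infty\!\int_{S^2}|u(s\mathbf y)|^6 s^2\,ds\,d\sigma_{\mathbf y}<\infty$, again yielding a sequence $r_k\to\infty$ along which the spherical means tend to zero.

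The main obstacle is not any single estimate but making the reduction to smooth functions fully rigorous: the map $u\mapsto\bigl(r\mapsto\int_{S^2}|u(r\cdot)|\,d\sigma\bigr)$ must be controlled in a topology in which $\mathcal D(\overline\Omega_-)$ is dense in $\mathcal H^1(\Omega_-)$, and the trace-on-spheres $u\mapsto u(r\cdot)$ is only defined for a.e.\ $r$ for a general Sobolev function. The clean way around this is to prove \eqref{leray-fund} for smooth $u$, note it shows $r\mapsto\int_{S^2}|u(r\mathbf y)|\,d\sigma_{\mathbf y}$ has a limit $\ell(u)$ at infinity with $|\ell(u)-\ell(v)|\le C\|u-v\|_{\mathcal H^1(\Omega_-)}$ by applying \eqref{leray-fund} to $u-v$ and \eqref{leray-aux} at a fixed radius (plus the continuous Sobolev trace on a sphere), so $\ell:\mathcal H^1(\Omega_-)\to[0,\infty)$ extends continuously; since $\ell$ vanishes on the dense set $\mathcal D(\overline\Omega_-)$ it vanishes identically, which is exactly \eqref{behavior-infty-sm-ms}. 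This also matches the cited reference \cite[Definition 2.2, Remark 2.3]{Am-Ra}, from which the corollary is essentially quoted, so in the write-up I would keep the argument short and refer to it.
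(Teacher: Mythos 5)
Your argument is correct, but it follows a genuinely different route from the paper's. The paper's proof is a two-line deduction from the quoted Lemma \ref{behavior-infinity-s}: since $\nabla u\in L^2(\Omega _{-})^3$, that lemma supplies a constant $u_{\infty }$ with $u-u_{\infty }\in L^6(\Omega _{-})$ and $\lim_{r\to \infty }\int _{S^2}|u(r{\bf y})-u_{\infty }|d\sigma _{\bf y}=0$ (this is \eqref{sm-sm}); the embedding \eqref{weight-4} gives $u\in L^6(\Omega _{-})$ as well, so the constant $u_{\infty }$ belongs to $L^6$ of an unbounded domain and must vanish, whence \eqref{behavior-infty-sm-ms}. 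You instead re-derive the convergence of the spherical means from first principles: the radial fundamental theorem of calculus plus Cauchy--Schwarz shows that $r\mapsto \int _{S^2}|u(r{\bf y})|d\sigma _{\bf y}$ is Cauchy at infinity, with increments controlled by $r_1^{-1/2}\|\nabla u\|_{L^2(\{|{\bf x}|>r_1\})}$, and the weighted integrability $\rho ^{-1}u\in L^2(\Omega _{-})$ forces the limit to vanish along a subsequence, hence identically. What your approach buys is independence from the external reference \cite{Am-Ra} (and from the $L^6$ embedding), essentially re-proving the relevant part of Lemma \ref{behavior-infinity-s} in the special case $u_{\infty }=0$; the cost is the extra work of passing from smooth functions to general $u\in {\mathcal H}^{1}(\Omega _{-})$. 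Your proposed fix for that step is sound, but in a full write-up you should state explicitly the uniform-in-$r$ trace bound $\int _{S^2}|w(r{\bf y})|d\sigma _{\bf y}\leq C\|w\|_{{\mathcal H}^{1}(\Omega _{-})}$ that makes the density argument legitimate; it follows, for instance, by averaging the radial fundamental theorem of calculus over $s\in [r,2r]$ and using both terms of the ${\mathcal H}^{1}$-norm, which in fact yields the decay $O(r^{-1/2})$ and could shorten your argument further.
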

\begin{proof}
Indeed, $u\in {\mathcal H}^{1}(\Omega _{-})$ implies that $\nabla u\in L^2(\Omega _{-})^3$. Then by Lemma \ref{behavior-infinity-s} there exists a constant $u_{\infty }\in {\mathbb R}$ such that $u-u_{\infty }\in L^6(\Omega _{-})$. Also, $u\in L^6(\Omega _{-})$ by embedding  (\ref{weight-4}). Therefore, the constant $u_{\infty }\in L^6(\Omega _{-})$, and accordingly $u_{\infty}=0$. Hence, (\ref{sm-sm}) implies the validity of (\ref{behavior-infty-sm-ms}).
\end{proof}

{Let $X$ be either an open subset or a surface in ${\mathbb R}^3$. Then, all along the paper, we use the
notation $\langle \cdot ,\cdot \rangle _X$ for the duality pairing of two dual Sobolev spaces defined on $X$.}

\subsection{\bf The Brinkman and Stokes systems in standard Sobolev spaces}

Let $\Omega '$ be $\Omega_+$, $\Omega_-$ or $\mathbb R^3$, $\alpha\in\mathbb R$ and let $({\bf u},\pi )\in {H}^1(\Omega ')^3\times L^2(\Omega ')$. Then the Brinkman (and Stokes) system is understood in the distributional sense as
\begin{equation}
\label{distributionalB}
\langle {\boldsymbol{\mathcal L}}_{\alpha }({\bf u},\pi ), {\bf w}\rangle_{\Omega'}=\langle {\bf f}, {\bf w}\rangle_{\Omega'},\quad
\langle {\rm div}\,{\bf u},g\rangle_{\Omega'}=0, \quad \forall\,({\bf w}, g)\in \mathcal D(\Omega')^3\times\mathcal D(\Omega'),
\end{equation}
where
\begin{equation}
\label{variationalB}
\langle {\boldsymbol{\mathcal L}}_{\alpha }({\bf u},\pi ), {\bf w}\rangle_{\Omega'}:=
\langle \triangle {\bf u}-\alpha{\bf u}-\nabla \pi ,{\bf w}\rangle_{\Omega'}=
-\langle \nabla {\bf u},\nabla {\bf w}\rangle _{\Omega'}-\langle \alpha{\bf u},{\bf w}\rangle _{\Omega'}
+\langle \pi ,{\rm{div}}\ {\bf w}\rangle _{\Omega'}.
\end{equation}
Since the space $\mathcal D(\Omega ')$ is dense in $\widetilde{H}^{1}(\Omega ')$ and in $L^2(\Omega ')$, and the bilinear form in the right hand side of \eqref{variationalB} is bounded on $({\bf u},\pi )\in {H}^1(\Omega ')^3\times L^2(\Omega ')$ and
$\mathbf w\in \widetilde{H}^{1}(\Omega ')^3$, expression \eqref{variationalB} defines a bounded linear operator
\begin{equation}\label{Lbound}
{\boldsymbol{\mathcal L}}_{\alpha }: {H}^1(\Omega ')^3\times L^2(\Omega ')\to {H}^{-1}(\Omega ')^3=\big(\widetilde{H}^{1}(\Omega ')^3\big)'.
\end{equation}
Note that the operator ${\rm div}:{H}^1(\Omega ')^3\to L^2(\Omega ')$ in the second equation in \eqref{distributionalB} is evidently bounded.

If $({\bf u},\pi)\in C^1(\overline{\Omega_{\pm}})^3\times C^0(\overline{\Omega_{\pm}})$, we define the interior and exterior conormal derivatives (i.e., {\it boundary tractions}) for the Brinkman {and Stokes systems}, ${\bf t}_{\alpha}^{\pm }({\bf u},\pi )$, {by} the well-known formula
\begin{align}
\label{2.37-}
{\bf t}_{\alpha}^{\pm}({\bf u},\pi ):={\gamma _\pm}\left(-\pi{\mathbb I}+2{\mathbb E}({\bf u})\right)\nu ,
\end{align}
where ${\mathbb E}({\bf u})$ is the symmetric part of $\nabla {\bf u}$, and $\nu{=\nu^+}$ is the outward unit normal to $\Omega_{ +}$, defined a.e. on $\partial {\Omega }$. Then for $\alpha\in\mathbb R$ and any function $\boldsymbol{\varphi} \in {\mathcal D}({\mathbb R}^3)^3$ we obtain the the first Green identity
\begin{align}
\label{special-case-1}
{\pm}\left\langle {\bf t}_{\alpha}^{\pm}({\bf u},\pi ),
\boldsymbol{\varphi} \right\rangle _{_{\!\partial\Omega  }}= &2\langle {\mathbb E}({\bf
u}),{\mathbb E}(\boldsymbol{\varphi} )\rangle _{\Omega_{\pm}}+\alpha \langle {\bf
u},\boldsymbol{\varphi} \rangle _{\Omega_{\pm}} -\langle \pi,{\rm{div}}\ \boldsymbol{\varphi} \rangle _{\Omega_{\pm}}+\left\langle {\boldsymbol{\mathcal L}}_{\alpha }({\bf u},\pi ),\boldsymbol{\varphi}\right\rangle _{{\Omega_{\pm}}}.
\end{align}
This formula suggests the following {weak} definition of the conormal derivative in the setting of $L^2$-based Sobolev spaces (cf. \cite[Lemma 3.2]{Co}, \cite[Lemma 2.2]{K-L-W1}, \cite[Lemma 2.2]{K-M-W4}, \cite[Definition 3.1, Theorem 3.2]{Mikh}, \cite[Proposition 3.6]{M-M-W}, \cite[Theorem 10.4.1]{M-W}, \cite{Mikh-3}).
\begin{lem}
\label{conormal-derivative-generalized-Robin} Let $\Omega_{ +}\subset {\mathbb R}^3$ be a bounded Lipschitz domain {and ${\Omega_-}= {\mathbb R}^3\setminus\overline{\Omega_+}$}. Let $\alpha \geq 0$ and
\begin{multline}
\label{space-Brinkman-generalized-Robin}
{\pmb{H}}^{1}({\Omega_{\pm}},{\boldsymbol{\mathcal L}}_{\alpha }):=\Big\{({\bf u}_{\pm},\pi_{\pm} ,{\tilde{\bf f}}_{\pm})\in
H^{1}({\Omega_{\pm}})^3\times
L^{2}({\Omega_{\pm}})\times
\widetilde{H}^{-1}({\Omega_{\pm}})^3:\\
{\boldsymbol{\mathcal L}}_{\alpha }({\bf u}_{\pm},\pi_{\pm} )={\tilde{\bf f}}_{\pm}|_{\Omega_{\pm}} \mbox{ and } {\rm{div}}\ {\bf u}_{\pm}=0 \mbox{ in } {\Omega_{\pm}}\Big\}.
\end{multline}
Then the conormal derivative {operators} ${\bf t}_{\alpha}^{\pm}:{\pmb{H}}^{1}({\Omega_{\pm}},{\boldsymbol{\mathcal L}}_{\alpha })\to H^{-\frac{1}{2}}(\partial\Omega )^3,$ defined by
\begin{align}
\label{conormal-generalized-1-Robin-new1}
&{\pmb{H}}^{1}({\Omega_{\pm}},{\boldsymbol{\mathcal L}}_{\alpha
})\ni ({\bf u}_{\pm},\pi_{\pm} ,{\tilde{\bf f}}_{\pm})\longmapsto {\bf t}_{\alpha}^{\pm}({\bf u}_{\pm},\pi_{\pm} ;{\tilde{\bf f}}_{\pm})\in H^{-\frac{1}{2}}(\partial\Omega )^3,\\
\label{conormal-generalized-1-Robin} &{\pm}\big\langle {\bf t}_{\alpha}^{\pm}({\bf u}_{\pm},\pi_{\pm} ;{\tilde{\bf f}}_{\pm}),{\boldsymbol{\Phi} }\big\rangle
_{_{\!\partial\Omega  }}\!\!\!:=2\langle {\mathbb E}({\bf u}_{\pm}),{\mathbb E}(\gamma^{-1}_{\pm}{\boldsymbol{\Phi}})\rangle _{\Omega_{\pm}}+\alpha \langle {\bf u}_{\pm},\gamma^{-1}_{\pm}{\boldsymbol{\Phi}}\rangle _{\Omega_{\pm}}\nonumber\\
&\quad \quad \quad \quad \quad \quad \quad \quad \quad \quad \quad -\langle \pi_{\pm},{\rm{div}}(\gamma^{-1}_{\pm}{\boldsymbol{\Phi}})\rangle _{\Omega_{\pm}}+\langle {\tilde{\bf f}}_{\pm},\gamma^{-1}_{\pm}{\boldsymbol{\Phi}}\rangle _{{\Omega_{\pm}}},\ \forall \ \boldsymbol{\Phi} \in H^{\frac{1}{2}}(\partial\Omega )^3
\end{align}
{are} bounded and do not depend on the choice of the right inverse $\gamma^{-1}_{\pm}$ of the trace operator\\ $\gamma_{\pm}:H^{1}({\Omega_{\pm}})^3\to H^{\frac{1}{2}}(\partial\Omega )^3$.
In addition, for all
$({\bf u}_{\pm},\pi_{\pm} ,{\tilde{\bf f}}_{\pm})\in {\pmb{H}}^{1}({\Omega_{\pm}},{\boldsymbol{\mathcal L}}_{\alpha })$
and for any ${{\bf w}_\pm}\in H^{1}({\Omega_{\pm}})^3$, the following Green {identities hold}:
\begin{multline}
\label{conormal-generalized-2}
\!\!\!\!\!{\pm}\left\langle {\bf t}_{\alpha}^{\pm}({\bf u}_{\pm},\pi_{\pm};{\tilde{\bf f}}_{\pm}),\gamma_{\pm}{{\bf w}_\pm}\right\rangle _{_{\!\partial\Omega  }}\!=\!
2\langle {\mathbb E}({\bf u}_{\pm}),{\mathbb E}({{\bf w}_\pm})\rangle _{\Omega_{\pm}}
\!+\!\alpha \langle {\bf u}_{\pm},{{\bf w}_\pm}\rangle _{\Omega_{\pm}}\!
-\!\langle \pi_{\pm},{\rm{div}}\ {{\bf w}_\pm}\rangle _{\Omega_{\pm}}
+\langle {\tilde{\bf f}}_{\pm},{{\bf w}_\pm}\rangle _{{\Omega_{\pm}}}.
\end{multline}
\end{lem}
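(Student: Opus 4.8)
The plan is to define the conormal derivative operators via the right-hand side of \eqref{conormal-generalized-1-Robin} and then verify, in order, that the definition is meaningful, independent of the chosen right inverse $\gamma^{-1}_\pm$, bounded, and finally that the Green identities \eqref{conormal-generalized-2} hold. First I would check that each term on the right-hand side of \eqref{conormal-generalized-1-Robin} makes sense for $({\bf u}_\pm,\pi_\pm,\tilde{\bf f}_\pm)\in{\pmb H}^1(\Omega_\pm,{\boldsymbol{\mathcal L}}_\alpha)$ and $\boldsymbol\Phi\in H^{1/2}(\partial\Omega)^3$: the first three terms are bounded bilinear pairings of $L^2$-functions (using $\mathbb E({\bf u}_\pm)\in L^2$, $\pi_\pm\in L^2$, and $\gamma^{-1}_\pm\boldsymbol\Phi\in H^1(\Omega_\pm)^3$), while the last term $\langle\tilde{\bf f}_\pm,\gamma^{-1}_\pm\boldsymbol\Phi\rangle_{\Omega_\pm}$ is the duality pairing between $\widetilde H^{-1}(\Omega_\pm)^3$ and $H^1(\Omega_\pm)^3$, which is legitimate precisely because of the duality relation \eqref{duality-spaces}, ${H}^{-1}(\Omega_\pm)=(\widetilde H^1(\Omega_\pm))'$, noting $\gamma^{-1}_\pm\boldsymbol\Phi\in H^1(\Omega_\pm)^3$ and $\tilde{\bf f}_\pm\in\widetilde H^{-1}(\Omega_\pm)^3$; here one must be slightly careful about which space is the ``tilde'' space, but the pairing $\langle\tilde{\bf f}_\pm,\cdot\rangle$ of $\widetilde H^{-1}(\Omega_\pm)^3$ against $H^1(\Omega_\pm)^3$ is exactly the one licensed by \eqref{duality-spaces}. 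Boundedness of the resulting functional of $\boldsymbol\Phi$, hence membership in $(H^{1/2}(\partial\Omega)^3)'=H^{-1/2}(\partial\Omega)^3$, then follows from the continuity of $\gamma^{-1}_\pm:H^{1/2}(\partial\Omega)^3\to H^1(\Omega_\pm)^3$ from Lemma \ref{trace-operator1}, combined with the continuous dependence of ${\bf u}_\pm$, $\pi_\pm$, $\tilde{\bf f}_\pm$ on the data in ${\pmb H}^1(\Omega_\pm,{\boldsymbol{\mathcal L}}_\alpha)$.

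The crucial step — and the one I expect to be the main obstacle — is showing that the right-hand side of \eqref{conormal-generalized-1-Robin} does not depend on the choice of right inverse $\gamma^{-1}_\pm$. The idea is this: if $\gamma^{-1}_\pm$ and $\tilde\gamma^{-1}_\pm$ are two continuous right inverses, then for fixed $\boldsymbol\Phi\in H^{1/2}(\partial\Omega)^3$ the difference ${\bf w}_0:=\gamma^{-1}_\pm\boldsymbol\Phi-\tilde\gamma^{-1}_\pm\boldsymbol\Phi$ lies in $H^1(\Omega_\pm)^3$ with $\gamma_\pm{\bf w}_0=0$, so ${\bf w}_0\in\mathring H^1(\Omega_\pm)^3=\widetilde H^1(\Omega_\pm)^3$ (in the exterior case one uses the characterization of $\mathring{\mathcal H}^1$, but here the conormal derivative is posed on the standard space $H^1(\Omega_-)^3$, so one uses density of ${\mathcal D}(\Omega_\pm)$ in $\widetilde H^1(\Omega_\pm)$). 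Then I would show that
\begin{equation*}
2\langle{\mathbb E}({\bf u}_\pm),{\mathbb E}({\bf w}_0)\rangle_{\Omega_\pm}+\alpha\langle{\bf u}_\pm,{\bf w}_0\rangle_{\Omega_\pm}-\langle\pi_\pm,{\rm div}\,{\bf w}_0\rangle_{\Omega_\pm}+\langle\tilde{\bf f}_\pm,{\bf w}_0\rangle_{\Omega_\pm}=0
\end{equation*}
by first establishing it for ${\bf w}_0=\boldsymbol\varphi\in{\mathcal D}(\Omega_\pm)^3$, where it reduces via integration by parts and the identity $2\langle{\mathbb E}({\bf u}),{\mathbb E}(\boldsymbol\varphi)\rangle=-\langle\triangle{\bf u},\boldsymbol\varphi\rangle-\langle\nabla({\rm div}\,{\bf u}),\boldsymbol\varphi\rangle$ (the second piece vanishing since ${\rm div}\,{\bf u}_\pm=0$) precisely to the distributional equation ${\boldsymbol{\mathcal L}}_\alpha({\bf u}_\pm,\pi_\pm)=\tilde{\bf f}_\pm|_{\Omega_\pm}$ built into the definition of ${\pmb H}^1(\Omega_\pm,{\boldsymbol{\mathcal L}}_\alpha)$; and then passing to general ${\bf w}_0\in\widetilde H^1(\Omega_\pm)^3$ by density and the boundedness of every term (including $\langle\tilde{\bf f}_\pm,\cdot\rangle$, again via \eqref{duality-spaces}). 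This is exactly the content of the first Green identity \eqref{special-case-1} extended to the weak setting, so the well-definedness is essentially equivalent to that density argument going through.

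Finally, the Green identities \eqref{conormal-generalized-2} follow immediately once well-definedness is in hand: given ${\bf w}_\pm\in H^1(\Omega_\pm)^3$, the element $\gamma^{-1}_\pm(\gamma_\pm{\bf w}_\pm)$ and ${\bf w}_\pm$ have the same trace, so their difference is in $\widetilde H^1(\Omega_\pm)^3$, and the same vanishing identity just proved shows that replacing $\gamma^{-1}_\pm(\gamma_\pm{\bf w}_\pm)$ by ${\bf w}_\pm$ in \eqref{conormal-generalized-1-Robin} (with $\boldsymbol\Phi=\gamma_\pm{\bf w}_\pm$) leaves the value unchanged, which is precisely \eqref{conormal-generalized-2}. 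I would remark that the sign conventions $\pm$ reflect the orientation of the outward normal $\nu=\nu^+$ relative to $\Omega_-$, and that linearity of the operators ${\bf t}_\alpha^\pm$ in $({\bf u}_\pm,\pi_\pm,\tilde{\bf f}_\pm)$ is clear from the linearity of the defining formula. The only genuinely delicate points are the correct bookkeeping of the $\widetilde H^{-1}$ / $H^{-1}$ duality for the $\tilde{\bf f}_\pm$ term and the density of ${\mathcal D}(\Omega_\pm)^3$ in $\widetilde H^1(\Omega_\pm)^3$ in the unbounded case $\Omega_-$ — both of which are supplied by results quoted earlier in the excerpt (\eqref{duality-spaces} and the density statements in Section \ref{S2.2}).
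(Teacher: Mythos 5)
Your proposal is correct and follows essentially the same route as the paper: the paper states this lemma with references to the literature and then carries out exactly this argument (well-definedness of each dual pairing via \eqref{duality-spaces}, boundedness from the continuity of $\gamma^{-1}_{\pm}$, independence of the right inverse by reducing to test functions and using the distributional equation, and the Green identity by replacing $\gamma^{-1}_{\pm}\gamma_{\pm}{\bf w}_{\pm}$ with ${\bf w}_{\pm}$ modulo an element of $\widetilde H^{1}(\Omega_{\pm})^3$) in its proof of the analogous weighted-space Lemma \ref{conormal-derivative-generalized-Robin-S}. No gaps.
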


\begin{rem}
\label{CSD}
By exploiting arguments similar to those of the proof of Theorem 3.10 and the paragraph following it in \cite{Mikh}, one can see that the weak conormal derivative on $\partial\Omega$ can be equivalently defined by the dual form like {\eqref{conormal-generalized-1-Robin}} but only on a Lipschitz subset $\Omega'_{\pm}\subset\Omega_{\pm}$ such that $\partial \Omega \subset \partial \Omega _{\pm }'$ and $\Omega _{\pm }'':=\Omega _{\pm }\setminus \overline{\Omega _{\pm }'}$ has closure equal to $\Omega \setminus \Omega _{\pm }'$ (i.e., on $\Omega'_{\pm}$ laying near the boundary $\partial\Omega$) as
\begin{equation}\label{RCN}
{\bf t}_{\alpha}^{\pm}({\bf u}_{\pm},\pi_{\pm} ;{\tilde{\bf f}}'_{\pm})
=r_{_{\partial\Omega}}{\bf t}_{\alpha}^{\prime\pm}({\bf u}_{\pm},\pi_{\pm} ;{\tilde{\bf f}}'_{\pm}),
\end{equation}
where $\tilde{\mathbf f}'_\pm\in\widetilde{H}^{-1}(\Omega'_\pm)^3$ is such that
$r_{\mathbb R^3\setminus\overline{\Omega''_\pm}}\tilde{\mathbf f}'_\pm
=r_{\mathbb R^3\setminus\overline{\Omega''_\pm}}\tilde{\mathbf f}_\pm$, $\overline{\Omega''_\pm}:=\Omega_\pm\backslash\Omega'_\pm$, and
${\bf t}_{\alpha}^{\prime\pm}:{\pmb{H}}^{1}({\Omega'_{\pm} },{\boldsymbol{\mathcal L}}_{\alpha})\to H^{-\frac{1}{2}}(\partial\Omega'_{\pm} )^3,$
is the continuous operator defined as
\begin{multline}
\label{conormal-SD}
\!\!\!\!\!\!\!\big\langle {\bf t}_{\alpha}^{\prime\pm}({\bf u}_{\pm},\pi_{\pm} ;{\tilde{\bf f}'}_{\pm}),
{\boldsymbol{\Phi} }\big\rangle_{_{\!\partial\Omega'_{\pm}  }}\!\!\!
:=2\langle {\mathbb E}({\bf u}_{\pm}),{\mathbb E}(\gamma^{-1}_{\pm}{\boldsymbol{\Phi}})\rangle _{\Omega'_{\pm}}
+\alpha \langle {\bf u}_{\pm},\gamma^{-1}_{\pm}{\boldsymbol{\Phi}}\rangle _{\Omega'_{\pm} }\\
-\langle \pi_{\pm},{\rm{div}}(\gamma^{-1}_{\pm}{\boldsymbol{\Phi}})\rangle _{\Omega'_{\pm} }
%\quad \quad \quad \quad \quad \quad \quad \quad \quad \quad \quad \quad \quad
+\langle {\tilde{\bf f}'}_{\pm},\gamma^{-1}_{\pm}{\boldsymbol{\Phi}}\rangle _{{\Omega'_{\pm} }},\quad \forall \ \boldsymbol{\Phi} \in H^{\frac{1}{2}}(\partial\Omega'_{\pm})^3.
\end{multline}
Moreover, definition \eqref{RCN}-\eqref{conormal-SD} is well applicable to the class of functions
$({\bf u}_{\pm},\pi_{\pm} ;{\tilde{\bf f}'}_{\pm})\in {\pmb{H}}^{1}({\Omega'_{\pm} },{\boldsymbol{\mathcal L}}_{\alpha})$ that are not obliged to belong to ${\pmb{H}}^{1}({\Omega_{\pm} },{\boldsymbol{\mathcal L}}_{\alpha})$.
{This} is particularly useful for the functions ${\bf u}_-$ tending to a non-zero constant at infinity and which accordingly do not belong to { ${H}^{1}({\Omega_- })^3$}.
\end{rem}
\begin{rem}
\label{2.3}
Note that the conormal derivative operator is linear in the sense that
\begin{align}
\label{2.40a}
c_1{\bf t}_{\alpha}^{\pm}({\bf u}_1,\pi _1;{\tilde{\bf f}}_1)+c_2{\bf t}_{\alpha}^{\pm}({\bf u}_2,\pi _2;{\tilde{\bf f}}_2)={\bf t}_{\alpha}^{\pm}(c_1{\bf u}_1+c_2{\bf u}_2,c_1\pi _1+c_2\pi _2;c_1{\tilde{\bf f}}_1+c_2{\tilde{\bf f}}_2)
\end{align}
if $({\bf u}_i,\pi _i;{\tilde{\bf f}}_i)\in {\pmb{H}}^{1}({\Omega_\pm},{\boldsymbol{\mathcal L}}_{\alpha })$, $i=1,2$,
and $c_1,c_2\in {\mathbb R}$.
\end{rem}
\begin{rem}\label{Rm2.4}
If ${\tilde{\bf f}}\in \widetilde H^{-\frac{1}{2}}(\Omega_\pm)^3\subset{\widetilde H^{-1}(\Omega_\pm)^3}$, then ${\tilde{\bf f}}$ {is} {\em uniquely} determined by $({\bf u}_{\pm},\pi_{\pm} )$ as the {\em unique} extension of ${\boldsymbol{\mathcal L}}_{\alpha }({\bf u}_{\pm},\pi_{\pm} )\in H^{-1/2}(\Omega_\pm)^3$ to ${\widetilde H^{-\frac{1}{2}}({\Omega_\pm})^3}$. In this case, ${\bf t}_{\alpha}^{\pm}({\bf u},\pi;{\tilde{\bf f}})$ becomes the {\em canonical} conormal derivative (cf. \cite{Mikh}, \cite{Mikh-3}), which we denote as
${\bf t}_{\alpha}^{\pm}({\bf u},\pi)$. One can show that this notation is consistent with the classical definition of the conormal derivative if $({\bf u},\pi )\in C^1(\overline{\Omega_\pm })^3\times C^0(\overline{\Omega_\pm })$ $($cf. \cite[Theorem 3.16]{Mikh}$)$. We will particularly use this property for ${\tilde{\bf f}}=\mathbf 0$.

Note also that by Remark~\ref{CSD}, for the conormal derivative to be canonical, it is sufficient that ${\tilde{\bf f}}\in \widetilde H^{-1/2}_{\rm loc}(\Omega'_\pm)^3$ in a domain $\Omega'_\pm\subset\Omega_\pm$ laying near the boundary $\partial \Omega $ in the sense of Remark \ref{CSD}.
\end{rem}

\subsection{\bf The Stokes system in weighted Sobolev spaces in exterior domain}

{For the exterior domain $\Omega_-$, we need to consider the Stokes system in weighted Sobolev spaces.

Let  $({\bf u},\pi )\in {\mathcal H}^1({\Omega_-})^3\times L^2({\Omega_-})$. Then taking $\alpha=0$ in \eqref{distributionalB} we obtain the Stokes system  in the distribution sense,
\begin{equation}
\label{distributionalS}
\langle {\boldsymbol{\mathcal L}}_{0}({\bf u},\pi ), {\bf w}\rangle_{\Omega_-}=\langle {\tilde{\bf f}}, {\bf w}\rangle_{\Omega_-},\quad
\langle {\rm div}\,{\bf u},g\rangle_{\Omega_-}=0, \quad \forall\,({\bf w}, g)\in \mathcal D({\Omega_-})^3\times\mathcal D({\Omega_-}),
\end{equation}
where
\begin{equation}
\label{variational}
\langle {\boldsymbol{\mathcal L}}_{0}({\bf u},\pi ), {\bf w}\rangle_{\Omega_-}:=
\langle \triangle {\bf u}-\nabla \pi ,{\bf w}\rangle_{\Omega_-}=
-\langle \nabla {\bf u},\nabla {\bf w}\rangle _{\Omega_-}
+\langle \pi ,{\rm{div}}\ {\bf w}\rangle _{\Omega_-}.
\end{equation}
Since the space $\mathcal D({\Omega_-})^3$ is dense in $\widetilde{\mathcal H}^{1}({\Omega_-})^3$ and the bilinear form in the right hand side of \eqref{variational} is bounded on $({\bf u},\pi )\in {\mathcal H}^1({\Omega_-})^3\times L^2({\Omega_-})$ and
$\mathbf w\in \widetilde{\mathcal H}^{1}({\Omega_-})^3$, expression \eqref{variational} defines a bounded linear operator
$${\boldsymbol{\mathcal L}}_{0}: {\mathcal H}^1({\Omega_-})^3\times L^2({\Omega_-})\to {\mathcal H}^{-1}({\Omega_-})^3=\big(\widetilde{\mathcal H}^{1}({\Omega_-})^3\big)'$$
in weighted Sobolev spaces in exterior domains, in addition to \eqref{Lbound} also valid for $\alpha=0$ in standard Sobolev spaces.
Note that the operator ${\rm div}:  {\mathcal H}^1({\Omega_-})^3\to L^2({\Omega_-})$ in the second equation in \eqref{distributionalS} is bounded as well due to the definition of the space ${\mathcal H}^1({\Omega_-})^3$.}

We now show the following version of Lemma \ref{conormal-derivative-generalized-Robin} for the Stokes system in weighted Sobolev spaces on exterior Lipschitz domains.
\begin{lem}
\label{conormal-derivative-generalized-Robin-S}
Let ${\Omega_+ }\subset {\mathbb R}^3$ be a bounded Lipschitz domain with connected boundary $\partial \Omega $. Let $\Omega _{-}:={\mathbb R}^3\setminus \overline{\Omega_+ }$ and
\begin{align}
\label{space-Brinkman-generalized-Robin-S}
\pmb{\mathcal H}^{1}({\Omega }_{-},{\boldsymbol{\mathcal L}}_{0}):=\Big\{({\bf u},\pi ,{\tilde{\bf f}})\in &{\mathcal H}^{1}({\Omega }_{-})^3\times L^{2}({\Omega }_{-})\times
\widetilde{\mathcal H}^{-1}({\Omega }_{-})^3:\nonumber\\
&{\boldsymbol{\mathcal L}}_{0}({\bf u},\pi ):=\triangle {\bf u}-\nabla \pi ={\tilde{\bf f}}|_{\Omega _{-}} \mbox{ and } {\rm{div}}\ {\bf u}=0 \mbox{ in } {\Omega }_{-}\Big\}.
\end{align}
Then the conormal derivative operator ${\bf t}_0^{-}:{\pmb{\mathcal H}}^{1}({\Omega _{-}},{\boldsymbol{\mathcal L}}_{0})\to H^{-\frac{1}{2}}(\partial\Omega )^3$,
\begin{align}
\label{conormal-generalized-1-Robin-new1-S}
&{\pmb{\mathcal H}}^{1}({\Omega }_{-},{\boldsymbol{\mathcal L}}_{0})\ni ({\bf u},\pi ,{\tilde{\bf f}})\longmapsto {\bf t}_0^{-}({\bf u},\pi ;{\tilde{\bf f}})\in H^{-\frac{1}{2}}(\partial\Omega )^3,
\\
\label{conormal-generalized-1-Robin-S}
&\left\langle {\bf t}_0^{-}({\bf u},\pi ;{\tilde{\bf f}}),{\boldsymbol\phi }\right\rangle_{_{\!\partial\Omega  }}\!\!\!:=\!-2\langle {\mathbb E}({\bf u}),{\mathbb E}(\gamma^{-1}_- \phi)\rangle _{\Omega _{-}}\!\!+\!\!\langle \pi,{\rm{div}}(\gamma^{-1}_-\phi)\rangle _{\Omega _{-}}
\!\!-\!\!\langle {\tilde{\bf f}},\gamma^{-1}_-\phi\rangle _{{\Omega _{-}}},\, \forall \ \boldsymbol\phi \in H^{\frac{1}{2}}(\partial\Omega )^3,
\end{align}
is linear and bounded, and does not depend on the choice of the right inverse $\gamma^{-1}_-$
of the trace operator $\gamma_{-}:{\mathcal H}^{1}({\Omega }_{-})^3\to H^{\frac{1}{2}}(\partial\Omega )^3$$)$. In addition, the following Green formula holds:
\begin{align}
\label{conormal-generalized-2-Sigma-S}
{\left\langle {\bf t}_0^{-}({\bf u},\pi ;{\tilde{\bf f}}),\gamma_{-}{\bf w}\right\rangle _{_{\partial\Omega  }}\!\!\!=\!-\!2\langle {\mathbb E}({\bf u}),{\mathbb E}({\bf w})\rangle _{\Omega _{-}}+\langle \pi,{\rm{div}}\ {\bf w}\rangle _{\Omega _{-}}\!\!-\!\langle {\tilde{\bf f}},{\bf w}\rangle _{{\Omega _{-}}}}
\end{align}
for all $({\bf u},\pi ,{\tilde{\bf f}})\in {\pmb{\mathcal H}}^{1}({\Omega }_{-},{\boldsymbol{\mathcal L}}_{0})$ and ${\bf w}\in {\mathcal H}^{1}({\Omega }_{-})^3$.
\end{lem}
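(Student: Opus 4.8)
The plan is to mimic the proof of Lemma~\ref{conormal-derivative-generalized-Robin}, but with the standard Sobolev spaces on $\Omega_+$ replaced by the weighted spaces on $\Omega_-$, using the fact (established in Lemma~\ref{trace-operator1-S}) that the exterior trace $\gamma_-:\mathcal H^1(\Omega_-)\to H^{1/2}(\partial\Omega)$ is surjective with a bounded linear right inverse $\gamma^{-1}_-$. First I would check that the right-hand side of \eqref{conormal-generalized-1-Robin-S} is a well-defined bounded linear functional of $\boldsymbol\phi\in H^{1/2}(\partial\Omega)^3$ for each fixed $(\mathbf u,\pi,\tilde{\mathbf f})\in\pmb{\mathcal H}^1(\Omega_-,\boldsymbol{\mathcal L}_0)$: the term $\langle\mathbb E(\mathbf u),\mathbb E(\gamma^{-1}_-\boldsymbol\phi)\rangle_{\Omega_-}$ is controlled by $\|\mathbf u\|_{\mathcal H^1(\Omega_-)^3}\|\gamma^{-1}_-\boldsymbol\phi\|_{\mathcal H^1(\Omega_-)^3}$ since $\nabla\mathbf u,\nabla(\gamma^{-1}_-\boldsymbol\phi)\in L^2(\Omega_-)$; the term $\langle\pi,\mathrm{div}(\gamma^{-1}_-\boldsymbol\phi)\rangle_{\Omega_-}$ by $\|\pi\|_{L^2(\Omega_-)}\|\gamma^{-1}_-\boldsymbol\phi\|_{\mathcal H^1(\Omega_-)^3}$; and the pairing $\langle\tilde{\mathbf f},\gamma^{-1}_-\boldsymbol\phi\rangle_{\Omega_-}$ makes sense precisely because $\tilde{\mathbf f}\in\widetilde{\mathcal H}^{-1}(\Omega_-)^3=(\mathcal H^1(\Omega_-)^3)'$ and $\gamma^{-1}_-\boldsymbol\phi\in\mathcal H^1(\Omega_-)^3$. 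Combining with the boundedness of $\gamma^{-1}_-$ gives $|\langle{\bf t}_0^-(\mathbf u,\pi;\tilde{\mathbf f}),\boldsymbol\phi\rangle_{\partial\Omega}|\le C(\|\mathbf u\|_{\mathcal H^1(\Omega_-)^3}+\|\pi\|_{L^2(\Omega_-)}+\|\tilde{\mathbf f}\|_{\widetilde{\mathcal H}^{-1}(\Omega_-)^3})\|\boldsymbol\phi\|_{H^{1/2}(\partial\Omega)^3}$, so ${\bf t}_0^-(\mathbf u,\pi;\tilde{\mathbf f})\in H^{-1/2}(\partial\Omega)^3$ and the map is bounded; linearity in $(\mathbf u,\pi,\tilde{\mathbf f})$ is immediate from linearity of the right-hand side.

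Next I would prove independence of the choice of right inverse $\gamma^{-1}_-$. If $\gamma^{-1}_{-,1}$ and $\gamma^{-1}_{-,2}$ are two bounded right inverses, then for fixed $\boldsymbol\phi$ the difference $\mathbf w_0:=\gamma^{-1}_{-,1}\boldsymbol\phi-\gamma^{-1}_{-,2}\boldsymbol\phi$ satisfies $\gamma_-\mathbf w_0=0$, hence $\mathbf w_0\in\mathring{\mathcal H}^1(\Omega_-)^3$, which coincides with the closure of $\mathcal D(\Omega_-)^3$ in $\mathcal H^1(\Omega_-)$. So it suffices to show that the right-hand side of \eqref{conormal-generalized-1-Robin-S}, viewed as a functional of $\mathbf w\in\mathcal H^1(\Omega_-)^3$, i.e.
\[
B(\mathbf w):=-2\langle\mathbb E(\mathbf u),\mathbb E(\mathbf w)\rangle_{\Omega_-}+\langle\pi,\mathrm{div}\,\mathbf w\rangle_{\Omega_-}-\langle\tilde{\mathbf f},\mathbf w\rangle_{\Omega_-},
\]
vanishes for $\mathbf w\in\mathcal D(\Omega_-)^3$ (then, being continuous, it vanishes on $\mathring{\mathcal H}^1(\Omega_-)^3$). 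For such $\mathbf w$, integrating by parts and using $\mathrm{div}\,\mathbf u=0$ gives $-2\langle\mathbb E(\mathbf u),\mathbb E(\mathbf w)\rangle_{\Omega_-}=-\langle\nabla\mathbf u,\nabla\mathbf w\rangle_{\Omega_-}$ (the symmetric-part bilinear form reduces to the full-gradient one modulo the divergence-free condition, as in the derivation of \eqref{variationalB}), and then $B(\mathbf w)=-\langle\nabla\mathbf u,\nabla\mathbf w\rangle_{\Omega_-}+\langle\pi,\mathrm{div}\,\mathbf w\rangle_{\Omega_-}-\langle\tilde{\mathbf f},\mathbf w\rangle_{\Omega_-}=\langle\boldsymbol{\mathcal L}_0(\mathbf u,\pi),\mathbf w\rangle_{\Omega_-}-\langle\tilde{\mathbf f},\mathbf w\rangle_{\Omega_-}$ by \eqref{variational}; this is zero since $\boldsymbol{\mathcal L}_0(\mathbf u,\pi)=\tilde{\mathbf f}|_{\Omega_-}$ and $\mathbf w$ is compactly supported in $\Omega_-$.

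Finally, the Green formula \eqref{conormal-generalized-2-Sigma-S} follows by taking $\boldsymbol\phi=\gamma_-\mathbf w$ for arbitrary $\mathbf w\in\mathcal H^1(\Omega_-)^3$: the function $\gamma^{-1}_-(\gamma_-\mathbf w)-\mathbf w$ lies in $\mathring{\mathcal H}^1(\Omega_-)^3$, and by the independence argument just proved one may in particular use the right inverse $\gamma^{-1}_-(\gamma_-\mathbf w)=\mathbf w$ in formula \eqref{conormal-generalized-1-Robin-S}; substituting this choice directly yields \eqref{conormal-generalized-2-Sigma-S}. The main obstacle is making sure the density arguments are legitimate in the weighted setting: one must invoke that $\mathcal D(\Omega_-)^3$ is dense in $\mathring{\mathcal H}^1(\Omega_-)^3$ (by definition), that $\mathring{\mathcal H}^1(\Omega_-)=\{v\in\mathcal H^1(\Omega_-):\gamma_-v=0\}$ (stated in Section~\ref{S2.2}), and that the duality $(\mathcal H^1(\Omega_-)^3)'=\widetilde{\mathcal H}^{-1}(\Omega_-)^3$ from \eqref{weight-3} is exactly what lets the $\tilde{\mathbf f}$-pairing be continuous on all of $\mathcal H^1(\Omega_-)^3$, not merely on $\mathcal D(\Omega_-)^3$; these replace the standard-Sobolev facts used in Lemma~\ref{conormal-derivative-generalized-Robin}. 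The integration-by-parts identity relating $\mathbb E$ to $\nabla$ on divergence-free test fields is routine and needs no decay hypotheses because $\mathbf w$ has compact support.
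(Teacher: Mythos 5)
Your proposal is correct and follows essentially the same route as the paper's proof: boundedness comes from the duality $\widetilde{\mathcal H}^{-1}(\Omega_-)^3=\left(\mathcal H^{1}(\Omega_-)^3\right)'$ together with the bounded right inverse of the exterior trace, and both the independence of $\gamma^{-1}_-$ and the Green formula are reduced to the vanishing of the bilinear functional $B$ on $\mathring{\mathcal H}^{1}(\Omega_-)^3$, checked on $\mathcal D(\Omega_-)^3$ by density. The only cosmetic difference is that you verify the test-function identity by direct integration by parts using $\mathrm{div}\,\mathbf u=0$, whereas the paper invokes the interior Green identity \eqref{conormal-generalized-2} on a bounded Lipschitz subdomain containing the support of the test function; both are valid.
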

\begin{proof}
Note that $\widetilde{\mathcal H}^{-1}({\Omega }_{-})=\big({\mathcal H}^{1}({\Omega }_{-})\big)'$, and that accordingly, the last dual pairing in the right-hand side of (\ref{conormal-generalized-1-Robin-S}) is well-defined. All other dual products in (\ref{conormal-generalized-1-Robin-S}) are also well-defined. Therefore, {operator} (\ref{conormal-generalized-1-Robin-new1-S}) is well-defined. By Lemma \ref{trace-operator1-S}, the operator $\gamma^{-1}_- $ involved in the right-hand side {of formula} (\ref{conormal-generalized-1-Robin-S}) is bounded. All other involved operators are bounded as well, and, hence, {operator} (\ref{conormal-generalized-1-Robin-new1-S}) is bounded. Moreover, the independence of (\ref{conormal-generalized-1-Robin-new1-S}) of the choice of {the operator $\gamma^{-1}_-$, as in Lemma $\ref{trace-operator1-S}$,} follows by arguments similar to those for \cite[Theorem 3.2]{Mikh}.

{Further}, by (\ref{conormal-generalized-1-Robin-S}), and by continuity of the trace operator from ${\mathcal H}^{1}({\Omega }_{-})^3$ to $H^{\frac{1}{2}}(\partial\Omega )^3$, we {have},
\begin{align}
&\left\langle {\bf t}_0^{-}({\bf u},\pi ;{\tilde{\bf f}}),\gamma_{-}{\bf w}\right\rangle
_{_{\!\partial\Omega  }}=
-2\left\langle {\mathbb E}({\bf u}),{\mathbb E}(\gamma^{-1}_-\gamma_{-}{\bf w})\right\rangle _{\Omega _{-}}+\left\langle \pi,{\rm{div}}(\gamma^{-1}_-\gamma_{-}{\bf w})\right\rangle _{\Omega _{-}}
-\left\langle {\tilde{\bf f}},\gamma^{-1}_-\gamma_{-}{\bf w}\right\rangle _{{\Omega _{-}}}\nonumber\\
&=-2\left\langle {\mathbb E}({\bf u}),{\mathbb E}({\bf w})\right\rangle _{\Omega _{-}}
+\left\langle \pi,{\rm{div}\,}{\bf w}\right\rangle _{\Omega _{-}}
-\left\langle {\tilde{\bf f}},{\bf w}\right\rangle _{{\Omega _{-}}}
-2\left\langle {\mathbb E}({\bf u}),{\mathbb E}(\gamma^{-1}_-\gamma_{-}{\bf w}-{\bf w})\right\rangle _{\Omega _{-}}\nonumber\\
&\hspace{1em}+\left\langle \pi,{\rm{div}}(\gamma^{-1}_-\gamma_{-}{\bf w}-{\bf w})\right\rangle _{\Omega _{-}}-\left\langle {\tilde{\bf f}},\gamma^{-1}_-\gamma_{-}{\bf w}-{\bf w}\right\rangle _{{\Omega _{-}}},
\ \forall \ {\bf w}\in {\mathcal H}^{1}({\Omega }_{-})^3.\label{2.34}
\end{align}
Since $\gamma_-\big(\gamma^{-1}_-\gamma_{-}{\bf w}-{\bf w}\big)={\bf 0}$ on $\partial \Omega $, {we obtain that
$\gamma^{-1}_-\gamma_{-}{\bf w}-{\bf w}\in \mathring{\mathcal H}^{1}({\Omega }_{-})^3,$
where
\begin{equation}\label{ringH}
\mathring{\mathcal H}^{1}(\Omega _{-}):=\overline{{\mathcal D}(\Omega _{-})}^{{\mathcal H}^{1}(\Omega _{-})}.
\end{equation}
$\mathring{\mathcal H}^{1}(\Omega _{-})$ can be characterized as the subspace of ${\mathcal H}^{1}(\Omega _{-})$ with null traces, i.e.,
\begin{align}
\label{null-trace}
\mathring{\mathcal H}^{1}(\Omega _{-})=\left\{{\bf v}_{0}\in {{\mathcal H}^{1}}(\Omega _{-}):\gamma_{-}{{\bf v}}_0={0} \mbox{ on } \partial \Omega \right\}
\end{align}
(see, e.g., \cite[p. 44]{Amrouche-1}, \cite[Theorem 3.29(ii),Theorem 3.33]{Lean}). Then Green's identity (\ref{conormal-generalized-2-Sigma-S}) follows from \eqref{2.34} provided that we can show that
\begin{align}
\label{add-Green}
-2\langle {\mathbb E}({\bf u}),{\mathbb E}({\bf v})\rangle _{\Omega _{-}}+\langle \pi,{\rm{div}}\ {\bf v}\rangle _{\Omega _{-}}\!\!-\!\langle {\tilde{\bf f}},{\bf v}\rangle _{{\Omega _{-}}}=0
\end{align}
for any ${\bf v}\in \mathring{\mathcal H}^{1}({\Omega }_{-})^3.$
By definition \eqref{ringH},} ${\mathcal D}(\Omega _{-})$ is dense in $\mathring{\mathcal H}^{1}(\Omega _{-})$ (cf., e.g., \cite[p. 44]{Amrouche-1}, \cite[Lemma 2.3]{Lang-Mendez}). Therefore, it suffices to show {identity} (\ref{add-Green}) only for functions ${\bf v}\in {\mathcal D}(\Omega _{-})^3$. However, in this case, {formula} (\ref{add-Green}) follows by the membership of $({\bf u},\pi ,{\tilde{\bf f}})$ in the space {${\pmb{ H}}^{1}({\Omega' },{\boldsymbol{\mathcal L}}_{0})$, where ${\Omega'}$ is a bounded Lipschitz domain such that ${\rm supp}({\bf v})\subset \Omega '\subset \Omega _{-}$, and by the Green identity \eqref{conormal-generalized-2}}.
\end{proof}

\section{{Newtonian and layer potential operators for the Stokes and Brinkman systems}}

Let $\alpha >0$ and $\left({\mathcal G}^{\alpha }(\cdot ,\cdot ),{\Pi }^{\alpha }(\cdot ,\cdot )\right)\in {\mathcal D}'({\mathbb R}^3\times {\mathbb R}^3)^{3\times 3}\times {\mathcal D}'({\mathbb R}^3\times {\mathbb R}^3)^3$ be the fundamental solution of the Brinkman system in ${\mathbb R}^3$. Hence,
\begin{equation}
\label{Brinkman-operator4}
(\triangle _{\bf x}-\alpha {\mathbb I}){\mathcal G}^{\alpha }({\bf x},{\bf y}) -\nabla_{\bf x}{\Pi
}^{\alpha }({\bf x},{\bf y}) =-\delta_{\bf y}({\bf x}){\mathbb I},\ \
{\rm{div}}_{\bf x}{\mathcal G}^{\alpha }({\bf x},{\bf y})=0,
\end{equation}
where $\delta _{\bf y}$ is the Dirac distribution with mass at ${\bf y}$. Note that the subscript ${\bf x}$ added to a differential operator indicates that we are differentiating with respect to ${\bf x}$. Let ${\mathcal G}_{ij}^{\alpha }(\cdot ,\cdot )$ be the components of the fundamental tensor ${\mathcal G}^{\alpha }(\cdot ,\cdot )$. Let $\Pi _{j}^{\alpha }(\cdot ,\cdot )$ be the components of the fundamental vector ${\Pi }^{\alpha }(\cdot ,\cdot )$. Then
\begin{equation}
\label{5.4.21}
\begin{array}{c}
{\mathcal G}_{jk}^{\alpha }({\bf x})=
\displaystyle\frac{1}{8\pi }\left\{\displaystyle\frac{\delta_{jk}}{|{\bf x}|}A_1\left(\alpha ^{\frac{1}{2}}|{\bf x}|\right )+\displaystyle\frac{x_j x_k}{|{\bf x}|^3}A_2\left(\alpha ^{\frac{1}{2}}|{\bf x}|\right )\right\},\
\Pi _j^{\alpha }({\bf x})={\Pi_j({\bf x})}=\displaystyle\frac{1}{4\pi }\displaystyle\frac{x_j}{|{\bf x}|^3},
\end{array}
\end{equation}
where
\begin{equation}
\label{5.4.22}
A_1(z)=2e^{-z}(1+z^{-1}+z^{-2})-2z^{-2},\
A_2(z)=-2e^{-z}(1+3z^{-1}+3z^{-2})+6z^{-2}
\end{equation}
(cf.. e.g., \cite{McCracken1981}, \cite[Chapter 2]{24} and \cite[Section 3.2.1]{12}).

In addition, the components of the stress tensor ${\bf S}^{\alpha }(\cdot ,\cdot )$ are given in view of (\ref{5.4.21}) and (\ref{5.4.22}) by
\begin{align}
\label{fundamental-solution-Brinkman2-new}
S^{\alpha }_{jk\ell }({\bf x})&:=-{\Pi} _k({\bf x})\delta _{j\ell } +\frac{\partial {\mathcal G}^{\alpha }_{jk}({\bf x})}{\partial x_{_{\!\ell }}}+\frac{\partial {\mathcal G}^{\alpha }_{\ell k}({\bf x})}{\partial x_j}\nonumber\\
%\label{5.4.25}
&=-\displaystyle\frac{1}{4\pi }\left\{\delta_{k\ell }
\frac{x_j}{|{\bf x}|^3}D_1\left(\alpha ^{\frac{1}{2}}|{\bf x}|\right )\!+\!
\left(\delta_{jk}\frac{x_\ell }{|{\bf x}|^3}\!+\!\delta_{j\ell }\frac{x_k}{|{\bf x}|^3}\right)D_2\left(\alpha ^{\frac{1}{2}}|{\bf x}|\right )\!+\!\frac{x_j x_k x_\ell }{|{\bf x}|^5}D_3(\alpha ^{\frac{1}{2}}|{\bf x}|)\right\},
\end{align}
where
\begin{align*}
%\label{5.4.26}
%\begin{array}{lll}
D_1(z)&=2e^{-z}(1+3z^{-1}+3z^{-2})-6z^{-2}+1,\quad
D_2(z)=e^{-z}(z+3+6z^{-1}+6z^{-2})-6z^{-2},\\
D_3(z)&=e^{-z}(-2z-12-30z^{-1}-30z^{-2})+30z^{-2}
%\end{array}
\end{align*}
(see also, e.g., \cite{McCracken1981}, \cite[Chapter 2]{24}, \cite{Varnhorn} and \cite[Section 3.2.1]{12}). Let ${\Lambda }^{\alpha }$ denote the fundamental pressure tensor {whose components ${\Lambda }^{\alpha }_{jk}$ are given by
\begin{equation}
\label{pressure-tensor-B}
{\Lambda }^{\alpha }_{jk}({\bf x})=\displaystyle\frac{1}{4\pi }\left\{\frac{\delta_{jk}}{|{\bf x}|^3}\left(\alpha |{\bf x}|^2-2\right ) +6\frac{x_jx_k}{|{\bf x}|^5}\right\}
\end{equation}
(see \cite[(3.6.15)]{12}).} Note that, for ${\bf x}\neq {\bf y}$,
\begin{equation}
\label{Brinkman-operator6}
\triangle _{\bf x}S_{jk\ell}^{\alpha
}({\bf y},{\bf x}) -\alpha S_{jk\ell}^{\alpha }({\bf y},{\bf
x})-\frac{\partial \Lambda _{j\ell }^{\alpha }({\bf x},{\bf
y})}{\partial x_k}=0,\ \frac{\partial S_{jk\ell}^{\alpha }({\bf
y},{\bf x})}{\partial x_k}=0.
\end{equation}

%{ *** SM 12/4: Explicit expressions for $){\mathcal G}^{\alpha}$ and ${\Pi}^{\alpha}$ would be useful.}\\

%For a further description of $({\mathcal
%G}^{\alpha },{\Pi }^{\alpha })$ and $({\bf S}^{\alpha },{\Lambda
%}^{\alpha })$ we refer the reader to \cite[Chapter 2]{24} and \cite[Section 3.2.1]{12}.

For $\alpha =0$ we obtain the fundamental solution of the Stokes
system. {Henceforth}, we use the notation $\left({\mathcal G}(\cdot ,\cdot
),{\Pi }(\cdot ,\cdot )\right)\in {\mathcal D}'({\mathbb R}^3\times {\mathbb R}^3)^{3\times 3}\times {\mathcal D}'({\mathbb R}^3\times {\mathbb R}^3)^3$ for such a
fundamental solution, which satisfies {equations} (\ref{Brinkman-operator4}) with $\alpha =0$. In addition, the components of ${\mathcal G}(\cdot ,\cdot )$ and ${\Pi }(\cdot ,\cdot )$ are given by (see, e.g., \cite[p. 38, 39]{12}):
\begin{equation}
\begin{array}{lll}
\label{fundamental-solution-Stokes} {\mathcal G}_{jk}({\bf x})=
\displaystyle\frac{1}{8\pi }\left\{\frac{\delta
_{jk}}{|{\bf x}|}+ \frac{x_jx_k}{|{\bf
x}|^3}\right\},\quad \
\Pi _{j}({\bf x})=\frac{1}{4\pi }\frac{x_j}{|{\bf x}|^3}.
\end{array}
\end{equation}
The stress and pressure tensors ${\bf S}$ and ${\Lambda }$ have the components (see, e.g., \cite[pp. 39, 132]{12}):
\begin{align}
\label{fundamental-solution-Brinkman2}
S_{jk\ell }({\bf x})=-\displaystyle\frac{3}{4\pi }\displaystyle\frac{{x}_j{x}_k{x}_{\ell }}{|{\bf x}|^{5}},\ \Lambda _{jk}({\bf x})=\frac{1}{2\pi }\left(-\displaystyle\frac{\delta _{jk}}{|{\bf x}|^3}
+3\displaystyle\frac{{x}_j{x}_k}{|{\bf x}|^{5}}\right).
\end{align}
{Let further on, $ {\mathcal G}^{\alpha }({\bf x}, {\bf y})={\mathcal G}^{\alpha }({\bf x}-{\bf y})$ and  $ {\Pi}^{\alpha }({\bf x}, {\bf y})={\Pi}({\bf x}-{\bf y})$.}
For ${\boldsymbol{\varphi}}\in \mathcal D(\mathbb R^3)^3$, $\alpha\ge 0$, the Newtonian velocity and pressure potentials, for the Brinkman system, are, respectively, defined as
\begin{align}
\label{NoT}
\big({\boldsymbol{\mathcal N}}_{{\alpha ; \mathbb R^3 }}{\boldsymbol{\varphi}}\big)({\bf x}):=
-\big\langle {\mathcal G}^{\alpha }({\bf x}, \cdot),{\boldsymbol{\varphi}}
\big\rangle_{_{\!{ \mathbb R^3 } }}=
-\int_{\mathbb R^3} {\mathcal G}^{\alpha }({\bf x}, {\bf y}){\boldsymbol{\varphi}}({\bf y})\, d{\bf y},\\
\label{QoT}
\big({\mathcal Q}_{{\alpha ;\mathbb R^3}}{\boldsymbol{\varphi}}\big)({\bf x})
{=\big({\mathcal Q}_{\mathbb R^3}{\boldsymbol{\varphi}}\big)({\bf x})}
 :=
-\big\langle{\Pi }({\bf x},\cdot),{\boldsymbol{\varphi}}\big\rangle _{_{\!{ \mathbb R^3 } }}=
-\int_{\mathbb R^3} {\Pi}({\bf x}, {\bf y}){\boldsymbol{\varphi}}({\bf y})\, d{\bf y},\ {\bf x} \in {\mathbb R}^3
\end{align}
and the operators
${\boldsymbol{\mathcal N}}_{\alpha ;\mathbb R^3}:{\mathcal D}(\mathbb R^3)^3\to {\mathcal E}(\mathbb R^3)^3$,
${\mathcal Q}_{{\alpha ;\mathbb R^3}}:{\mathcal D}(\mathbb R^3)^3\to {\mathcal E}(\mathbb R^3)$ are evidently continuous.
{Note that due to \eqref{Brinkman-operator4}, we have the relations
\begin{equation}
\label{Newtonian-s1-sm}
\triangle ({\boldsymbol{\mathcal N}}_{\alpha ;\mathbb R^3}{\bf f})-\alpha {\boldsymbol{\mathcal N}}_{\alpha ;\mathbb R^3}{\bf f}-\nabla ({\mathcal Q}_{\alpha ;\mathbb R^3}{\bf f})={\bf f},\ \
{\rm{div}}{\boldsymbol{\mathcal N}}_{\alpha ;\mathbb R^3}{\bf f}=0 \mbox{ in } \mathbb R^3.
\end{equation}
Let $r_{\Omega_{\pm}}$ be the operators {restricting} vector-valued or scalar-valued distributions in ${\mathbb R}^3$ to $\Omega_{\pm}$.
{When} ${\boldsymbol{\mathcal N}}_{\alpha ;{\mathbb R}^3}{\bf f}$ and ${\mathcal Q}_{\alpha ;{\mathbb R}^3}{\bf f}$ are well defined on $\mathbb R^3$, we can also define their restrictions to $\Omega_\pm$ as
\begin{align}
\label{Newtonian-D-B-F-S-M}
{\boldsymbol{\mathcal N}}_{\alpha ;\Omega _{\pm}}{\bf f}:=r_{\Omega _{\pm}}\big({\boldsymbol{\mathcal N}}_{\alpha ;{\mathbb R}^3}{\bf f}\big),\ \
{\mathcal Q}_{\alpha ;{\Omega }_{\pm}}{\bf f}{={\mathcal Q}_{{\Omega }_{\pm}}{\bf f}:=r_{\Omega _{\pm}}\big({\mathcal Q}_{{\mathbb R}^3}{\bf f}\big)}.
\end{align}

We use the notation ${\boldsymbol{\mathcal N}}_{\mathbb R^3}:={\boldsymbol{\mathcal N}}_{0;\mathbb R^3}$
%, ${\mathcal Q}_{\mathbb R^3}:={\mathcal Q}_{0;\mathbb R^3}$
{and} similar ones when $\mathbb R^3$ is replaced by $\Omega _{\pm }$.

{Definitions \eqref{NoT} and \eqref{QoT} can be extended to Sobolev spaces, and the mapping properties of the corresponding operators are proved in  Lemmas \ref{Newtonian-weight} and \ref{Newtonian-B}.}

%\subsection*{\bf Layer potentials for the Stokes and Brinkman systems}
Now let ${\bf g}\in H^{-\frac{1}{2}}({ \partial\Omega })^3$. Then the single-layer potential ${\bf V}_{{\alpha ;\partial\Omega }}{\bf g}$ and the pressure potential ${\mathcal Q}_{{\alpha ;\partial\Omega }}^s{\bf g}$ for the Brinkman system are given by
\begin{equation}
\label{58} \big({\bf V}_{{\alpha ; \partial\Omega }}{\bf g}\big)({\bf x})
:=\big\langle {\mathcal G}^{\alpha }({\bf x}, \cdot),{\bf g}
\big\rangle_{_{\!{ \partial\Omega } }},\
\big({\mathcal Q}_{{\alpha ; \partial\Omega}}^s{\bf g}\big)({\bf x})
{=\big({\mathcal Q}_{{\partial\Omega}}^s{\bf g}\big)({\bf x}) }
:=\big\langle{\Pi }({\bf x},\cdot),{\bf g}\big\rangle _{_{\!{ \partial\Omega } }},\
{\bf x} \in {\mathbb R}^3 \setminus { \partial\Omega } ,
\end{equation}
{and the corresponding non-tangential limiting values satisfy the relations (see \eqref{68} {in Lemma~\ref{layer-potential-properties}})
\begin{align}
%\label{68}
&\gamma_+\big({\bf V}_{\alpha ;\partial\Omega }{\bf g}\big)
=\gamma_{-}\big({\bf V}_{\alpha
;{ \partial\Omega } }{\bf g}\big)
=:{\boldsymbol{\mathcal V}}_{\alpha ; \partial\Omega }{\bf g}.\nonumber
\end{align}}
Let $\nu _{_{\ell }}$, $\ell =1,\ldots ,n$, be the components of the
outward unit normal $\nu $ to $\Omega $, which is defined a.e. on ${ \partial\Omega } $. Let
${\bf h}\in H^{\frac{1}{2}}(\partial\Omega )^3$. Then the double-layer
potential ${\bf W}_{\alpha ;\partial\Omega }{\bf h}$ and its associated
pressure potential ${\mathcal Q}_{\alpha ;{ \partial\Omega } }^d{\bf h}$ for the Brinkman system are
given at any ${\bf x} \in {\mathbb R}^3 \setminus { \partial\Omega } $ by\footnote{The repeated index summation convention is adopted everywhere in the paper.}
\begin{equation}
\label{59}
\!\!\!\!\big(\!{\bf W}_{\alpha ;\partial\Omega }{\bf h}\big)_k({\bf x})
\!:=\!\!\!\displaystyle\int_{\partial\Omega }\!\!\!\!\!S_{jk{\ell}}^{\alpha }({\bf y},{\bf x})\nu
_{_{\!{\ell }}}({\bf y})h_j({\bf y})\!d\sigma_{{\bf y}},\,
\big(\!{\mathcal Q}_{\alpha ;\partial\Omega }^d{\bf h}\big)({\bf x})
\!:=\!\!\!\displaystyle\int_{\partial\Omega }\!\!\!\!\Lambda _{j{\ell}}^{\alpha }({\bf x},{\bf y})\nu
_{_{{\ell }}}({\bf y})h_j({\bf y})\!d\sigma_{{\bf y}}.
\end{equation}
In addition, the principal value of ${\bf W}_{\alpha ;{ \partial\Omega } }{\bf h}$ is denoted by
\begin{equation}
\label{double-layer-principal-value} ({\bf K}_{\alpha ;{ \partial\Omega } }{\bf
h})_k({\bf x}):= {\rm{p.v.}}\int_{\partial\Omega }S_{jk{\ell}}^{\alpha
}({\bf y},{\bf x})\nu _{_{\!{\ell }}}({\bf y})h_j({\bf
y})d\sigma_{{\bf y}}\ \mbox { a.e. } {\bf x}\in { \partial\Omega },
\end{equation}
{and the corresponding conormal derivatives are related by the formula (see \eqref{70aaaa} {in Lemma~\ref{layer-potential-properties}})
\begin{align}
&{\bf t}_{\alpha }^{+}\big({\bf W}_{\alpha ;\partial\Omega }{\bf h},{\mathcal Q}_{\alpha ; \partial\Omega }^d{\bf h}\big)
={\bf t}_{\alpha }^{-}\big({\bf W}_{\alpha ;\partial\Omega }{\bf h},{\mathcal Q}_{\alpha ;\partial\Omega }^d{\bf h}\big)
=:{\bf D}_{\alpha ;\partial\Omega }{\bf h}.\nonumber
\end{align}}
For $\alpha =0$, i.e., in {case} of the Stokes system, we
use the following abbreviations: $${\bf V}_{{ \partial\Omega } }\!:=\!{\bf V}_{0;{ \partial\Omega } },\ {\boldsymbol{\mathcal V}}_{{ \partial\Omega } }\!:=\!{\boldsymbol{\mathcal V}}_{0;{ \partial\Omega } },\ {\bf W}_{{ \partial\Omega }
}\!:=\!{\bf W}_{0;{ \partial\Omega } },\ {\bf K}_{{ \partial\Omega } }\!:=\!{\bf
K}_{0;{ \partial\Omega } },\ {\bf D}_{{ \partial\Omega } }\!:=\!{\bf D}_{0;{ \partial\Omega } }.$$
By (\ref{Brinkman-operator4}) and (\ref{Brinkman-operator6}), $({\bf V}_{\alpha ;\partial\Omega }{\bf
g},{\mathcal Q}_{\partial\Omega }^s{\bf g})$ and $({\bf W}_{\alpha ;\partial\Omega }{\bf h},{\mathcal Q}_{\alpha ;{ \partial\Omega }}^d{\bf h})$ satisfy the Brinkman system in ${\mathbb R}^3\setminus \partial\Omega $.

{The main properties of the layer potential operators for the Stokes system ($\alpha=0$), as well as for the Brinkman system ($\alpha >0$), are {provided in Lemmas  \ref{layer-potential-properties-Stokes} and} \ref{layer-potential-properties}.}

\section{Poisson problem of transmission type for the Stokes and Brinkman systems in complementary Lipschitz domains in weighted Sobolev spaces}
Let $\Omega _{+}:={\Omega }\subset {\mathbb R}^3$ be a bounded Lipschitz domain with connected boundary $\partial \Omega $ and let ${\Omega }_{-}:={\mathbb R}^3\setminus \overline{\Omega }$. Let $\nu $ be the outward unit normal to $\partial {\Omega }$. Consider the spaces
\begin{align}
\label{div-s1}
&H_{{\rm{div}}}^{1}({\Omega }_{+})^3:=\left\{{\bf w}\in H^{1}({\Omega }_{+})^3:{\rm{div}}\ {\bf w}=0 \mbox{ in } \Omega _{+}\right\},
\\
\label{div-2}
&{\mathcal H}_{{\rm{div}}}^{1}({\Omega }_{-})^3:=\left\{{\bf w}\in {\mathcal H}^{1}({\Omega }_{-})^3:{\rm{div}}\ {\bf w}=0 \mbox{ in } \Omega _{-}\right\},\\
\label{space}
&{\mathcal X}:=H^{1}_{\rm div}({\Omega }_{+})^3\times L^2(\Omega _{+})\times {\mathcal H}^{1}_{\rm div}({\Omega }_{-})^3\times L^2(\Omega _{-}),\\
\label{space-data}
&{\mathcal Y}:=\widetilde{H}^{-1}({\Omega }_{+})^3\times \widetilde{\mathcal H}^{-1}({\Omega }_{-})^3\times H^{\frac{1}{2}}(\partial\Omega )^3\times H^{-\frac{1}{2}}(\partial\Omega )^3,\\
\label{space-data-inf}
&{\mathcal Y}_\infty:=\widetilde{H}^{-1}({\Omega }_{+})^3\times \widetilde{\mathcal H}^{-1}({\Omega }_{-})^3\times H^{\frac{1}{2}}(\partial\Omega )^3\times H^{-\frac{1}{2}}(\partial\Omega )^3
\times\mathbb R^3,
\end{align}
{where the norms on the first two spaces are inherited from their parent spaces, $H^{1}({\Omega }_{+})^3$ and $\mathcal H^{1}({\Omega }_{-})^3$, respectively, while the norms in the last three spaces are defined as the sum of the corresponding norms of their components.}

%{\bl *** SM: To facilitate the operator notations in the non-linear part, the space $\mathcal Y$ %definition were changed, adding subscript div, as appropriate, and the incompressibility equations were %removed in all formulations of the transmission problems below.}

Next we consider the Poisson problem of transmission type for the {incompressible} Stokes and Brinkman systems in the complementary Lipschitz domains $\Omega _{\pm }$,
\begin{equation}
\label{Poisson-Brinkman-a-Besov-Dirichlet}
\left\{\begin{array}{lll}
\triangle {\bf u}_{+}-\alpha {\bf u}_{+}-\nabla \pi _{+}={{\tilde{\bf f}}_{+}|_{\Omega _{+}}}\
%{\rm{div}}\ {\bf u}_{+}=0\
\mbox{ in }\ {\Omega }_{+},\\
%\triangle {\bf u}_{-}-\nabla \pi _{-}={\bf 0},\
\triangle {\bf u}_{-}-\nabla \pi _{-}={\tilde{\bf f}}_{-}|_{\Omega _{-}}\
%{\rm{div}}\ {\bf u}_{-}=0\
\mbox{ in }\ {\Omega }_{-},\\
\gamma_{+}{\bf u}_{+}-\gamma_{-}{\bf u}_{-}={\bf h}_0 \mbox{ on } \partial\Omega ,\\
{\bf t}_{\alpha }^{+}({\bf u}_{+},\pi _{+};{\tilde{\bf f}}_{+})-
\mu {\bf t}_{0}^{-}({\bf u}_{-},\pi _{-};{\tilde{\bf f}}_{-})+\frac{1}{2}{\mathcal P}\, \left(\gamma_{-}{\bf u}_{-}+\gamma_{+}{\bf u}_{+}\right)
={\bf g}_{0} \mbox{ on } \partial\Omega ,
%\\
%{\bf u}_{-}({\bf x})-{\bf u}_{\infty }\to 0,\  \pi _{-}({\bf x})\to 0 \mbox{ as } |{\bf x}|\to \infty
\end{array}\right.
\end{equation}
where $\mu >0$ is a constant and ${\mathcal P}\in L^{\infty }({\partial\Omega })^{3\times 3}$ is a symmetric matrix-valued function which satisfies the positivity condition
\begin{equation}
\label{positive-definite-lambda}
\langle {\mathcal P}{\bf v},{\bf v}\rangle _{_{\partial\Omega }}\geq 0,\ \forall \ {\bf v}\in L^2(\partial \Omega )^3.
\end{equation}
%{\rd 23/07/2015 SM: I removed the footnote here since its statement seems to be self-evident.***}
The transmission conditions on $\partial\Omega$ are a generalization of the ones considered, e.g., in \cite{Ochoa_Tapia-Whitacker1995-1}, \cite{Ochoa_Tapia-Whitacker1995-2}, and
the constant $\mu$ is the ratio of viscosity coefficients in $\Omega_+$ and $\Omega_-$, by which the equations are normalized. {\it Note that the conormal derivative ${\bf t}_{0}^{-}({\bf u}_{-},\pi _{-};{\tilde{\bf f}}_{-})$ in the last transmission condition in \eqref{Poisson-Brinkman-a-Besov-Dirichlet} is well defined due to Remark $\ref{CSD}$}.

We will look for a solution $\left({\bf u}_{+},\pi _{+},{\bf u}_{-},\pi _{-}\right)$ of {the transmission problem} \eqref{Poisson-Brinkman-a-Besov-Dirichlet} satisfying
\begin{align}\label{udiff}
\left({\bf u}_{+},\pi _{+},{\bf u}_{-}-{\bf u}_{\infty },\pi _{-}\right)&\in {\mathcal X}
\end{align}
for a given constant vector ${\bf u}_{\infty }\in {\mathbb R}^3$, and show the well-posedness of {the transmission problem} \eqref{Poisson-Brinkman-a-Besov-Dirichlet}, \eqref{udiff}.
Let us start from uniqueness.
\begin{lem}
\label{well-posedness-Poisson-Besov-Dirichlet-L}
Let $\Omega _{+}:={\Omega }\subset {\mathbb R}^3$ be a bounded Lipschitz domain with connected boundary $\partial \Omega $. Let ${\Omega }_{-}:={\mathbb R}^3\setminus \overline{\Omega }$ denote the corresponding exterior Lipschitz domain. Let $\alpha ,\mu >0$ and condition \eqref{positive-definite-lambda} hold.
Then for $\big({\tilde{\bf f}}_{+},{\tilde{\bf f}}_{-},{\bf h}_0,{\bf g}_{0},{\bf u}_{\infty }\big)\in {\mathcal Y}_\infty$, {the transmission problem} \eqref{Poisson-Brinkman-a-Besov-Dirichlet}
has at most one solution $\left({\bf u}_{+},\pi _{+},{\bf u}_{-},\pi _{-}\right)$ satisfying condition \eqref{udiff}.
\end{lem}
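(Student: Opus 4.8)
The plan is to establish uniqueness in the standard way for such transmission problems: by linearity it suffices to show that the only solution with zero data $\big({\tilde{\bf f}}_{+},{\tilde{\bf f}}_{-},{\bf h}_0,{\bf g}_{0},{\bf u}_{\infty }\big)=\mathbf 0$ is the trivial one. So assume $\left({\bf u}_{+},\pi _{+},{\bf u}_{-},\pi _{-}\right)\in{\mathcal X}$ solves \eqref{Poisson-Brinkman-a-Besov-Dirichlet} with all data zero. First I would apply the first Green identity \eqref{conormal-generalized-2} in $\Omega_+$ with ${\bf w}_+={\bf u}_+$ and with $\tilde{\bf f}_+=\mathbf 0$: since ${\rm div}\,{\bf u}_+=0$ and ${\boldsymbol{\mathcal L}}_{\alpha}({\bf u}_+,\pi_+)=\mathbf 0$, this gives
\begin{equation*}
\left\langle {\bf t}_{\alpha}^{+}({\bf u}_{+},\pi _{+}),\gamma_{+}{\bf u}_{+}\right\rangle _{\partial\Omega}=2\|{\mathbb E}({\bf u}_{+})\|_{L^2(\Omega_+)}^2+\alpha\|{\bf u}_+\|_{L^2(\Omega_+)}^2.
\end{equation*}
Similarly, I would apply the Green identity \eqref{conormal-generalized-2-Sigma-S} for the exterior Stokes problem in $\Omega_-$ with ${\bf w}={\bf u}_-$ (note ${\bf u}_-\in{\mathcal H}^1(\Omega_-)^3$ since ${\bf u}_\infty=\mathbf 0$) and $\tilde{\bf f}_-=\mathbf 0$, obtaining
\begin{equation*}
-\left\langle {\bf t}_0^{-}({\bf u}_{-},\pi _{-}),\gamma_{-}{\bf u}_{-}\right\rangle _{\partial\Omega}=2\|{\mathbb E}({\bf u}_{-})\|_{L^2(\Omega_-)}^2.
\end{equation*}

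Next I would combine these by using the transmission conditions. With ${\bf h}_0=\mathbf 0$ we have $\gamma_+{\bf u}_+=\gamma_-{\bf u}_-=:\boldsymbol\varphi$ on $\partial\Omega$, and with ${\bf g}_0=\mathbf 0$ the fourth condition reads ${\bf t}_{\alpha}^{+}({\bf u}_{+},\pi _{+})=\mu{\bf t}_0^{-}({\bf u}_{-},\pi _{-})-{\mathcal P}\boldsymbol\varphi$. Therefore, multiplying the exterior identity by $\mu>0$ and adding,
\begin{equation*}
2\|{\mathbb E}({\bf u}_{+})\|_{L^2(\Omega_+)}^2+\alpha\|{\bf u}_+\|_{L^2(\Omega_+)}^2+2\mu\|{\mathbb E}({\bf u}_{-})\|_{L^2(\Omega_-)}^2
=\left\langle {\bf t}_{\alpha}^{+}({\bf u}_{+},\pi _{+})-\mu{\bf t}_0^{-}({\bf u}_{-},\pi _{-}),\boldsymbol\varphi\right\rangle_{\partial\Omega}
=-\left\langle {\mathcal P}\boldsymbol\varphi,\boldsymbol\varphi\right\rangle_{\partial\Omega}\le 0
\end{equation*}
by the positivity hypothesis \eqref{positive-definite-lambda}. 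Since every term on the left is nonnegative, all of them vanish: ${\mathbb E}({\bf u}_+)=\mathbf 0$ in $\Omega_+$, ${\bf u}_+=\mathbf 0$ in $\Omega_+$ (here $\alpha>0$ is essential), and ${\mathbb E}({\bf u}_-)=\mathbf 0$ in $\Omega_-$.

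From ${\bf u}_+=\mathbf 0$ the first PDE gives $\nabla\pi_+=\mathbf 0$, so $\pi_+$ is constant on $\Omega_+$ (which is connected). For the exterior part, ${\mathbb E}({\bf u}_-)=\mathbf 0$ forces ${\bf u}_-$ to be an infinitesimal rigid motion ${\bf u}_-({\bf x})={\bf a}+{\bf b}\times{\bf x}$; but ${\bf u}_-\in{\mathcal H}^1(\Omega_-)^3\hookrightarrow L^6(\Omega_-)$ by \eqref{weight-4}, and a nonzero rigid motion is not in $L^6$ of an exterior domain, hence ${\bf u}_-=\mathbf 0$ in $\Omega_-$, and then the exterior Stokes equation gives $\nabla\pi_-=\mathbf 0$, so $\pi_-$ is constant on each connected component of $\Omega_-$; since $\partial\Omega$ is connected, $\Omega_-$ is connected, and moreover $\pi_-\in L^2(\Omega_-)$ on an unbounded domain forces that constant to be zero, so $\pi_-=0$. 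Finally I would fix the remaining constant for $\pi_+$: with ${\bf u}_+={\bf u}_-=\mathbf 0$ and $\pi_-=0$, the boundary traction from $\Omega_+$ is ${\bf t}_{\alpha}^{+}(\mathbf 0,\pi_+)=-\gamma_+(\pi_+{\mathbb I})\nu=-\pi_+\nu$ while ${\bf t}_0^-(\mathbf 0,0)=\mathbf 0$, so the fourth transmission condition (with ${\bf g}_0=\mathbf 0$, $\boldsymbol\varphi=\mathbf 0$) gives $-\pi_+\nu=\mathbf 0$ on $\partial\Omega$, forcing $\pi_+=0$. This proves $\left({\bf u}_{+},\pi _{+},{\bf u}_{-},\pi _{-}\right)=\mathbf 0$ and hence uniqueness. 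The only delicate points to watch are the justification that the Green identities \eqref{conormal-generalized-2} and \eqref{conormal-generalized-2-Sigma-S} apply with the canonical conormal derivatives (guaranteed here because the data vanish, cf. Remark \ref{Rm2.4}) and the exclusion of nontrivial rigid motions in the exterior domain via the embedding \eqref{weight-4}; these are exactly the places where the weighted-space setting and the sign conditions $\alpha,\mu>0$, \eqref{positive-definite-lambda} are used, so I expect no serious obstacle beyond bookkeeping.
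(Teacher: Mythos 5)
Your proof is correct and its core is the same energy argument as the paper's: take the difference of two solutions (which, by the linearity of the conormal derivative in Remark \ref{2.3}, solves the homogeneous problem with canonical conormal derivatives), apply the Green identities \eqref{conormal-generalized-2} and \eqref{conormal-generalized-2-Sigma-S}, combine them through the transmission conditions, and use \eqref{positive-definite-lambda} to force ${\bf u}_+={\bf 0}$ and ${\mathbb E}({\bf u}_-)={\bf 0}$. You diverge only in how you finish off the exterior part: the paper deduces $\gamma_-{\bf u}_-={\bf 0}$ from the trace transmission condition and then invokes the uniqueness of the exterior Dirichlet problem for the Stokes system in ${\mathcal H}^1(\Omega_-)^3\times L^2(\Omega_-)$ (citing \cite[Theorem 3.4]{Gi-Se}), whereas you argue directly that ${\mathbb E}({\bf u}_-)={\bf 0}$ makes ${\bf u}_-$ a rigid motion, which the embedding \eqref{weight-4} into $L^6(\Omega_-)$ rules out unless it is zero, and then kill the constant $\pi_-$ by the $L^2$ condition on the unbounded connected set $\Omega_-$. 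Your route is more elementary and self-contained (it does not even need the trace condition at this stage), at the cost of redoing by hand what the cited exterior uniqueness theorem packages; both are valid, and your final step recovering $\pi_+=0$ from the traction transmission condition matches the paper's.
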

\begin{proof}
Let us assume that $\left({\bf u}_{+}^0,\pi _{+}^0,{\bf u}_{-}^0,\pi _{-}^0\right)$ is the difference of two solutions of (\ref{Poisson-Brinkman-a-Besov-Dirichlet}) satisfying \eqref{udiff}.
Then $\left({\bf u}_{+}^0,\pi _{+}^0,{\bf u}_{-}^0,\pi _{-}^0\right)$ belongs to ${\mathcal X}$ and satisfies the homogeneous version of \eqref{Poisson-Brinkman-a-Besov-Dirichlet}.
We now apply the Green formula \eqref{conormal-generalized-2} to
$\left({\bf u}_{+}^0,\pi _{+}^0\right)\in H^{1}_{\rm{div}}({\Omega }_{+})^3\times L^2(\Omega _{+})$ in the Lipschitz domain $\Omega _{+}$, and obtain
\begin{align}
\label{Green-sum-S}
\langle {\bf t}_{\alpha }^+({\bf u}_{+}^0,\pi _{+}^0),\gamma_{+}{\bf u}_{+}^0\rangle_{\partial \Omega }=2\|{\mathbb E}({\bf u}_{+}^0)\|_{L^2(\Omega _+)^{3\times 3}}^2+ \alpha \|{\bf u}_{+}^0\|_{L^2(\Omega _{+})^3}^2,
\end{align}
where ${\mathbb E}({\bf u}_{+}^0)$ is the symmetric part of $\nabla {\bf u}_{+}^0$.
By applying the Green formula (\ref{conormal-generalized-2-Sigma-S}) to the pair $\left({\bf u}_{-}^0,\pi _{-}^0\right)\in {\mathcal H}^{1}_{\rm{div}}({\Omega}_{-})^3\times L^2(\Omega _{-})$ in the exterior domain ${\Omega }_{-}$, we obtain that
\begin{align}
\label{Green-sum-1-S}
-\mu \langle {\bf t}_{0}^{-}({\bf u}_{-}^0,\pi _{-}^0),\gamma_{-}{\bf u}_{-}^0\rangle _{\partial \Omega }=2\mu \|{\mathbb E}({\bf u}_{-}^0)\|_{L^2(\Omega _{-})^{3\times 3}}^2.
\end{align}
By adding {formulas} (\ref{Green-sum-S}) and (\ref{Green-sum-1-S}) and by exploiting the homogeneous transmission conditions satisfied by the pairs $\left({\bf u}_{+}^0,\pi _{+}^0\right)$ and $\left({\bf u}_{-}^0,\pi _{-}^0\right)$, we obtain the equality
\begin{align}
\label{Green-sum-2-S}
2\|{\mathbb E}({\bf u}_{+}^0)\|_{L^2(\Omega _{+})^{3\times 3}}^2&+\alpha \|{\bf u}_{+}^0\|_{L^2(\Omega _{+})^3}^2+2\mu \|{\mathbb E}({\bf u}_{-}^0)\|_{L^2({\Omega }_{-})^{3\times 3}}^2\nonumber\\
&=\left\langle {\bf t}_{\alpha }^+({\bf u}_{+}^0,\pi _{+}^0)-\mu {\bf t}_{0}^{-}({\bf u}_{-}^0,\pi _{-}^0), \gamma_{-}{\bf u}_{-}^0\right \rangle _{\partial \Omega }
%\nonumber\\
%&
=-\left\langle {\mathcal P}\, \gamma_{-}{\bf u}_{-}^0, \gamma_{-}{\bf u}_{-}^0\right \rangle _{\partial \Omega },
\end{align}
where the left-hand side is non-negative, while the right-hand side is {non-positive due to
{the positivity condition} (\ref{positive-definite-lambda}) satisfied by the matrix-valued function ${\mathcal P}$}. Therefore, both hand sides vanish, and hence
\begin{equation}
\label{S-1}
{\bf u}_{+}^0={\bf 0} \mbox{ in } \Omega _{+},\ \ {\mathbb E}({\bf u}_{-}^0)=0 \mbox{ in } {\Omega }_{-}.
\end{equation}
Thus, $\pi _{+}^0=c\in {\mathbb R}$ in $\Omega _{+}$. In addition, the first relation in (\ref{S-1}) and the transmission condition $\gamma_{-}{\bf u}_{-}^0=\gamma_{+}{\bf u}_{+}^0$ on $\partial \Omega $ yield
\begin{equation}
\label{S-2}
\gamma_{-}{\bf u}_{-}^0={\bf 0} \mbox{ on } \partial\Omega .
\end{equation}
Hence, $({\bf u}_{-}^0,\pi _{-}^0)$ is a solution in the space ${\mathcal H}^{1}(\Omega _{-})^3\times L^2(\Omega _{-})$ of the exterior Dirichlet problem for the homogeneous Stokes system with homogeneous Dirichlet boundary condition. {By} {\cite[Theorem 3.4]{Gi-Se} (see also \cite[Theorem 2.1]{Al-Am}), the solution of such a problem is unique, i.e.,} {${\bf u}_{-}^0={\bf 0}$, $\pi _{-}^0=0$ in $\Omega _{-}$}. Then
$$
\gamma_{-}{\bf u}_{-}^0=\gamma_{+}{\bf u}_{+}^0,\, {\bf t}_{\alpha }^{+}({\bf u}_{+}^0,\pi _{+}^0)-\mu {\bf t}_{0}^{-}({\bf u}_{-}^0,\pi _{-}^0)+\frac{1}{2}{\mathcal P}\, \left(\gamma_{-}{\bf u}_{-}^0+\gamma_{+}{\bf u}_{+}^0\right)={\bf 0}
$$
%on $\partial\Omega $,
and (\ref{S-2}) yield that $\pi ^0_{+}=0$ in $\Omega _{+}$. Consequently,
%\begin{align}
%\label{S-3}
${{\bf u}_{\pm }^0={\bf 0},\ \pi _{\pm }^0=0\ \mbox{ in } \ {\Omega }_{\pm }}.$
%\end{align}
%and hence {problem} (\ref{Poisson-Brinkman-a-Besov-Dirichlet}){, \eqref{udiff}} has at most one {solution}.
\end{proof}
Next we show the existence of solution of transmission problem (\ref{Poisson-Brinkman-a-Besov-Dirichlet}), \eqref{udiff}
%and hence its well-posedness
in the case ${\bf u}_{\infty }={\mathbf 0}$.
\begin{thm}
\label{well-posedness-Poisson-Besov-Dirichlet0}
Let $\Omega _{+}:={\Omega }\subset {\mathbb R}^3$ be a bounded Lipschitz domain with connected boundary $\partial \Omega $. Let ${\Omega }_{-}:={\mathbb R}^3\setminus \overline{\Omega }$ be the exterior Lipschitz domain. Let $\alpha ,\mu >0$ and condition \eqref{positive-definite-lambda} hold.
Then for $\big({\tilde{\bf f}}_{+},{\tilde{\bf f}}_{-},{\bf h}_0,{\bf g}_{0}\big)\in {\mathcal Y}$, {the transmission problem} $\eqref{Poisson-Brinkman-a-Besov-Dirichlet}$
has a unique solution $\left({\bf u}_{+},\pi _{+},{\bf u}_{-},\pi _{-}\right)\in {\mathcal X}$ and
there exists a linear continuous operator
\begin{align}
\label{T-1}
{\mathcal T}:{\mathcal Y}\to{\mathcal X}
\end{align}
delivering such a solution.
Hence there exists a constant $C\equiv C({\Omega }_{+},{\Omega }_{-},{\mathcal P},\alpha ,\mu)>0$ such that
\begin{align}
\label{estimate-Poisson-Besov0}
\|\left({\bf u}_{+},\pi _{+},{\bf u}_{-},\pi _{-}\right)\|_{{\mathcal X}}\leq
C\big\|(\tilde{\bf f}_{+},\tilde{\bf f}_{-},{\bf h}_0,{\bf g}_{0})\big\|_{\mathcal Y}\ .
\end{align}
Moreover, ${\bf u}_{-}$ vanishes at infinity in the sense of Leray, i.e.,
\begin{align}
\label{behavior-infinity-linear0}
\lim_{r\to \infty }\int _{S^2}|{\bf u}_{-}(r{\bf y})|d\sigma _{\bf y}=0.
\end{align}
\end{thm}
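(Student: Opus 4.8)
\emph{The plan} is to combine an integral layer potential representation with the uniqueness statement of Lemma~\ref{well-posedness-Poisson-Besov-Dirichlet-L}. First I would reduce \eqref{Poisson-Brinkman-a-Besov-Dirichlet} to a transmission problem with homogeneous PDE systems. Since $\tilde{\mathbf f}_{+}\in\widetilde H^{-1}(\Omega_{+})^3$ is supported in $\overline{\Omega}_{+}$ and $\tilde{\mathbf f}_{-}\in\widetilde{\mathcal H}^{-1}(\Omega_{-})^3$ in $\overline{\Omega}_{-}$, the Brinkman Newtonian potential $\big(\boldsymbol{\mathcal N}_{\alpha;\mathbb R^3}\tilde{\mathbf f}_{+},\mathcal Q_{\alpha;\mathbb R^3}\tilde{\mathbf f}_{+}\big)$ and the Stokes Newtonian potential $\big(\boldsymbol{\mathcal N}_{\mathbb R^3}\tilde{\mathbf f}_{-},\mathcal Q_{\mathbb R^3}\tilde{\mathbf f}_{-}\big)$ are, by \eqref{Newtonian-s1-sm}, particular solutions of the Brinkman system in $\Omega_{+}$ and the Stokes system in $\Omega_{-}$, respectively, and by the mapping properties in Lemmas~\ref{Newtonian-weight} and \ref{Newtonian-B} their restrictions lie in $H^{1}(\Omega_{+})^3\times L^2(\Omega_{+})$ and $\mathcal H^{1}(\Omega_{-})^3\times L^2(\Omega_{-})$. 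Subtracting them, using Remark~\ref{2.3} to split the conormal derivatives and Remark~\ref{Rm2.4} so that the remaining ones (with $\tilde{\mathbf f}=\mathbf 0$) are the canonical ones, the problem becomes that of finding $(\mathbf v_{+},q_{+},\mathbf v_{-},q_{-})\in\mathcal X$ with $\boldsymbol{\mathcal L}_{\alpha}(\mathbf v_{+},q_{+})=\mathbf 0$ in $\Omega_{+}$, $\boldsymbol{\mathcal L}_{0}(\mathbf v_{-},q_{-})=\mathbf 0$ in $\Omega_{-}$, $\mathrm{div}\,\mathbf v_{\pm}=0$, and with transmission data $(\mathbf h_{0}',\mathbf g_{0}')$ depending linearly and boundedly on $(\tilde{\mathbf f}_{+},\tilde{\mathbf f}_{-},\mathbf h_{0},\mathbf g_{0})$.

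Next I would use a layer potential ansatz: seek $\mathbf v_{+}=\mathbf V_{\alpha;\partial\Omega}\mathbf g_{+}$ with pressure $\mathcal Q^{s}_{\partial\Omega}\mathbf g_{+}$ in $\Omega_{+}$, and $\mathbf v_{-}=\mathbf V_{\partial\Omega}\mathbf g_{-}$ with pressure $\mathcal Q^{s}_{\partial\Omega}\mathbf g_{-}$ in $\Omega_{-}$, with unknown densities $\mathbf g_{\pm}\in H^{-\frac12}(\partial\Omega)^3$. By the properties of the Stokes and Brinkman potentials (Lemmas~\ref{layer-potential-properties-Stokes} and \ref{layer-potential-properties}) these pairs solve the corresponding homogeneous systems in $\mathbb R^3\setminus\partial\Omega$, lie in the required spaces (in particular the exterior Stokes single layer belongs to $\mathcal H^{1}(\Omega_{-})^3$ because $n=3$, and the Brinkman single layer decays exponentially), have continuous traces $\boldsymbol{\mathcal V}_{\alpha;\partial\Omega}\mathbf g_{+}$, $\boldsymbol{\mathcal V}_{\partial\Omega}\mathbf g_{-}$, and conormal derivatives of the form $(\pm\tfrac12\mathbb I+\mathbf K^{*}_{\alpha;\partial\Omega})\mathbf g_{+}$, $(\pm\tfrac12\mathbb I+\mathbf K^{*}_{\partial\Omega})\mathbf g_{-}$. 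Inserting the ansatz into the two transmission conditions of \eqref{Poisson-Brinkman-a-Besov-Dirichlet} converts the problem into a $2\times2$ boundary integral system $\mathcal M(\mathbf g_{+},\mathbf g_{-})=(\mathbf h_{0}',\mathbf g_{0}')$, where $\mathcal M\colon H^{-\frac12}(\partial\Omega)^3\times H^{-\frac12}(\partial\Omega)^3\to H^{\frac12}(\partial\Omega)^3\times H^{-\frac12}(\partial\Omega)^3$ has entries built from $\boldsymbol{\mathcal V}_{\alpha;\partial\Omega}$, $\boldsymbol{\mathcal V}_{\partial\Omega}$, $\pm\tfrac12\mathbb I+\mathbf K^{*}_{\alpha;\partial\Omega}$, $\mu(\pm\tfrac12\mathbb I+\mathbf K^{*}_{\partial\Omega})$ and $\tfrac12\mathcal P\,\boldsymbol{\mathcal V}_{(\cdot);\partial\Omega}$.

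The core of the proof is the invertibility of $\mathcal M$, which I would establish by showing that $\mathcal M$ is Fredholm of index zero and injective. For Fredholmness, write $\mathcal M=\mathcal M_{0}+\mathcal C$, where $\mathcal M_{0}$ is obtained by setting $\alpha=0$ and deleting the $\mathcal P$-term; $\mathcal M_{0}$ is a Stokes--Stokes transmission operator assembled from the Stokes single-layer operator $\boldsymbol{\mathcal V}_{\partial\Omega}$ and from $\pm\tfrac12\mathbb I+\mathbf K^{*}_{\partial\Omega}$, whose invertibility on three-dimensional Lipschitz domains follows from \cite{M-W} (e.g.\ after a Schur-complement reduction based on the invertibility of $\boldsymbol{\mathcal V}_{\partial\Omega}$), while $\mathcal C$ collects the differences of the Brinkman and Stokes layer potentials --- integral operators with smoothing kernels, hence compact --- together with the $\mathcal P$-contribution, which factors through the compact embedding $H^{\frac12}(\partial\Omega)\hookrightarrow L^2(\partial\Omega)$ (composed with the bounded single layer, multiplication by $\mathcal P\in L^{\infty}$, and the continuous embedding $L^2(\partial\Omega)\hookrightarrow H^{-\frac12}(\partial\Omega)$), hence is compact. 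For injectivity, if $(\mathbf g_{+},\mathbf g_{-})\in\ker\mathcal M$ then the associated single-layer potentials furnish a solution of the homogeneous version of \eqref{Poisson-Brinkman-a-Besov-Dirichlet} in the class \eqref{udiff} with $\mathbf u_{\infty}=\mathbf 0$, so Lemma~\ref{well-posedness-Poisson-Besov-Dirichlet-L} forces $\mathbf v_{+}\equiv\mathbf 0$ in $\Omega_{+}$ and $\mathbf v_{-}\equiv\mathbf 0$ in $\Omega_{-}$; then $\mathbf V_{\alpha;\partial\Omega}\mathbf g_{+}$ has zero trace from $\Omega_{+}$, hence zero trace from $\Omega_{-}$ by continuity of the single-layer trace, hence solves the exterior Brinkman Dirichlet problem with null data and therefore vanishes in $\Omega_{-}$ as well, and the jump relation for the conormal derivative of the single layer yields $\mathbf g_{+}=\mathbf 0$; symmetrically, $\mathbf V_{\partial\Omega}\mathbf g_{-}$ vanishes in all of $\mathbb R^3$ (using uniqueness up to the pressure constant for the interior Stokes Dirichlet problem) and the analogous jump relation gives $\mathbf g_{-}=\mathbf 0$. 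Hence $\mathcal M$ is invertible with bounded inverse, and the solution operator $\mathcal T\colon\mathcal Y\to\mathcal X$ is the composition of the bounded Newtonian-potential data-modification map, the bounded operator $\mathcal M^{-1}$, and the bounded layer- and Newtonian-potential reconstruction operators (all mapping properties being those of Lemmas~\ref{Newtonian-weight}, \ref{Newtonian-B}, \ref{layer-potential-properties-Stokes}, \ref{layer-potential-properties}); this yields linearity, continuity and \eqref{estimate-Poisson-Besov0}, uniqueness in $\mathcal X$ is exactly Lemma~\ref{well-posedness-Poisson-Besov-Dirichlet-L} with $\mathbf u_{\infty}=\mathbf 0$, and \eqref{behavior-infinity-linear0} is immediate from $\mathbf u_{-}\in\mathcal H^{1}(\Omega_{-})^3$ and the corollary stated after Definition~\ref{behavior-infinity-sm} (cf.\ \eqref{behavior-infty-sm-ms}).

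I expect the decomposition $\mathcal M=\mathcal M_{0}+\mathcal C$ with $\mathcal M_{0}$ invertible to be the delicate point. On Lipschitz (as opposed to $C^1$) domains the double-layer operators $\mathbf K^{*}_{\partial\Omega}$ are \emph{not} compact, so $\pm\tfrac12\mathbb I+\mathbf K^{*}_{\partial\Omega}$ cannot be treated as a compact perturbation of $\pm\tfrac12\mathbb I$; one must instead import the deep invertibility results for the Stokes boundary integral operators on three-dimensional Lipschitz domains from \cite{M-W} and arrange the transmission system so that precisely those operators constitute $\mathcal M_{0}$, relegating everything depending on $\alpha$ and on $\mathcal P$ to the genuinely compact remainder $\mathcal C$ --- and then checking carefully that this remainder really is compact in the boundary Sobolev setting (in particular that the Brinkman-minus-Stokes kernels are smoothing uniformly up to $\partial\Omega$).
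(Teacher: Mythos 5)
Your overall strategy --- subtract Newtonian potentials, convert to a boundary integral system, prove Fredholmness of index zero plus injectivity via Lemma~\ref{well-posedness-Poisson-Besov-Dirichlet-L}, and read off \eqref{behavior-infinity-linear0} from ${\bf u}_-\in\mathcal H^1(\Omega_-)^3$ --- is exactly the paper's, but your layer-potential ansatz does not work, and the failure is not repairable by a more careful compactness argument. You represent the solution by two \emph{independent single-layer densities}, ${\bf v}_\pm={\bf V}_{(\alpha);\partial\Omega}{\bf g}_\pm$. For the three-dimensional Stokes system on a connected Lipschitz boundary the single-layer velocity potential of the unit normal vanishes identically, $\boldsymbol{\mathcal V}_{\partial\Omega}\nu={\bf 0}$, while its pressure is a nonzero constant in $\Omega_+$ and zero in $\Omega_-$; consequently $({\bf g}_+,{\bf g}_-)=({\bf 0},\nu)$ produces zero fields in $\Omega_-$ and zero transmission data, so your operator $\mathcal M$ has a nontrivial kernel. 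Your injectivity argument breaks precisely where you wave at it: after the interior Stokes uniqueness you only get that the interior pressure is a constant $c$, and the traction jump relation then yields ${\bf g}_-=c\nu$, not ${\bf g}_-={\bf 0}$. The cokernel is nontrivial for the matching reason: $\langle\boldsymbol{\mathcal V}_{\alpha;\partial\Omega}{\bf g}_+-\boldsymbol{\mathcal V}_{\partial\Omega}{\bf g}_-,\nu\rangle_{\partial\Omega}=0$ for all densities (both potentials are divergence-free, and a single-layer exterior field has zero flux through $\partial\Omega$), whereas the datum ${\bf h}_0\in H^{\frac12}(\partial\Omega)^3$ need not satisfy $\langle{\bf h}_0,\nu\rangle_{\partial\Omega}=0$. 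So the boundary system is not solvable for general data, and existence fails. Relatedly, the ``Schur-complement reduction based on the invertibility of $\boldsymbol{\mathcal V}_{\partial\Omega}$'' is built on a false premise: $\boldsymbol{\mathcal V}_{\partial\Omega}$ is only Fredholm of index zero with kernel $\mathbb R\nu$.

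The paper's proof sidesteps all of this by using a \emph{combined} ansatz with a single pair of densities $(\boldsymbol\Phi,\boldsymbol\varphi)\in H^{\frac12}(\partial\Omega)^3\times H^{-\frac12}(\partial\Omega)^3$ shared by both domains, ${\bf w}_+={\bf W}_{\alpha;\partial\Omega}\boldsymbol\Phi+{\bf V}_{\alpha;\partial\Omega}\boldsymbol\varphi$ in $\Omega_+$ and ${\bf w}_-={\bf W}_{\partial\Omega}\boldsymbol\Phi+{\bf V}_{\partial\Omega}\boldsymbol\varphi$ in $\Omega_-$. The double layer carries the trace jump (its first-row principal part is $-\mathbb I$) and, unlike the single layer, can produce an exterior field with nonzero flux through $\partial\Omega$, which removes the compatibility obstruction; the principal part ${\mathcal T}_\mu$ is triangular with diagonal entries $-\mathbb I$ and $\tfrac12(1+\mu)\mathbb I+(1-\mu){\bf K}^*_{\partial\Omega}$, whose invertibility for all $\mu>0$ is the genuinely deep input from \cite{M-W} (your instinct that $\pm\tfrac12\mathbb I+{\bf K}^*_{\partial\Omega}$ cannot be handled by compactness on Lipschitz domains is correct); and injectivity is closed not by solving auxiliary Dirichlet problems for each density separately but by reading $\boldsymbol\Phi^0$ and $\boldsymbol\varphi^0$ off the jump relations and summing two Green identities in the \emph{complementary} domains, which produces a coercive quantity forcing both densities to vanish. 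If you want to keep a pure single-layer representation you would have to work modulo $\mathbb R\nu$ and impose/verify the flux compatibility condition on the data, which does not hold here for arbitrary ${\bf h}_0$.
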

\begin{proof}
We construct a solution $\left({\bf u}_{+},\pi _{+},{\bf u}_{-},\pi _{-}\right)\in {\mathcal X}$ of problem \eqref{Poisson-Brinkman-a-Besov-Dirichlet} in the form
\begin{align}
\label{3.3-new-s1}
&{\bf u}_{+}={\boldsymbol{\mathcal N}}_{\alpha ;{\Omega }_{+}}{\tilde{\bf f}}_{+}+{\bf w}_{+},\ \ \pi _{+}={\mathcal Q}_{\alpha ;{\Omega }_{+}}{\tilde{\bf f}}_{+}+p_{+} \mbox{ in } {\Omega }_{+},
\\
\label{3.3-new-s-m}
&{\bf u}_{-}={\boldsymbol{\mathcal N}}_{{\Omega }_{-}}{\tilde{\bf f}}_{-}+{\bf w}_{-},\ \ \pi _{-}={\mathcal Q}_{{\Omega }_{-}}{\tilde{\bf f}}_{-}+p_{-} \mbox{ in } {\Omega }_{-}.
\end{align}
{Here} ${\boldsymbol{\mathcal N}}_{\alpha ;{\Omega }_{+}}{\tilde{\bf f}}_{+}$ and ${\mathcal Q}_{\alpha ;{\Omega }_{+}}{\tilde{\bf f}}_{+}$ are the Newtonian velocity and pressure potentials with the density ${\tilde{\bf f}}_{+}\in \widetilde{H}^{-1}({\Omega }_{+})^3$, which correspond to the Brinkman system in ${\Omega }_{+}$, see (\ref{Newtonian-D-B-F-S-M}). Therefore,
\begin{align}
\label{Newtonian-s1}
\triangle {\boldsymbol{\mathcal N}}_{\alpha ;{\Omega }_{+}}{\tilde{\bf f}}_{+}-\alpha {\boldsymbol{\mathcal N}}_{\alpha ;{\Omega }_{+ }}{\tilde{\bf f}}_{+}-\nabla {\mathcal Q}_{\alpha ;{\Omega }_{+}}{\tilde{\bf f}}_{+}={\tilde{\bf f}}_{+},\
{\rm{div}}\left({\boldsymbol{\mathcal N}}_{\alpha ;{\Omega }_{+}}{\tilde{\bf f}}_{+}\right)=0 \mbox{ in } {\Omega }_{+},
\end{align}
% and the operators
%\begin{equation}
%\label{Newtonian-Brinkman-S}
%{\boldsymbol{\mg\mathcal N}}_{\alpha ;{\Omega }_{+}}:\widetilde{H}^{-1}({\Omega }_{+})^3\to H^{1}({\Omega }_{+})^3,\ {\mathcal Q}_{\alpha ;{\Omega }_{+}}:\widetilde{H}^{-1}({\Omega }_{+})^3\to L^{2}({\Omega }_{+})
%\end{equation}
%are linear and continuous
{and by (\ref{Newtonian-Brinkman-SM}) and (\ref{Newtonian-Brinkman+}),
\begin{align}\label{4.21}
{\boldsymbol{\mathcal N}}_{\alpha ;{\Omega }_{+}}{\tilde{\bf f}}_{+}\in H^{1}_{\rm div}({\Omega }_{+})^3,\quad {\mathcal Q}_{\alpha ;{\Omega }_{+}}{\tilde{\bf f}}_{+}\in L^2({\Omega }_{+}).
\end{align}
}
Similarly, the Newtonian {velocity} potential ${\boldsymbol{\mathcal N}}_{{\Omega }_{-}}{\tilde{\bf f}}_{-}$ for the Stokes system in $\Omega _{-}$ and its corresponding pressure potential ${\mathcal Q}_{{\Omega }_{-}}{\tilde{\bf f}}_{-}$ satisfy the equations
\begin{equation}
\label{Newtonian-Stokes-s1}
\triangle {\boldsymbol{\mathcal N}}_{{\Omega }_{-}}{\tilde{\bf f}}_{-}-\nabla {\mathcal Q}_{{\Omega }_{-}}{\tilde{\bf f}}_{-}={\tilde{\bf f}}_{-},\
{\rm{div}}\left({\boldsymbol{\mathcal N}}_{{\Omega }_{-}}{\tilde{\bf f}}_{-}\right)=0\ \mbox{ in }\ {\Omega }_{-},
\end{equation}
and {by (\ref{Newtonian-Stokes-S}),
\begin{align}\label{4.23}
{\boldsymbol{\mathcal N}}_{{\Omega }_{-}}{\tilde{\bf f}}_{-}\in {\mathcal H}^{1}_{\rm div}({\Omega }_{-})^3,\quad {\mathcal Q}_{{\Omega }_{-}}{\tilde{\bf f}}_{-}\in L^2({\Omega }_{-})
\end{align}}

Thus, $\left({\bf v}_{+},\pi _{+},{\bf v}_{-},\pi _{-}\right)$ given by (\ref{3.3-new-s1}) and (\ref{3.3-new-s-m}) is a solution of the Poisson problem of transmission type (\ref{Poisson-Brinkman-a-Besov-Dirichlet}) in the space ${\mathcal X}$ if and only if
$\left({\bf w}_{+},p_{+},{\bf w}_{-},p_{-}\right)\in {\mathcal X}$
satisfies the following transmission problem for the Stokes and Brinkman systems
\begin{equation}
\label{Dirichlet-transmission-ms}
\left\{\begin{array}{lll} \triangle
{\bf w}_{+}-\alpha {\bf w}_{+}-\nabla p_{+}={\bf 0}\
%{\rm{div}}\ {\bf w}_{+}=0\
\mbox{ in }\ {\Omega }_{+},\\
\triangle {\bf w}_{-}-\nabla p_{-}={\bf 0}\
%{\rm{div}}\ {\bf w}_{-}=0\
\mbox{ in }\ {\Omega }_{-},\\
\gamma_{+}{\bf w}_{+}-\gamma_{-}{\bf w}_{-}
={{\bf h}_{00}} \mbox{ on } \partial\Omega ,\\
{\bf t}_{\alpha }^{+}({\bf w}_{+},p_{+})-\mu {\bf t}_{0}^{-}({\bf w}_{-},p_{-})
+\frac{1}{2}{\mathcal P}\, \left(\gamma_{-}{\bf w}_{-}+\gamma_{+}{\bf w}_{+}\right)
={{\bf g}_{00}} \mbox{ on } { \partial\Omega },
%\\
%{\bf v}_{-}({\bf x})\to 0,\ p_{-}({\bf x})\to \mbox{ as } |{\bf x}|\to \infty ,
\end{array}\right.
\end{equation}
where
\begin{align}
\label{H0}
&{{\bf h}_{00}}:={\bf h}_0-\gamma_{+}\left({\boldsymbol{\mathcal N}}_{\alpha ;{\Omega }_{+}}{\tilde{\bf f}}_{+}\right)+\gamma_{-}\left({\boldsymbol{\mathcal N}}_{{\Omega }_{-}}{\tilde{\bf f}}_{-}\right),\\
\label{G0}
&{{\bf g}_{00}}:={\bf g}_0-{\bf t}_{\alpha }^{+}\left({\boldsymbol{\mathcal N}}_{\alpha ;{\Omega }_{+}}{\tilde{\bf f}}_{+},
{\mathcal Q}_{\alpha ;{\Omega }_{+}}{\tilde{\bf f}}_{+};{\tilde{\bf f}}_{+}\right)+\mu {\bf t}_{0}^{-}\left({\boldsymbol{\mathcal N}}_{{\Omega }_{-}}{\tilde{\bf f}}_{-},{\mathcal Q}_{{\Omega }_{-}}{\tilde{\bf f}}_{-};{\tilde{\bf f}}_{-}\right)\nonumber
\\
&\quad \quad \quad \quad \quad \quad \quad \ \ -\frac{1}{2}{\mathcal P}\, \left(\gamma_{+}\left({\boldsymbol{\mathcal N}}_{\alpha ;{\Omega }_{+}}{\tilde{\bf f}}_{+}\right)+\gamma_{-}\left({\boldsymbol{\mathcal N}}_{{\Omega }_{-}}{\tilde{\bf f}}_{-}\right)\right).
\end{align}

{By} Lemma \ref{trace-operator1} and {(\ref{4.21})}, $\gamma_{+}\left({\boldsymbol{\mathcal N}}_{\alpha ;{\Omega }_{+}}{\tilde{\bf f}}_{+}\right)\in H^{\frac{1}{2}}(\partial \Omega )^3$, {while by \eqref{4.23} and continuity} of the exterior trace operator $\gamma_{-}:{\mathcal H} ^{1}({\Omega }_{-})^3\to H^{\frac{1}{2}}({\partial\Omega  })^3$ we obtain that $\gamma_{-}\left({\boldsymbol{\mathcal N}}_{{\Omega }_{-}}{\tilde{\bf f}}_{-}\right)\in H^{\frac{1}{2}}(\partial \Omega )^3$. Then the assumption ${\bf h}_{0}\in H^{\frac{1}{2}}(\partial \Omega )^3$ {implies}
%\begin{equation}
%\label{ortogonality-nu-s}
{${\bf h}_{00}\in H^{\frac{1}{2}}(\partial \Omega )^3.$}
%\end{equation}
Since ${\bf g}_{0}\in H^{-\frac{1}{2}}(\partial \Omega )^3$ and ${\mathcal P}\in L^{\infty }({\partial\Omega })^{3\times 3}$, Lemmas \ref{conormal-derivative-generalized-Robin} and \ref{conormal-derivative-generalized-Robin-S} imply
%\begin{equation}
%\label{ortogonality-nu-s-4}
${\bf g}_{00}\in H^{-\frac{1}{2}}(\partial \Omega )^3$.
%\end{equation}

Next, we look for unknown fields ${\bf w}_{\pm }$ and $p_{\pm }$ in terms
of the following layer potentials
\begin{align}
\label{3.3-new}
&{\bf w}_{+}={\bf W}_{_{\alpha ;\partial \Omega }}\boldsymbol{\Phi} + {\bf V}_{_{\alpha ;\partial \Omega }}\boldsymbol{\varphi} ,\ \ p_{+}={\mathcal Q}_{_{\alpha ;\partial \Omega }}^d\boldsymbol{\Phi} + {\mathcal Q}_{_{\alpha ;\partial \Omega }}^s\boldsymbol{\varphi} \ \mbox{ in }\ {\Omega }_{+},
\\
\label{3.1-new}
&{\bf w}_{-}={\bf W}_{_{\partial \Omega }}\boldsymbol{\Phi} + {\bf V}_{_{\partial \Omega }}\boldsymbol{\varphi} ,\ \ p_{-}={\mathcal Q}_{_{\partial \Omega }}^d\boldsymbol{\Phi} +{\mathcal Q}_{_{\partial \Omega }}^s\boldsymbol{\varphi} \ \mbox{ in }\ {\Omega }_{-},
\end{align}
with unknown densities $\left(\boldsymbol{\Phi} ,\boldsymbol{\varphi} \right)^{\top }\!\!\in \!H^{\frac{1}{2}}(\partial \Omega )^3\!\times H^{-\frac{1}{2}}(\partial \Omega )^3$. Note that the function set
$\left({\bf w}_{+},p_{+},{\bf w}_{-},p_{-}\right)$ satisfies the domain equations in the first two lines of {transmission problem} \eqref{Dirichlet-transmission-ms} and belongs to ${\mathcal X}$
%\end{equation}
due to {the mapping properties} (\ref{ss-s1-ms}), (\ref{ds-s1-ms}), (\ref{ss-s1}) and (\ref{ds-s1}).

By (\ref{68-Stokes}), (\ref{68-s1-Stokes}), (\ref{68}) and (\ref{68-s1}), and the first transmission
condition in (\ref{Dirichlet-transmission-ms}) we obtain % the equation
\begin{align}
\label{3.7a}
\left(-{\mathbb I}+{\bf K}_{\alpha ,0;\partial\Omega }\right)\boldsymbol{\Phi} +
{\boldsymbol{\mathcal V}}_{\alpha ,0;\partial\Omega }\boldsymbol{\varphi}
={{\bf h}_{00}} \mbox{ on } \partial\Omega ,
\end{align}
where the complementary single- and double-layer potential operators
\begin{align}
\label{2.2tporousmedia-new}
{\boldsymbol{\mathcal V}}_{\alpha ,0;\partial\Omega }&:= {\boldsymbol{\mathcal V}}_{\alpha ;\partial\Omega }- {\boldsymbol{\mathcal V}}_{\partial\Omega }:H^{-\frac{1}{2}}(\partial \Omega )^3\to H^{\frac{1}{2}}(\partial \Omega )^3
\\
\label{2.2tporousmedia-new1}
{\bf K}_{\alpha ,0;\partial\Omega }&:={\bf K}_{\alpha ;\partial\Omega }-{\bf K}_{\partial\Omega }:
H^{\frac{1}{2}}(\partial \Omega )^3\to H^{\frac{1}{2}}(\partial \Omega )^3
\end{align}
are compact (see \cite[Theorem 3.1]{K-L-W}).

Now {the jump relations} (\ref{70aaa-Stokes}), (\ref{70aaaa-Stokes}), (\ref{70aaa}) and (\ref{70aaaa}), and the second
transmission condition in (\ref{Dirichlet-transmission-ms}) lead to the equation
\begin{align}
\label{3.10}
\Big(\frac{1}{2}(1+\mu ){\mathbb I}&+(1-\mu ){\bf K}_{\partial\Omega }^*+{\bf K}_{\alpha ,0;{ \partial\Omega } }^*\Big)\boldsymbol{\varphi} +\Big((1-\mu ){\bf D}_{\partial\Omega }+{\bf D}_{\alpha ,0;\partial\Omega }\Big)\boldsymbol{\Phi} \nonumber\\
&+\frac{1}{2}{\mathcal P}\Big(\left({\boldsymbol{\mathcal V}}_{{\alpha ;\partial\Omega }}+{\boldsymbol{\mathcal V}}_{{\partial\Omega }}\right)\boldsymbol{\varphi} +\left({\bf K}_{\alpha ;\partial\Omega }+{\bf K}_{\partial\Omega }\right)\boldsymbol{\Phi} \Big)={{\bf g}_{00}} \mbox{ on } \partial\Omega ,
\end{align}
where the complementary layer potential operators
\begin{align}
\label{2.2tporousmedia-new-s}
&{\bf D}_{\alpha ,0;\partial \Omega }:={\bf D}_{\alpha ;\partial \Omega }-{\bf D}_{\partial \Omega }:H^{\frac{1}{2}}(\partial \Omega )^3\to H^{-\frac{1}{2}}(\partial \Omega )^3,\\
\label{2.2tporousmedia-new-s1}
&{\bf K}_{\alpha ,0;\partial\Omega }^*:={\bf K}_{\alpha ;\partial\Omega }^*-{\bf K}_{\partial\Omega }^*:H^{-\frac{1}{2}}(\partial \Omega )^3\to H^{-\frac{1}{2}}(\partial \Omega )^3,
\end{align}
are linear and compact, {and \eqref{2.2tporousmedia-new-s1} is} the adjoint of the complementary double-layer potential operator ${\bf K}_{\alpha ,0;\partial\Omega }:H^{\frac{1}{2}}(\partial \Omega )^3
\to H^{\frac{1}{2}}(\partial \Omega )^3$ (see again \cite[Theorem 3.1]{K-L-W}).

%{ SM: To avoid confusion with the right inverse to the trace operator, I replaced below the notation %${\mathcal Z}$ by $\mathbb X$.}
Now we set
$
{\mathbb X}:=H^{\frac{1}{2}}(\partial \Omega )^3\times H^{-\frac{1}{2}}(\partial \Omega )^3.
$

Then the {transmission problem}
(\ref{Dirichlet-transmission-ms}) reduces to {the system of equations}
(\ref{3.7a}) and (\ref{3.10}), which can be written in the following matrix form
\begin{equation}
\label{3.7operator}
{\mathcal T}_{\alpha; \mu }(\boldsymbol{\Phi} ,\boldsymbol{\varphi} )^{\top }={\mathfrak D}\ \mbox{ in }\ {\mathbb X},
\end{equation}
with unknown $(\boldsymbol{\Phi} ,\boldsymbol{\varphi} )^{\top }\in {\mathbb X}$, where
${{\mathcal T}_{\alpha;\mu}}:{\mathbb X}\to {\mathbb X}$ is the {matrix} operator
\begin{equation}
\label{3.7operator-new1}
{\mathcal T}_{\alpha; \mu }:=\left(\!
\begin{array}{cc}
-{\mathbb I}+{\bf K}_{{\alpha ,0;{ \partial\Omega } }} \ & \ {\boldsymbol{\mathcal V}}_{{\alpha ,0;\partial\Omega }} \\
(1-\mu ){\bf D}_{\partial\Omega }+{\bf D}_{\alpha ,0;\partial\Omega }+\frac{1}{2}{\mathcal P}\!\left({\bf K}_{\alpha ;\partial\Omega }\!+\!{\bf K}_{\partial\Omega }\right) \ & \ {{\mathcal K}_{\mu ; \partial \Omega }} \!+\!{\bf K}_{\alpha ,0;{ \partial\Omega } }^*\!+\!\frac{1}{2}{\mathcal P}\left({\boldsymbol{\mathcal V}}_{{\alpha ;\partial\Omega }}\!+\!{\boldsymbol{\mathcal V}}_{{\partial\Omega }}\right)
\end{array}
\!\!\right)
\end{equation}
and ${\mathcal K}_{\mu ; \partial \Omega }:H^{-\frac{1}{2}}(\partial \Omega )^3\to H^{-\frac{1}{2}}(\partial \Omega )^3$ is the operator given by
\begin{align}
\label{new-oper}
{\mathcal K}_{\mu ; \partial \Omega }:=\frac{1}{2}(1+\mu ){\mathbb I}+(1-\mu ){\bf K}_{\partial\Omega }^*.
\end{align}
In addition,
%\begin{align*}
%\label{space-boundary-new}
{
$
{\mathfrak D}:=\left({{\bf h}_{00}},{{\bf g}_{00}}\right)^{\top }\in {\mathbb X}.
$
}
%\end{align*}
The operator
${\mathcal T}_{\alpha ;\mu }:{\mathbb X}\to {\mathbb X}$ can be written as
%\begin{equation}
%\label{Fredholm-compact1}
$
{\mathcal T}_{\alpha ;\mu }={\mathcal T}_{\mu }+{\mathcal C}_{\alpha ;0},
$
%\end{equation}
where ${\mathcal T}_{\mu }:{\mathbb X}\to {\mathbb X}$ and ${\mathcal C}_{\alpha ;0}:{\mathbb X}\to {\mathbb X}$ are the operators defined by
\begin{equation}
\label{Fredholm-compact2} {\mathcal T}_{\mu }:=\left(
\begin{array}{ccc}
-{\mathbb I}\ & \ {\bf 0} \\
(1-\mu ){\bf D}_{\partial\Omega }\ & \ {\mathcal K}_{\mu ; \partial \Omega } \\
\end{array}
\right),
\end{equation}
\begin{equation}
\label{Fredholm-compact2-T}
\!\!\!\!{\mathcal C}_{\alpha ;0}:= \left(
\begin{array}{ccc}
{\bf K}_{{\alpha ,0;{ \partial\Omega } }} \ & \ {\boldsymbol{\mathcal V}}_{{\alpha ,0;{ \partial\Omega } }} \\
{\bf D}_{\alpha ,0;\partial\Omega } +\frac{1}{2}{\mathcal P}\!\left({\bf K}_{\alpha ;\partial\Omega }\!+\!{\bf K}_{\partial\Omega }\right) \ & \
{\bf K}_{\alpha ,0; \partial\Omega }^* +\frac{1}{2}{\mathcal P}\left({\boldsymbol{\mathcal V}}_{{\alpha ;\partial\Omega }}+{\boldsymbol{\mathcal V}}_{{\partial\Omega }}\right) \\
\end{array}
\right).
\end{equation}
We now show that the operator ${\mathcal T}_{\mu }:{\mathbb X}\to {\mathbb X}$ is an isomorphism for any $\mu >0$.

%\begin{itemize}
%\item[(i)]
(i) If $\mu =1$ then ${\mathcal T}_{\mu }$ reduces to the isomorphism
%\begin{equation}
%\label{Fredholm-compact2-new}
$
\left(
\begin{array}{ccc}
-{\mathbb I}\ & \ {\bf 0} \\
{\bf 0}\ & \ {\mathbb I} \\
\end{array}
\right).
$
%\end{equation}

%\item[(ii)]
(ii)
If $\mu \in (0,+\infty )\setminus \{1\}$, then
%the operator ${\mathcal K}_{\mu ; \partial \Omega }:H^{-\frac{1}{2}}(\partial \Omega )^3\to H^{-\frac{1}{2}}(\partial \Omega )^3$ given by (\ref{new-oper}) can be written as
%\begin{align}
%\label{new-oper-1}
$
{\mathcal K}_{\mu ; \partial \Omega }=(1-\mu )\left(\dfrac{1}{2}\dfrac{1+\mu }{1-\mu }{\mathbb I}+{\bf K}_{\partial\Omega }^*\right)
$
%\end{align}
and this operator is an isomorphism whenever $\mu \in (0,1)$ (cf., e.g., \cite[Theorem 5.3.6, Lemma 11.9.21]{M-W}, \cite[Proposition 10.6]{Agr-1}, and an interpolation argument, as in the proof of \cite[Theorem 10.5.3]{M-W}). If $\mu \in (1,+\infty )$, such a property is still valid. Indeed, we have $\mu ^{-1}\in (0,1)$, and
%{operator} (\ref{new-oper-1}) can be written in the equivalent form
%\begin{align}
%\label{new-oper-2}
$
{\mathcal K}_{\mu ; \partial \Omega }=(1-\mu )\left(-\dfrac{1}{2}\dfrac{1+\mu ^{-1}}{1-\mu ^{-1}}{\mathbb I}+{\bf K}_{\partial\Omega }^*\right),
$
%\end{align}
and again \cite[Theorem {5.3.6}]{M-W} implies that ${\mathcal K}_{\mu ; \partial \Omega }$ is an isomorphism.
%\end{itemize}

Consequently, for any $\mu \in (0,+\infty )$ the operator ${\mathcal K}_{\mu ; \partial \Omega }:H^{-\frac{1}{2}}(\partial \Omega )^3\to H^{-\frac{1}{2}}(\partial \Omega )^3$ given by (\ref{new-oper}) is an isomorphism, and then the operator ${\mathcal T}_{\mu }:{\mathbb X}\to {\mathbb X}$ given by (\ref{Fredholm-compact2}) is an isomorphism as well. The operator ${\mathcal C}_{\alpha ;0}:{\mathbb X}\to {\mathbb X}$ is linear and compact {due to} the compactness of {operators} (\ref{2.2tporousmedia-new-s}) and (\ref{2.2tporousmedia-new-s1}) and the compactness of the embedding $L^2(\partial \Omega )^3\hookrightarrow H^{-\frac{1}{2}}(\partial \Omega )^3$. Hence the operator ${\mathcal T}_{\alpha ;\mu }:{\mathbb X}\to {\mathbb X}$ given by (\ref{3.7operator-new1}) is a Fredholm operator with index zero. We now show that ${\mathcal T}_{\alpha ;\mu } $ is also one-to-one, i.e.,
\begin{equation*}
%\label{kernel-Fredholm1-new}
{\rm{Ker}}\left\{{\mathcal T}_{\alpha ;\mu }:{\mathbb X}\to
{\mathbb X}\right\}=\{0\}.
\end{equation*}
Let $\left(\boldsymbol{\Phi} ^0,\boldsymbol{\varphi} ^0\right)^{\top }\in {\rm{Ker}}\left\{{\mathcal T}_{\alpha ;\mu }:{\mathbb X}\to {\mathbb X}\right\}$, and consider the layer potentials
\begin{align}
\label{3.3-new-aaa}
{\bf u}^{0}={\bf W}_{\alpha ;\partial\Omega }\boldsymbol{\Phi} ^0+{\bf V}_{\alpha ;\partial\Omega }\boldsymbol{\varphi} ^0,\quad
&\pi^{0}={\mathcal Q}_{\alpha ;\partial\Omega }^d\boldsymbol{\Phi} ^0+{\mathcal Q}_{\alpha ;\partial\Omega }^s\boldsymbol{\varphi} ^0
\mbox{ in } {\mathbb R}^3\setminus \partial\Omega ,
\\
\label{3.1-new-aaa}
{\bf v}^{0}={\bf W}_{\partial\Omega }\boldsymbol{\Phi} ^0+{\bf V}_{\partial\Omega }\boldsymbol{\varphi} ^0,\quad
&p^{0}={\mathcal Q}_{\partial\Omega }^d\boldsymbol{\Phi} ^0+{\mathcal Q}_{\partial\Omega }^s\boldsymbol{\varphi} ^0
\mbox{ in } {\mathbb R}^3\setminus \partial\Omega .
\end{align}

By Lemma \ref{layer-potential-properties-Stokes}, Lemma \ref{layer-potential-properties} and the embedding $L^2(\Omega _{-})\subset \mathfrak M(\Omega _{-})$, we have the following inclusions for the restrictions to $\Omega_\pm$,
\begin{align}
\label{4-43}
&({\bf u}^{0},\pi^{0})\in H_{{\rm{div}}}^{1}({\Omega }_{+})^3\times L^2({\Omega }_{+}),\quad
({\bf u}^{0},\pi^{0})\in H_{{\rm{div}}}^{1}({\Omega }_{-})^3\times \mathfrak M({\Omega }_{-}),\\
\label{4-44}
&({\bf v}^{0},p^{0})\in H_{{\rm{div}}}^{1}({\Omega }_{+})^3\times L^2({\Omega }_{+}),\quad
({\bf v}^{0},p^{0})\in {\mathcal H}_{{\rm{div}}}^{1}(\Omega _{-})^3\times L^2({\Omega }_{-}).
\end{align}
In addition, the functions $\left({\bf u}^{0},\pi^{0},{\bf v}^{0},p^{0}\right)$ determine a solution of the homogeneous transmission problem associated to (\ref{Poisson-Brinkman-a-Besov-Dirichlet}) in the space ${\mathcal X}$. Then by Lemma \ref{well-posedness-Poisson-Besov-Dirichlet-L} we obtain that
\begin{equation}
\label{3.3-new1g} {\bf u}^{0}={\bf 0},\ \pi^{0}=0\ \mbox{ in }\ {\Omega }_{+},\ \ {\bf v}^{0}={\bf 0},\ p^{0}=0\ \mbox{ in }\ {\Omega }_{-}.
\end{equation}

Now {formulas} (\ref{68-Stokes}), (\ref{70aaa-Stokes}), (\ref{68}) and (\ref{70aaa}) applied to the layer potentials (\ref{3.3-new-aaa}) and (\ref{3.1-new-aaa}), together with (\ref{3.3-new1g}) yield that
\begin{align}
\label{3.3-new1f}
&\gamma_{-}{\bf u}^{0}=\boldsymbol{\Phi} ^0,\ \gamma_{+}{\bf v}^{0}=-\boldsymbol{\Phi} ^0\ \mbox{ on }\ \partial\Omega ,
\\
\label{3.3-new2a}
&{\bf t}_{\alpha }^{-}({\bf u}^{0},\pi^{0})=-\boldsymbol{\varphi} ^0,\
{\bf t}_{0}^{+}({\bf v}^{0},p_{0})=\boldsymbol{\varphi} ^0\ \mbox{ on }\ \partial\Omega .
\end{align}

{Due to the second membership in \eqref{4-43} and formula \eqref{conormal-generalized-2c} we can apply the  Green identity (\ref{conormal-generalized-2}) with ${\tilde{\bf f}}_-=\mathbf 0$ for the exterior domain ${\Omega }_{-}$ to obtain by the first relations in (\ref{3.3-new1f})
and (\ref{3.3-new2a}),}
\begin{align}
\label{3.3-new2b}
2\langle {\mathbb E}({\bf u}^{0}),{\mathbb E}({\bf u}^{0})\rangle _{{\Omega }_{-}}
+\alpha \langle {\bf u}^{0},{\bf u}^{0}\rangle _{{\Omega }_{-}}=
-\langle {\bf t}_{\alpha }^{-}({\bf u}^{0},\pi^{0}), \gamma_{-}{\bf u}^{0}\rangle
_{_{{\partial\Omega }}}=\langle \boldsymbol{\varphi} ^0,\boldsymbol{\Phi} ^0\rangle _{_{{ \partial\Omega } }}.
\end{align}
In addition, {Green's formula} (\ref{conormal-generalized-2}) for $\alpha =0$ and the second relations in (\ref{3.3-new1f}) and (\ref{3.3-new2a}), imply that
\begin{align}
\label{3.3-new2c}
2\langle {\mathbb E}({\bf v}^{0}),{\mathbb E}({\bf v}^{0})\rangle _{{\Omega }_{+}}&=\langle {\bf
t}_{0}^{+}({\bf v}^{0},p^{0}),
\gamma_{+}{\bf v}^{0}\rangle _{_{\partial\Omega }}
=-\langle \boldsymbol{\varphi} ^0,\boldsymbol{\Phi} ^0\rangle _{_{{ \partial\Omega } }}.
\end{align}
Adding {formulas} (\ref{3.3-new2b}) and (\ref{3.3-new2c}) we deduce that
${\bf u}^{0}={\bf 0},\ \pi^{0}=0\ \mbox{ in }\ {\Omega }_{-}.$
Then the first relations in (\ref{3.3-new1f}) and (\ref{3.3-new2a}) show that $\boldsymbol{\Phi} ^0={\bf 0},\ \boldsymbol{\varphi} ^0={\bf 0}.$
Therefore, the Fredholm operator of index zero ${\mathcal T}_{\alpha; \mu }:{\mathbb X}\to {\mathbb X}$ is one-to-one and hence an isomorphism, as asserted.
Then equation (\ref{3.7operator}) has a unique solution $(\boldsymbol{\Phi} ,\boldsymbol{\varphi} )^{\top }\in {\mathbb X}$ and the layer potentials (\ref{3.3-new}) and (\ref{3.1-new}), together with relations (\ref{3.3-new-s1}) and (\ref{3.3-new-s-m}), determine a solution $\left(({\bf u}_{-},\pi _{-}),({\bf u}_{+},\pi _{+})\right)\in {\mathcal X}$ of the Poisson problem of transmission type (\ref{Poisson-Brinkman-a-Besov-Dirichlet}). By Lemma~\ref{well-posedness-Poisson-Besov-Dirichlet-L}, this solution is unique.
{Moreover, the linearity and} boundedness of the potential {operators} involved in (\ref{3.3-new-s1}), (\ref{3.3-new-s-m}), (\ref{3.3-new}) and (\ref{3.1-new}) and {of} the inverse of the isomorphism ${\mathcal T}_{\alpha ;\mu }:{\mathbb X}\to {\mathbb X}$ implies that there exists a linear continuous operator \eqref{T-1} delivering the solution. In addition, the conditions ${\bf u}_{-}\in {\mathcal H}_{\rm{div}}^{1}(\Omega _{-})^3$ and (\ref{behavior-infty-sm-ms}) imply that ${\bf u}_{-}$ vanishes at infinity in the sense of Leray (\ref{behavior-infinity-linear0}).
\end{proof}
\begin{rem}
\label{isom}
Because of the involvement of the given function $\tilde{\bf f}_-$ in the conormal derivative, the left hand side operator of transmission problem \eqref{Poisson-Brinkman-a-Besov-Dirichlet}, as written is generally nonlinear with respect to $\left(({\bf u}_{+},\pi _{+}),({\bf u}_{-},\pi _{-})\right)$.
In spite of this, the problem can be equivalently reduced to the weak form for a linear operator $($similar, e.g., to the weak formulations for Dirichlet, Neumann and mixed problems in \cite[Section 3.2]{Mikh}, \cite[Section 5.3]{Mikh-3}$)$. Moreover, by Theorem $\ref{well-posedness-Poisson-Besov-Dirichlet0}$ the operator ${\mathcal T}:{\mathcal Y}\to{\mathcal X}$ delivering the problem solution $\left(({\bf u}_{+},\pi _{+}),({\bf u}_{-},\pi _{-})\right)\in {\mathcal X}$ is linear.
\end{rem}

From Theorem~\ref{well-posedness-Poisson-Besov-Dirichlet0} we can easily deduce the more general assertion, where ${\bf u}_{\infty }$ may be nonzero.
\begin{thm}
\label{well-posedness-Poisson-Besov-Dirichlet}
Let $\Omega _{+}:={\Omega }\subset {\mathbb R}^3$ be a bounded Lipschitz domain with connected boundary $\partial \Omega $. Let ${\Omega }_{-}:={\mathbb R}^3\setminus \overline{\Omega }$ be the exterior Lipschitz domain. Let $\alpha >0, \mu >0$ and condition \eqref{positive-definite-lambda} hold.
Then for $\big({\tilde{\bf f}}_{+},{\tilde{\bf f}}_{-},{\bf h}_0,{\bf g}_{0},{\bf u}_{\infty }\big)\in {\mathcal Y}_\infty$, the transmission problem \eqref{Poisson-Brinkman-a-Besov-Dirichlet}
has a unique solution $\left({\bf u}_{+},\pi _{+},{\bf u}_{-},\pi _{-}\right)$ satisfying {condition} \eqref{udiff}.
Moreover, there exists a constant $C\equiv C({\Omega }_{+},{\Omega }_{-},{\mathcal P},\!\alpha ,{\mu)}>0$ such that
%$C\equiv C({\Omega }_{+},{\Omega }_{-},{\mathcal P},\!\alpha ,{\bf u}_{\infty })>0$
\begin{align}
\label{estimate-Poisson-Besov}
\|\left({\bf u}_{+},\pi _{+},{\bf u}_{-}-{\bf u}_{\infty },\pi _{-}\right)\|_{{\mathcal X}}&\leq C\big\|\big({\tilde{\bf f}}_{+},{\tilde{\bf f}}_{-},{\bf h}_0,{\bf g}_{0},{\bf u}_{\infty }\big)\big\|_{\mathcal Y_\infty},
\end{align}
and ${\bf u}_{-}-{\bf u}_{\infty }$ vanishes at infinity in the sense of Leray \eqref{behavior-infty-sm}.
%\begin{align}
%\label{behavior-infinity-linear}
%\lim_{|{\bf x}|\to \infty }\int _{S^2}|{\bf u}_{-}({\bf y}|{\bf x}|)-{\bf u}_{\infty }|d\sigma _{\bf y}=0.
%\end{align}
\end{thm}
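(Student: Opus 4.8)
The plan is to remove the prescribed limit at infinity by a fixed divergence-free lift, thereby reducing \eqref{Poisson-Brinkman-a-Besov-Dirichlet}, \eqref{udiff} to the already established case $\mathbf u_\infty=\mathbf 0$ of Theorem~\ref{well-posedness-Poisson-Besov-Dirichlet0}. First I would fix a cut-off $\chi\in C^\infty(\mathbb R^3)$ with $\chi\equiv 0$ on a neighbourhood of $\overline{\Omega_+}$ and $\chi\equiv 1$ outside a large ball, set $\mathbf A(\mathbf x):=\tfrac12\,\mathbf u_\infty\times\mathbf x$ (so that ${\rm curl}\,\mathbf A=\mathbf u_\infty$ and $\triangle\mathbf A=\mathbf 0$), and define $\mathbf U_\infty:={\rm curl}(\chi\mathbf A)$. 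Then $\mathbf U_\infty$ is smooth on $\Omega_-$, ${\rm div}\,\mathbf U_\infty=0$, $\mathbf U_\infty=\mathbf u_\infty$ outside a ball, and $\mathbf U_\infty=\mathbf 0$ near $\partial\Omega$; moreover $\mathbf U_\infty-\mathbf u_\infty={\rm curl}\big((\chi-1)\mathbf A\big)$ has compact support, hence lies in $\mathcal H^1_{\rm div}(\Omega_-)^3$, and $\triangle\mathbf U_\infty={\rm curl}\big(\triangle(\chi\mathbf A)\big)$ is smooth with compact support in $\Omega_-$ away from $\partial\Omega$, hence lies in $\widetilde{\mathcal H}^{-1}(\Omega_-)^3$; all of these depend linearly and boundedly on $\mathbf u_\infty\in\mathbb R^3$.

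Next I would perform the substitution $\mathbf u_-':=\mathbf u_--\mathbf U_\infty$, $\tilde{\mathbf f}_-':=\tilde{\mathbf f}_--\triangle\mathbf U_\infty$, noting $\tilde{\mathbf f}_-'\in\widetilde{\mathcal H}^{-1}(\Omega_-)^3$. Since $\mathbf U_\infty-\mathbf u_\infty\in\mathcal H^1_{\rm div}(\Omega_-)^3$, requirement \eqref{udiff} is equivalent to $(\mathbf u_+,\pi_+,\mathbf u_-',\pi_-)\in\mathcal X$; since $\mathbf U_\infty$ and $\triangle\mathbf U_\infty$ vanish near $\partial\Omega$, Remark~\ref{CSD} gives $\gamma_-\mathbf u_-=\gamma_-\mathbf u_-'$ and $\mathbf t_0^-(\mathbf u_-,\pi_-;\tilde{\mathbf f}_-)=\mathbf t_0^-(\mathbf u_-',\pi_-;\tilde{\mathbf f}_-')$; and $\triangle\mathbf u_--\nabla\pi_-=\tilde{\mathbf f}_-|_{\Omega_-}$ turns into $\triangle\mathbf u_-'-\nabla\pi_-=\tilde{\mathbf f}_-'|_{\Omega_-}$. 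Thus $(\mathbf u_+,\pi_+,\mathbf u_-,\pi_-)$ solves \eqref{Poisson-Brinkman-a-Besov-Dirichlet}, \eqref{udiff} with data $(\tilde{\mathbf f}_+,\tilde{\mathbf f}_-,\mathbf h_0,\mathbf g_0,\mathbf u_\infty)$ if and only if $(\mathbf u_+,\pi_+,\mathbf u_-',\pi_-)\in\mathcal X$ solves \eqref{Poisson-Brinkman-a-Besov-Dirichlet} with data $(\tilde{\mathbf f}_+,\tilde{\mathbf f}_-',\mathbf h_0,\mathbf g_0)\in\mathcal Y$ and vanishing limit at infinity.

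Then I would invoke Theorem~\ref{well-posedness-Poisson-Besov-Dirichlet0} to obtain a unique $(\mathbf u_+,\pi_+,\mathbf u_-',\pi_-)\in\mathcal X$ for the data $(\tilde{\mathbf f}_+,\tilde{\mathbf f}_-',\mathbf h_0,\mathbf g_0)$, together with the bound \eqref{estimate-Poisson-Besov0}; setting $\mathbf u_-:=\mathbf u_-'+\mathbf U_\infty$ produces a solution of the original problem, whose uniqueness is exactly Lemma~\ref{well-posedness-Poisson-Besov-Dirichlet-L}. Estimate \eqref{estimate-Poisson-Besov} then follows from $\|(\mathbf u_+,\pi_+,\mathbf u_--\mathbf u_\infty,\pi_-)\|_{\mathcal X}\le\|(\mathbf u_+,\pi_+,\mathbf u_-',\pi_-)\|_{\mathcal X}+\|\mathbf U_\infty-\mathbf u_\infty\|_{\mathcal H^1(\Omega_-)^3}$ combined with \eqref{estimate-Poisson-Besov0}, the bound $\|\tilde{\mathbf f}_-'\|_{\widetilde{\mathcal H}^{-1}(\Omega_-)^3}\le\|\tilde{\mathbf f}_-\|_{\widetilde{\mathcal H}^{-1}(\Omega_-)^3}+C|\mathbf u_\infty|$, and the linear boundedness of $\mathbf u_\infty\mapsto\mathbf U_\infty$. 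Finally, since $\mathbf u_--\mathbf u_\infty=\mathbf u_-'+(\mathbf U_\infty-\mathbf u_\infty)\in\mathcal H^1(\Omega_-)^3$, the Corollary following Definition~\ref{behavior-infinity-sm} shows that $\mathbf u_--\mathbf u_\infty$ satisfies \eqref{behavior-infty-sm-ms}, i.e.\ $\mathbf u_-$ tends to $\mathbf u_\infty$ at infinity in the sense of Leray \eqref{behavior-infty-sm}. The only step requiring real care, and hence the \emph{main obstacle}, is producing the lift $\mathbf U_\infty$ realizing solenoidality, the prescribed constant limit, vanishing near $\partial\Omega$, and $\triangle\mathbf U_\infty\in\widetilde{\mathcal H}^{-1}(\Omega_-)^3$ simultaneously — writing it as ${\rm curl}(\chi\mathbf A)$ with $\mathbf A=\tfrac12\,\mathbf u_\infty\times\mathbf x$ makes all four properties immediate, so no serious difficulty remains beyond what is already contained in Theorem~\ref{well-posedness-Poisson-Besov-Dirichlet0}.
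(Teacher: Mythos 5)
Your argument is correct, but it takes a genuinely different (and more elaborate) route than the paper. The paper simply sets ${\bf v}_{+}={\bf u}_{+}$, ${\bf v}_{-}={\bf u}_{-}-{\bf u}_{\infty}$: since a constant vector is divergence-free, harmonic, has vanishing strain ${\mathbb E}({\bf u}_\infty)=0$ (so the conormal derivative is unchanged), and restricts to $\partial\Omega$ as an element of $H^{\frac12}(\partial\Omega)^3$, this substitution leaves the volume data $\tilde{\bf f}_\pm$ untouched and merely shifts the interface data to ${\bf h}_0+{\bf u}_\infty$ and ${\bf g}_0-\frac12{\mathcal P}\,{\bf u}_\infty$, after which Theorem \ref{well-posedness-Poisson-Besov-Dirichlet0} applies verbatim; the extra $|{\bf u}_\infty|$ in \eqref{estimate-Poisson-Besov} comes from these shifted data. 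You instead subtract the solenoidal lift ${\bf U}_\infty={\rm curl}(\chi{\bf A})$, which keeps the interface data fixed but moves ${\bf u}_\infty$ into the volume term via $\tilde{\bf f}_-'=\tilde{\bf f}_--\triangle{\bf U}_\infty$. Your construction is sound — the four properties of ${\bf U}_\infty$ you list do hold, the invariance of $\gamma_-$ and of ${\bf t}_0^-$ follows from Remark~\ref{CSD} exactly as you say, and the estimate closes — but the ``main obstacle'' you identify is self-imposed: no lift is needed precisely because the function being subtracted is constant. Your route would become the natural one in a situation where the prescribed behaviour at infinity were a non-constant solenoidal field that cannot be absorbed into the boundary data; here it only adds overhead relative to the paper's two-line reduction.
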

\begin{proof}
{Let us introduce} the new variables
%\begin{align}
%\label{linear-sm}
$
{\bf v}_{+}={\bf u}_{+},\ \ {\bf v}_{-}={\bf u}_{-}-{\bf u}_{\infty},
$
%\end{align}
and write {problem} (\ref{Poisson-Brinkman-a-Besov-Dirichlet}), \eqref{udiff} in the equivalent form
\begin{equation}
\label{Poisson-Brinkman-a-Besov-Dirichlet-sm}
\!\!\!\!\!\!\!\left\{\begin{array}{lll}
\triangle {\bf v}_{+}-\alpha {\bf v}_{+}-\nabla \pi _{+}={\tilde{\bf f}}_{+}|_{\Omega _{+}}\
%{\rm{div}}\ {\bf v}_{+}=0\
\mbox{ in }\ {\Omega }_{+},\\
\triangle {\bf v}_{-}-\nabla \pi _{-}={\tilde{\bf f}}_{-}|_{\Omega _{-}}\
%{\rm{div}}\ {\bf v}_{-}=0\
\mbox{ in }\ {\Omega }_{-},\\
\gamma_{+}{\bf v}_{+}-\gamma_{-}{\bf v}_{-}={\bf h}_0+{\bf u}_{\infty} \mbox{ on } \partial\Omega ,\\
{\bf t}_{\alpha }^{+}({\bf v}_{+},\pi _{+};{\tilde{\bf f}}_{+})-
\mu {\bf t}_{0}^{-}({\bf v}_{-},\pi _{-};{\tilde{\bf f}}_{-})+\frac{1}{2}{\mathcal P}\, \left(\gamma_{-}{\bf v}_{-}+\gamma_{+}{\bf v}_{+}\right)
%\\
%\quad \quad \quad \quad \quad \quad \quad
={\bf g}_{0}-\frac{1}{2}{\mathcal P}\, {\bf u}_{\infty } \mbox{ on } \partial\Omega
\end{array}\right.
\end{equation}
for
%\begin{equation}
%\hspace{-13em}
$
\left({\bf v}_{-},\pi _{-},{\bf v}_{+},\pi _{+}\right)\in {\mathcal X},
$
%\end{equation}
already considered in Theorem~\ref{well-posedness-Poisson-Besov-Dirichlet0}. Since its right hand side includes the constant vector ${\bf u}_{\infty }$, it appears in the right-hand side of estimate \eqref{estimate-Poisson-Besov}.
\end{proof}

\begin{rem}
\label{transmission-infty}
If ${\tilde{\bf f}}_{-}={\bf 0}$ then by Lemma $\ref{infty}$ the unique solution of transmission problem \eqref{Poisson-Brinkman-a-Besov-Dirichlet} satisfies
\begin{align}
\label{behavior-infinity-linear-ms1}
({\bf u}_{-}-{\bf u}_{\infty })({\bf x})=O(|{\bf x}|^{-1}), \nabla {\bf u}_{-}({\bf x})=O(|{\bf x}|^{-2}), \pi _{-}({\bf x})=O(|{\bf x}|^{-2}) \mbox{ as } |{\bf x}|\to \infty .
\end{align}
\end{rem}

\section{Transmission problem for the Stokes and Darcy-Forchheimer-Brinkman systems in complementary Lipschitz domains {in} weighted Sobolev spaces}\label{S5}

We now consider the Poisson problem of transmission type for the Stokes and Darcy-Forchheimer-Brinkman systems in the complementary Lipschitz domains $\Omega _{\pm }$ in ${\mathbb R}^3$,
\begin{equation}
\label{Poisson-Brinkman}
\left\{\begin{array}{lll}
\triangle {\bf u}_{+}-\alpha {\bf u}_{+}-k|{\bf u}_{+}|{\bf u}_{+}-\beta ({\bf u}_{+}\cdot \nabla ){\bf u}_{+}-\nabla \pi _{+}={\tilde{\bf f}}_{+}|_{\Omega _{+}}\
%{\rm{div}}\ {\bf u}_{+}=0\
\mbox{ in }\ {\Omega }_{+},\\
\triangle {\bf u}_{-}-\nabla \pi _{-}={\tilde{\bf f}}_{-}|_{\Omega _{-}}\
%{\rm{div}}\ {\bf u}_{-}=0\
\mbox{ in }\ {\Omega }_{-},\\
\gamma_{+}{\bf u}_{+}-\gamma_{-}{\bf u}_{-}={\bf h}_0 \mbox{ on } \partial\Omega  ,\\
{\bf t}_{\alpha }^{+}\left({\bf u}_{+},\pi _{+};
{\tilde{\bf f}}_{+}+\mathring E_+\left(k|{\bf u}_{+}|{\bf u}_{+}+\beta ({\bf u}_{+}\cdot \nabla ){\bf u}_{+}\right)\right)
-\mu {\bf t}_{0}^{-}\left({\bf u}_{-},\pi _{-};{\tilde{\bf f}}_{-}\right)
\\
\hspace{10em}
+\frac{1}{2}{\mathcal P}\, \left(\gamma_{-}{\bf u}_{-}+\gamma_{+}{\bf u}_{+}\right)
={\bf g}_{0} \mbox{ on } \partial\Omega ,
%{\bf u}_{-}({\bf x})-{\bf u}_{\infty }\to 0,\ \pi _{-}({\bf x})\to 0\ \mbox{ as }\ |{\bf x}|\to \infty
\end{array} \right.
\end{equation}
where $\mathring E_\pm$ are the operators of extension of functions defined in $\Omega_\pm$ by zero to $\overline\Omega_\mp$. For the definition of ${\bf t}_{0}^{-}\left({\bf u}_{-},\pi _{-};{\tilde{\bf f}}_{-}\right)$ see also comment below \eqref{positive-definite-lambda}.

We assume that the given data in (\ref{Poisson-Brinkman}) {and a prescribed constant ${\bf u}_{\infty }$} belong to the space {$\mathcal Y_\infty$ defined in (\ref{space-data-inf})}, and that they are sufficiently small in a sense that will be described below. {Then we show the existence and uniqueness of the solution $\left({\bf u}_{+},\pi _{+},{\bf u}_{-},\pi _{-}\right)$ of {transmission problem} (\ref{Poisson-Brinkman}), such that $\left({\bf u}_{+},\pi _{+},{\bf u}_{-}-{\bf u}_{\infty },\pi _{-}\right)$ belongs to the weighted Sobolev space ${\mathcal X}$ introduced in \eqref{space}. Such a solution tends to the constant ${\bf u}_{\infty }\in {\mathbb R}^3$ at infinity in the sense of Leray (\ref{behavior-infty-sm}).} The proof of the existence and uniqueness result is based on the well-posedness result in Theorem \ref{well-posedness-Poisson-Besov-Dirichlet} and on a fixed point theorem.

First we show the following result that plays a main role in the proof of {the well-posedness} (cf. also \cite{K-L-W2}-\cite{K-L-W1}).
\begin{lem}
\label{embeddings}
Let $\Omega _{+}\subset {\mathbb R}^3$ be a bounded Lipschitz domain with connected boundary. Let $k,\beta \in {\mathbb R}$ be nonzero constants and
\begin{align}
\label{Newtonian-oper-Brinkman1-s}
{\mathcal I}_{k;\beta ;{\Omega }_{+}}({\bf v}):=\mathring E_+\left(k|{\bf v}|{\bf v}+\beta ({\bf v}\cdot \nabla ){\bf v}\right).
\end{align}
Then the nonlinear {operators}
\begin{align}
{{\mathcal I}_{k;\beta ;{\Omega }_{+}}:H_{\rm{div}}^{1}({\Omega }_{+})^3\to
L^\frac{3}{2}({\Omega }_{+})^3
%\cap L^\frac{3}{2}({\mathbb R^3})^3
,\quad }
{\mathcal I}_{k;\beta ;{\Omega }_{+}}:H_{\rm{div}}^{1}({\Omega }_{+})^3\to
\widetilde{H}^{-1}({\Omega }_{+})^3\nonumber
\end{align}
{are} continuous, positively homogeneous of the order $2$, and bounded, in the sense that there {exist two constants $c'_1\equiv c'_1({\Omega }_{+}{,k,\beta})>0$ and} $c_1\equiv c_1({\Omega }_{+}{,k,\beta})>0$
such that
\begin{align}\label{Ibound}
\|{\mathcal I}_{k;\beta ;{\Omega }_{+}}({\bf u})\|_{{L^\frac{3}{2}({\Omega }_{+})^3}}\leq c'_1 \|{\bf u}\|_{H^{1}({\Omega }_{+})^3}^2, \quad
\|{\mathcal I}_{k;\beta ;{\Omega }_{+}}({\bf u})\|_{{\widetilde{H}^{-1}({\Omega }_{+})^3}}&\leq c_1 \|{\bf u}\|_{H^{1}({\Omega }_{+})^3}^2.
\end{align}
Moreover, 
\begin{align}
\label{estimate-IL}
\|{\mathcal I}_{k;\beta ;{\Omega }_{+}}({\bf v})&-{\mathcal I_{k;\beta ;{\Omega }_{+}}({\bf w})}\|_{\widetilde{H}^{-1}({\Omega }_{+})^3}
\nonumber\\
&\leq {c_1}\left(\|{\bf v}\|_{H^{1}({\Omega }_{+})^3}+\|{\bf w}\|_{H^{1}({\Omega }_{+})^3}\right)
\|{\bf v}\!-\!{\bf w}\|_{H^{1}({\Omega }_{+})^3},
\ \forall\ {\bf v},{\bf w}\in H^{1}({\Omega }_{+})^3
\end{align}
with the same constant $c_1$ as in \eqref{Ibound}.
\end{lem}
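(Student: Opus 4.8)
The plan is to reduce every assertion to a single estimate in $L^{3/2}(\Omega_+)^3$ and then transfer it to $\widetilde H^{-1}(\Omega_+)^3$ by a duality embedding. Positive homogeneity of order $2$ is immediate from the definition: for $t>0$ one has $|t\mathbf v|(t\mathbf v)=t^2|\mathbf v|\mathbf v$ and $(t\mathbf v\cdot\nabla)(t\mathbf v)=t^2(\mathbf v\cdot\nabla)\mathbf v$, and $\mathring E_+$ is linear, so $\mathcal I_{k;\beta;\Omega_+}(t\mathbf v)=t^2\mathcal I_{k;\beta;\Omega_+}(\mathbf v)$; in particular $\mathcal I_{k;\beta;\Omega_+}(\mathbf 0)=\mathbf 0$. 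Throughout I use the Sobolev embeddings $H^1(\Omega_+)\hookrightarrow L^6(\Omega_+)\hookrightarrow L^3(\Omega_+)$, valid since $\Omega_+$ is a bounded Lipschitz domain in $\mathbb R^3$.

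The core step is to bound $k(|\mathbf v|\mathbf v-|\mathbf w|\mathbf w)+\beta((\mathbf v\cdot\nabla)\mathbf v-(\mathbf w\cdot\nabla)\mathbf w)$ in $L^{3/2}(\Omega_+)^3$. For the Forchheimer part I would use the elementary inequality $\bigl||\mathbf a|\mathbf a-|\mathbf b|\mathbf b\bigr|\le(|\mathbf a|+|\mathbf b|)|\mathbf a-\mathbf b|$, $\mathbf a,\mathbf b\in\mathbb R^3$, which follows from $|\mathbf a|\mathbf a-|\mathbf b|\mathbf b=|\mathbf a|(\mathbf a-\mathbf b)+(|\mathbf a|-|\mathbf b|)\mathbf b$ and $\bigl||\mathbf a|-|\mathbf b|\bigr|\le|\mathbf a-\mathbf b|$; for the convective part I would use the splitting $(\mathbf v\cdot\nabla)\mathbf v-(\mathbf w\cdot\nabla)\mathbf w=((\mathbf v-\mathbf w)\cdot\nabla)\mathbf v+(\mathbf w\cdot\nabla)(\mathbf v-\mathbf w)$. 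H\"older's inequality with the exponent pair $(3,3)$ applied to $(|\mathbf v|+|\mathbf w|)|\mathbf v-\mathbf w|$ and with the pair $(6,2)$ applied to $|\mathbf v-\mathbf w|\,|\nabla\mathbf v|$ and to $|\mathbf w|\,|\nabla(\mathbf v-\mathbf w)|$ (in each case the reciprocals sum to $\frac23$), combined with the embeddings above, then yields a constant $c_1'=c_1'(\Omega_+,k,\beta)>0$ with
\[
\| k(|\mathbf v|\mathbf v-|\mathbf w|\mathbf w)+\beta((\mathbf v\cdot\nabla)\mathbf v-(\mathbf w\cdot\nabla)\mathbf w) \|_{L^{3/2}(\Omega_+)^3}\le c_1'(\|\mathbf v\|_{H^1(\Omega_+)^3}+\|\mathbf w\|_{H^1(\Omega_+)^3})\|\mathbf v-\mathbf w\|_{H^1(\Omega_+)^3}.
\]
As $\mathring E_+$ preserves the $L^{3/2}$-norm, the left-hand side equals $\|\mathcal I_{k;\beta;\Omega_+}(\mathbf v)-\mathcal I_{k;\beta;\Omega_+}(\mathbf w)\|_{L^{3/2}(\Omega_+)^3}$; taking $\mathbf w=\mathbf 0$ gives the first inequality in \eqref{Ibound}, and continuity of $\mathcal I_{k;\beta;\Omega_+}$ into $L^{3/2}(\Omega_+)^3$ (hence on $H^1_{\mathrm{div}}(\Omega_+)^3$) follows since a convergent sequence in $H^1$ is bounded.

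It remains to pass to $\widetilde H^{-1}(\Omega_+)^3=(H^1(\Omega_+)^3)'$ (recall \eqref{duality-spaces}) via the continuous embedding $L^{3/2}(\Omega_+)\hookrightarrow\widetilde H^{-1}(\Omega_+)$: for $g\in L^{3/2}(\Omega_+)$ the functional $w\mapsto\langle\mathring E_+g,w\rangle_{\Omega_+}=\int_{\Omega_+}gw\,dx$ is bounded on $H^1(\Omega_+)$ because $\left|\int_{\Omega_+}gw\,dx\right|\le\|g\|_{L^{3/2}(\Omega_+)}\|w\|_{L^3(\Omega_+)}\le C\|g\|_{L^{3/2}(\Omega_+)}\|w\|_{H^1(\Omega_+)}$, so that $\mathring E_+g\in\widetilde H^{-1}(\Omega_+)$ with $\|\mathring E_+g\|_{\widetilde H^{-1}(\Omega_+)}\le C\|g\|_{L^{3/2}(\Omega_+)}$. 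Composing this with the displayed estimate gives \eqref{estimate-IL} with $c_1:=C\,c_1'$, the case $\mathbf w=\mathbf 0$ gives the second inequality in \eqref{Ibound} with the same $c_1$ (using $\mathcal I_{k;\beta;\Omega_+}(\mathbf 0)=\mathbf 0$), and continuity into $\widetilde H^{-1}(\Omega_+)^3$ follows as before. I expect the only delicate point to be this last transfer step, i.e.\ checking that the zero-extension $\mathring E_+$, the identification $\widetilde H^{-1}(\Omega_+)=(H^1(\Omega_+))'$, and the pairing $\langle\cdot,\cdot\rangle_{\Omega_+}$ are used consistently; the pointwise and H\"older estimates are routine. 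Note that the divergence-free constraint on the arguments is never used in these estimates; it is imposed merely because $H^1_{\mathrm{div}}(\Omega_+)^3$ is the space on which $\mathcal I_{k;\beta;\Omega_+}$ acts in the sequel.
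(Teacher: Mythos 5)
Your proof is correct and follows essentially the same route as the paper: Sobolev embedding $H^1(\Omega_+)\hookrightarrow L^q(\Omega_+)$, H\"older, the splittings $|\mathbf v|\mathbf v-|\mathbf w|\mathbf w=(|\mathbf v|-|\mathbf w|)\mathbf v+|\mathbf w|(\mathbf v-\mathbf w)$ and $(\mathbf v\cdot\nabla)\mathbf v-(\mathbf w\cdot\nabla)\mathbf w=((\mathbf v-\mathbf w)\cdot\nabla)\mathbf v+(\mathbf w\cdot\nabla)(\mathbf v-\mathbf w)$, and the dual embedding of a Lebesgue space into $\widetilde H^{-1}(\Omega_+)=(H^1(\Omega_+))'$. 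The only (harmless) deviation is that you estimate the Forchheimer term in $L^{3/2}$ via $L^3\times L^3$ H\"older, so both nonlinear terms land in $L^{3/2}$ and a single dual embedding suffices, whereas the paper estimates that term in $L^2$ via $L^4\times L^4$ and uses the embeddings $L^2\hookrightarrow\widetilde H^{-1}$ and $L^{3/2}\hookrightarrow\widetilde H^{-1}$ separately.
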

\begin{proof}
Since $\Omega _{+}$ is a bounded Lipschitz domain in ${\mathbb R}^3$, the Sobolev {embedding theorem} implies that the following inclusion
\begin{align}
\label{Newtonian-D-B-F-new5-new1a-S}
H^{1}({\Omega }_{+})^3\hookrightarrow L^q({\Omega }_{+})^3
\end{align}
is continuous for any $2\leq q \leq 6$. Let $q'$ be such that $\dfrac{1}{q'}:=1-\dfrac{1}{q}$. Since {embedding} (\ref{Newtonian-D-B-F-new5-new1a-S}) has also dense range, a duality argument implies the continuity of the embedding
\begin{equation}
\label{Newtonian-D-B-F-Robin2}
L^{q'}({\Omega }_{+})^3\hookrightarrow \widetilde{H}^{-1}({\Omega }_{+})^3,
\end{equation}
in the sense that $\mathring E_+{\bf u}\in \widetilde{H}^{-1}({\Omega }_{+})^3$ for any ${\bf u}\in L^{q'}({\Omega }_{+})^3${, $\dfrac{6}{5}\le q'\le 2$}, and there exists a constant $C_q>0$ %independent of ${\bf u}$
such that
\begin{align}
\label{Newtonian-D-B-F-Robin2-S}
\|\mathring E_+{\bf u}\|_ {\widetilde{H}^{-1}({\Omega }_{+})^3}\le C_q\|{\bf u}\|_{L^{q'}({\Omega }_{+})^3}.
\end{align}
By (\ref{Newtonian-D-B-F-new5-new1a-S}) with $q=4$ and by the H\"{o}lder inequality there exists a constant $c'\equiv c'({\Omega }_{+})>0$ such that
\begin{align}
\label{Newtonian-D-B-F-new5-new1-s}
\|\ |{\bf v}|{\bf w}\ \|_{L^2({\Omega }_{+})^3}&\leq \|{\bf v}\|_{L^4({\Omega }_{+})^3}\|{\bf w}\|_{L^4({\Omega }_{+})^3}
%\nonumber\\
%&
\leq c'\|{\bf v}\|_{H^{1}({\Omega }_{+})^3}\|{\bf w}\|_{H^{1}({\Omega }_{+})^3},\ \forall \ {\bf v},\, {\bf w}\in H^{1}({\Omega }_{+})^3.
\end{align}
Hence, we obtain that $|{\bf v}|{\bf w}\in L^2({\Omega }_{+})^3$ for all ${\bf v},\, {\bf w}\in H^{1}({\Omega }_{+})^3$. Moreover, (\ref{Newtonian-D-B-F-Robin2-S}), (\ref{Newtonian-D-B-F-new5-new1-s}) imply that the positively homogeneous operator $b$ of order $2$ defined by
$b({\bf v},{\bf w}):=\mathring E_+(|{{\bf v}|{\bf w}})$ maps $H^{1}({\Omega }_{+})^3\times H^{1}({\Omega }_{+})^3$ to $\widetilde{H}^{-1}({\Omega }_{+})^3$ and satisfies the inequality
\begin{align}
\label{Newtonian-D-B-F-new5-new1}
\|b({\bf v},{\bf w})\|_{{\widetilde{H}^{-1}({\Omega }_{+})^3}}&\leq c \|{\bf v}\|_{H^{1}({\Omega }_{+})^3}\|{\bf w}\|_{H^{1}({\Omega }_{+})^3},
\end{align}
for some constant $c\equiv c({\Omega }_{+})>0$. Thus, the following {operators are} bounded
$$b: H^{1}({\Omega }_{+})^3\times H^{1}({\Omega }_{+})^3\to L^{2}({\Omega }_{+})^3,\quad
b: H^{1}({\Omega }_{+})^3\times H^{1}({\Omega }_{+})^3\to \widetilde{H}^{-1}({\Omega }_{+})^3.$$

Moreover, the {embedding} (\ref{Newtonian-D-B-F-new5-new1a-S}) with $q=6$ and the H\"{o}lder inequality imply that there exists a constant $c_0'\equiv c_0'({\Omega }_{+})>0$ such that
\begin{align}
\label{Newtonian-D-B-F-Robin1-s}
\!\!\!\!\|({\bf v}\cdot \nabla ){\bf w}\|_{L^{\frac{3}{2}}({\Omega }_{+})^3}&\leq \|{\bf v}\|_{L^{6}({\Omega }_{+})^3}\|\nabla {\bf w}\|_{L^{2}({\Omega }_{+})^3}\leq c_0'\|{\bf v}\|_{H^{1}({\Omega }_{+})^3}
\|{\bf w}\|_{H^{1}({\Omega }_{+})^3} \ \forall \ {\bf v},\, {\bf w}\in H^{1}({\Omega }_{+})^3.
\end{align}
Hence, $({\bf v}\cdot \nabla ){\bf w}\in L^{\frac{3}{2}}({\Omega }_{+})^3$ for all ${\bf v},\, {\bf w}\in H^{1}({\Omega }_{+})^3$, and by (\ref{Newtonian-D-B-F-Robin2-S}) and (\ref{Newtonian-D-B-F-Robin1-s}) for the bi-linear operator
$n({\bf v},{\bf w}):={\mathring E_+({\bf v}\cdot \nabla ){\bf w}}$, there exists a constant $c_0\equiv c_0({\Omega }_{+})>0$ such that
\begin{align}
\label{Newtonian-D-B-F-Robin1}
\|n({\bf v},{\bf w})\|_{{\widetilde{H}^{-1}({\Omega }_{+})^3}}&\leq c_0\|{\bf v}\|_{H^{1}({\Omega }_{+})^3}\|{\bf w}\|_{H^{1}({\Omega }_{+})^3},\ \forall \ {\bf v},{\bf w}\in H^{1}({\Omega }_{+})^3.
\end{align}
Thus, the {operators
$$n: H^{1}({\Omega }_{+})^3\times H^{1}({\Omega }_{+})^3\to L^\frac{3}{2}({\Omega }_{+})^3,\ \
n: H^{1}({\Omega }_{+})^3\times H^{1}({\Omega }_{+})^3\to \widetilde{H}^{-1}({\Omega }_{+})^3$$
are bounded in the sense of \eqref{Newtonian-D-B-F-Robin1-s} and} (\ref{Newtonian-D-B-F-Robin1}).

By definition \eqref{Newtonian-oper-Brinkman1-s}, the operator ${\mathcal I}_{k;\beta ;{\Omega }_{+}}$ is positively homogeneous of the order 2 and since
${\mathcal I}_{k;\beta ;{\Omega }_{+}}({\bf v})={k b({\bf v},{\bf v})+\beta n({\bf v},{\bf v})}$,
it is also bounded in the sense of \eqref{Ibound} due to (\ref{Newtonian-D-B-F-new5-new1-s})-(\ref{Newtonian-D-B-F-Robin1}) with
\begin{align}
\label{const}
c'_1={|k|}c'+{|\beta |}c'_0 \text{ and } c_1={|k|}c+{|\beta |}c_0.
\end{align}

We now prove \eqref{estimate-IL} and thus the continuity of the operator ${\mathcal I}_{k;\beta ;{\Omega }_{+}}$. By (\ref{Newtonian-D-B-F-new5-new1}) and (\ref{Newtonian-D-B-F-Robin1}),
\begin{align}
\label{continuity}
&\|{\mathcal I}_{k;\beta ;{\Omega }_{+}}({\bf v})-{\mathcal I}_{k;\beta ;{\Omega }_{+}}({\bf w})\|_{\widetilde{H}^{-1}({\Omega }_{+})^3}
\nonumber\\
&\leq {|k|}\|\ |{\bf v}|{\bf v}-|{\bf w}|{\bf w}\ \|_{\widetilde{H}^{-1}({\Omega }_{+})^3}+{|\beta |}\|({\bf v}\cdot \nabla ){\bf v}-({\bf w}\cdot \nabla ){\bf w}\|_{\widetilde{H}^{-1}({\Omega }_{+})^3}\nonumber\\
&\leq {|k|}\|(|{\bf v}|-|{\bf w}|){\bf v}+|{\bf w}|({\bf v}-{\bf w})\|_{\widetilde{H}^{-1}({\Omega }_{+})^3}+{|\beta |}\|(({\bf v}-{\bf w})\cdot \nabla ){\bf v}+({\bf w}\cdot \nabla )({\bf v}-{\bf w})\|_{\widetilde{H}^{-1}({\Omega }_{+})^3}\nonumber\\
%&\leq c_3\Big(\|(|{\bf v}|-|{\bf w}|){\bf v}+|{\bf w}|({\bf v}-{\bf w})\|_{L^2({\Omega }_{+})^3}+\|(({\bf v}-{\bf w})\cdot \nabla ){\bf v}+({\bf w}\cdot \nabla )({\bf v}-{\bf w})\|_{L^{\frac{3}{2}}({\Omega }_{+})^3}\Big)\nonumber\\
&\leq ({|k|}c+{|\beta |}c_0)\|{\bf v}-{\bf w}\|_{H^1({\Omega }_{+})^3}\left(\|{\bf v}\|_{H^1({\Omega }_{+})^3}+
\|{\bf w}\|_{H^1({\Omega }_{+})^3}\right),\ \forall \ {\bf v},{\bf w}\in H_{\rm{div}}^{1}({\Omega }_{+})^3,
\end{align}
which, due to the second equality in \eqref{const}, proves (\ref{estimate-IL}). Thus,
$\|{\mathcal I}_{k;\beta ;{\Omega }_{+}}({\bf v})-{\mathcal I}_{k;\beta ;{\Omega }_{+}}({\bf w})\|_{\widetilde{H}^{-1}({\Omega }_{+})^3}\to 0$ as $\|{\bf v}-{\bf w}\|_{H^1({\Omega }_{+})^3}\to 0$, and the continuity of the operator ${\mathcal I}_{k;\beta ;{\Omega }_{+}}:H_{\rm{div}}^{1}({\Omega }_{+})^3\to \widetilde{H}^{-1}({\Omega }_{+})^3$ follows.
\end{proof}

{\begin{thm}%[New version of the Theorem.]
\label{existence-N-S-D-B-F}
\label{existence-N-S-D-B-Fn} Let $\Omega _{+}:=\Omega\subset {\mathbb R}^3$ be a bounded Lipschitz
domain with connected boundary. Let ${\Omega }_{-}:={\mathbb R}^3\setminus \overline{\Omega }$. Let $\alpha >0, \mu >0$ and $k, \beta\in\mathbb R$ be given constants and ${\mathcal P}\in L^{\infty }(\partial\Omega )^{3\times 3}$ be a symmetric matrix-valued function which satisfies {nonnegativity condition} \eqref{positive-definite-lambda}. Then there exist two constants $\zeta>0$ and $\eta>0$ depending only on ${\Omega }_{+}$, ${\Omega }_{-}$, ${\mathcal P}$, $\alpha $, $k$, $\beta $ and $\mu $, with the property that for all given data $\big({\tilde{\bf f}}_{+},{\tilde{\bf f}}_{-},{\bf h}_0,{\bf g}_{0},{\bf u}_{\infty }\big)\in {\mathcal Y}_\infty$, which satisfy the condition
\begin{align}
\label{cond-smalln}
\big\|({\tilde{\bf f}}_{+},{\tilde{\bf f}}_{-},{\bf h}_0,{\bf g}_{0},{\bf u}_{\infty })\big\|_{\mathcal Y_\infty}\leq \zeta ,
\end{align}
{the transmission problem} for the Stokes and Darcy-Forchheimer-Brinkman systems \eqref{Poisson-Brinkman} has a unique solution $\left({\bf u}_{+},\pi _{+},{\bf u}_{-},\pi _{-}\right)$ {such that} $\left({\bf u}_{+},\pi _{+},{\bf u}_{-}-{\bf u}_{\infty },\pi _{-}\right)\in {\mathcal X}$ {and}
{\begin{equation}
\label{inequality-N-S-D-B-Fn}
\|({\bf u}_{+},\pi _{+},{\bf u}_{-}-{\bf u}_{\infty },\pi _{-})\|_{\mathcal X}\leq \eta .
\end{equation}}
Moreover, the solution depends continuously on the given data, and satisfies the estimate
\begin{align}
\label{estimate-D-B-F-new1-new1-D-Rn2}
\|({\bf u}_{+},\pi _{+},{\bf u}_{-}-{\bf u}_{\infty },\pi _{-})\|_{\mathcal X}\leq
C\big\|({\tilde{\bf f}}_{+},{\tilde{\bf f}}_{-},{\bf h}_0,{\bf g}_{0},{\bf u}_{\infty })\big\|_{\mathcal Y_\infty}
\end{align}
with some positive constant $C$ depending only on ${\Omega }_{+}$, ${\Omega }_{-}$, ${\mathcal P}$, $\alpha $ and $\mu $,
while ${\bf u}_{-}-{\bf u}_{\infty }$ vanishes at infinity in the sense of Leray \eqref{behavior-infty-sm}.
%, i.e.,
%\begin{align}
%\label{weak-Lerayn}
%\lim_{|{\bf x}|\to \infty }\int _{S^2}|{\bf u}_{-}({\bf y}|{\bf x}|)-{\bf u}_{\infty }|d\sigma _{\bf y}=0.
%\end{align}
\end{thm}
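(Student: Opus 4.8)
The plan is to recast the nonlinear transmission problem \eqref{Poisson-Brinkman} as a fixed-point equation in the weighted space ${\mathcal X}$ and to solve it with the Banach contraction principle, the linear ingredient being Theorem \ref{well-posedness-Poisson-Besov-Dirichlet} and the nonlinear estimates being those of Lemma \ref{embeddings}. First I would observe that, in the notation \eqref{Newtonian-oper-Brinkman1-s}, the first line of \eqref{Poisson-Brinkman} reads $\triangle{\bf u}_{+}-\alpha{\bf u}_{+}-\nabla\pi_{+}=\bigl({\tilde{\bf f}}_{+}+{\mathcal I}_{k;\beta;{\Omega}_{+}}({\bf u}_{+})\bigr)|_{\Omega_{+}}$ and the conormal term in the fourth line is exactly ${\bf t}_{\alpha}^{+}\bigl({\bf u}_{+},\pi_{+};{\tilde{\bf f}}_{+}+{\mathcal I}_{k;\beta;{\Omega}_{+}}({\bf u}_{+})\bigr)$; hence \eqref{Poisson-Brinkman} for $\left({\bf u}_{+},\pi_{+},{\bf u}_{-}-{\bf u}_{\infty},\pi_{-}\right)\in{\mathcal X}$ is equivalent to the linear problem \eqref{Poisson-Brinkman-a-Besov-Dirichlet}, \eqref{udiff} with ${\tilde{\bf f}}_{+}$ replaced by ${\tilde{\bf f}}_{+}+{\mathcal I}_{k;\beta;{\Omega}_{+}}({\bf u}_{+})$. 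By Lemma \ref{embeddings}, ${\mathcal I}_{k;\beta;{\Omega}_{+}}$ maps $H^{1}_{\rm div}({\Omega}_{+})^{3}$ continuously into $\widetilde{H}^{-1}({\Omega}_{+})^{3}$, so this modified datum lies in the first factor of ${\mathcal Y}_\infty$, and since $\left({\bf u}_{+},\pi_{+}\right)\in H^{1}_{\rm div}({\Omega}_{+})^{3}\times L^{2}({\Omega}_{+})$ the triple $\left({\bf u}_{+},\pi_{+},{\tilde{\bf f}}_{+}+{\mathcal I}_{k;\beta;{\Omega}_{+}}({\bf u}_{+})\right)$ belongs to ${\pmb{H}}^{1}({\Omega}_{+},{\boldsymbol{\mathcal L}}_{\alpha})$; thus the conormal derivative is well defined by Lemma \ref{conormal-derivative-generalized-Robin}, while ${\bf t}_{0}^{-}\left({\bf u}_{-},\pi_{-};{\tilde{\bf f}}_{-}\right)$ is well defined by Remark \ref{CSD}. (If $k=\beta=0$ the problem is linear and the assertion is just Theorem \ref{well-posedness-Poisson-Besov-Dirichlet}; so from now on $c_{1}>0$, with $c_{1}$ as in Lemma \ref{embeddings}.)

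Next, let $\mathbb{T}\colon{\mathcal Y}_\infty\to{\mathcal X}$ be the solution operator of Theorem \ref{well-posedness-Poisson-Besov-Dirichlet}, assigning to $\left({\tilde{\bf f}}_{+},{\tilde{\bf f}}_{-},{\bf h}_{0},{\bf g}_{0},{\bf u}_{\infty}\right)$ the vector $\left({\bf u}_{+},\pi_{+},{\bf u}_{-}-{\bf u}_{\infty},\pi_{-}\right)\in{\mathcal X}$; it is linear (being the composition of the linear reduction in the proof of Theorem \ref{well-posedness-Poisson-Besov-Dirichlet} with the linear operator ${\mathcal T}$ of Theorem \ref{well-posedness-Poisson-Besov-Dirichlet0}, cf.\ also Remark \ref{isom}) and bounded with $\|\mathbb{T}\|\le C$, where $C\equiv C({\Omega}_{+},{\Omega}_{-},{\mathcal P},\alpha,\mu)$ is the constant of \eqref{estimate-Poisson-Besov}. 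I would then define the nonlinear map $\Psi\colon{\mathcal X}\to{\mathcal X}$ by
\begin{equation*}
\Psi\left({\bf u}_{+},\pi_{+},{\bf v}_{-},\pi_{-}\right):=\mathbb{T}\bigl({\tilde{\bf f}}_{+}+{\mathcal I}_{k;\beta;{\Omega}_{+}}({\bf u}_{+}),\,{\tilde{\bf f}}_{-},\,{\bf h}_{0},\,{\bf g}_{0},\,{\bf u}_{\infty}\bigr),
\end{equation*}
which is well defined and continuous by Lemma \ref{embeddings} and the continuity of $\mathbb{T}$. By the equivalence just described together with the uniqueness part of Theorem \ref{well-posedness-Poisson-Besov-Dirichlet}, a tuple $\left({\bf u}_{+},\pi_{+},{\bf u}_{-},\pi_{-}\right)$ is a solution of \eqref{Poisson-Brinkman} with $X:=\left({\bf u}_{+},\pi_{+},{\bf u}_{-}-{\bf u}_{\infty},\pi_{-}\right)\in{\mathcal X}$ if and only if $X$ is a fixed point of $\Psi$ (the exterior velocity being recovered as ${\bf u}_{-}={\bf v}_{-}+{\bf u}_{\infty}$).

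To produce and locate a fixed point I would run the standard self-map/contraction estimates on a closed ball $\overline{B}_{\eta}:=\{X\in{\mathcal X}:\|X\|_{{\mathcal X}}\le\eta\}$, using that $\|{\bf u}_{+}\|_{H^{1}({\Omega}_{+})^{3}}\le\|X\|_{{\mathcal X}}$. From \eqref{estimate-Poisson-Besov} and the bound in \eqref{Ibound} one gets, for $X\in\overline{B}_{\eta}$ and data satisfying \eqref{cond-smalln},
\begin{equation*}
\|\Psi(X)\|_{{\mathcal X}}\le C\bigl(\|({\tilde{\bf f}}_{+},{\tilde{\bf f}}_{-},{\bf h}_{0},{\bf g}_{0},{\bf u}_{\infty})\|_{{\mathcal Y}_\infty}+c_{1}\|X\|_{{\mathcal X}}^{2}\bigr)\le C\zeta+Cc_{1}\eta^{2},
\end{equation*}
and, since $\mathbb{T}$ is linear, $\Psi(X_{1})-\Psi(X_{2})=\mathbb{T}\bigl({\mathcal I}_{k;\beta;{\Omega}_{+}}({\bf u}_{+}^{(1)})-{\mathcal I}_{k;\beta;{\Omega}_{+}}({\bf u}_{+}^{(2)}),{\bf 0},{\bf 0},{\bf 0},{\bf 0}\bigr)$, whence by \eqref{estimate-IL}
\begin{equation*}
\|\Psi(X_{1})-\Psi(X_{2})\|_{{\mathcal X}}\le Cc_{1}\bigl(\|{\bf u}_{+}^{(1)}\|_{H^{1}({\Omega}_{+})^{3}}+\|{\bf u}_{+}^{(2)}\|_{H^{1}({\Omega}_{+})^{3}}\bigr)\|{\bf u}_{+}^{(1)}-{\bf u}_{+}^{(2)}\|_{H^{1}({\Omega}_{+})^{3}}\le 2Cc_{1}\eta\,\|X_{1}-X_{2}\|_{{\mathcal X}}
\end{equation*}
for $X_{1},X_{2}\in\overline{B}_{\eta}$. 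I would first fix $\eta>0$ so small that $2Cc_{1}\eta\le\tfrac12$, and then set $\zeta:=\eta/(2C)$; then $\|\Psi(X)\|_{{\mathcal X}}\le\tfrac{\eta}{2}+\tfrac{\eta}{4}\le\eta$, so $\Psi(\overline{B}_{\eta})\subseteq\overline{B}_{\eta}$ and $\Psi$ is a $\tfrac12$-contraction there; note $\zeta,\eta$ depend only on $C$ and $c_{1}$, hence only on ${\Omega}_{+},{\Omega}_{-},{\mathcal P},\alpha,\mu,k,\beta$. By the Banach fixed-point theorem $\Psi$ has a unique fixed point in $\overline{B}_{\eta}$, which the equivalence above turns into the unique solution of \eqref{Poisson-Brinkman} with $\left({\bf u}_{+},\pi_{+},{\bf u}_{-}-{\bf u}_{\infty},\pi_{-}\right)\in{\mathcal X}$ obeying \eqref{inequality-N-S-D-B-Fn}.

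Finally I would read off the quantitative and asymptotic consequences. Feeding the fixed point back into $X=\Psi(X)$ and using \eqref{estimate-Poisson-Besov}, \eqref{Ibound} and $Cc_{1}\eta\le\tfrac14$ gives $\|X\|_{{\mathcal X}}\le C\|({\tilde{\bf f}}_{+},{\tilde{\bf f}}_{-},{\bf h}_{0},{\bf g}_{0},{\bf u}_{\infty})\|_{{\mathcal Y}_\infty}+\tfrac14\|X\|_{{\mathcal X}}$, hence $\|X\|_{{\mathcal X}}\le\tfrac{4}{3}C\,\|({\tilde{\bf f}}_{+},{\tilde{\bf f}}_{-},{\bf h}_{0},{\bf g}_{0},{\bf u}_{\infty})\|_{{\mathcal Y}_\infty}$, which is \eqref{estimate-D-B-F-new1-new1-D-Rn2} with constant depending only on ${\Omega}_{+},{\Omega}_{-},{\mathcal P},\alpha,\mu$. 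Continuous dependence is obtained the same way: if $X_{1},X_{2}\in\overline{B}_{\eta}$ solve the problems for two admissible data sets, then $X_{1}-X_{2}$ equals $\mathbb{T}$ applied to the difference of the nonlinearly modified data, and \eqref{estimate-Poisson-Besov}, \eqref{estimate-IL}, $2Cc_{1}\eta\le\tfrac12$ yield $\|X_{1}-X_{2}\|_{{\mathcal X}}\le 2C\,\|\text{data difference}\|_{{\mathcal Y}_\infty}$. Since ${\bf u}_{-}-{\bf u}_{\infty}={\bf v}_{-}\in{\mathcal H}^{1}_{\rm div}({\Omega}_{-})^{3}\subset{\mathcal H}^{1}({\Omega}_{-})^{3}$, the Corollary following Definition \ref{behavior-infinity-sm} (applied componentwise) shows that ${\bf u}_{-}-{\bf u}_{\infty}$ vanishes at infinity in the sense of Leray \eqref{behavior-infty-sm}. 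The only delicate point in this scheme is the bookkeeping of the two constants $C$ (from the linear theory) and $c_{1}$ (from Lemma \ref{embeddings}) so that invariance and contraction hold simultaneously with $\zeta,\eta$ of the prescribed dependence; everything else is a routine application of the contraction principle.
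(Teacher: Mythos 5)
Your proposal is correct and follows essentially the same route as the paper: reduce to the linear transmission problem of Theorem \ref{well-posedness-Poisson-Besov-Dirichlet} with the datum ${\tilde{\bf f}}_{+}$ perturbed by ${\mathcal I}_{k;\beta;\Omega_{+}}({\bf u}_{+})$, use the bounds \eqref{Ibound} and \eqref{estimate-IL} of Lemma \ref{embeddings}, and apply the Banach fixed-point theorem on a small closed ball, then recover \eqref{estimate-D-B-F-new1-new1-D-Rn2} by feeding the fixed point back into the a priori estimate. The only (inessential) difference is that you run the contraction on a ball in the full product space ${\mathcal X}$, whereas the paper contracts the map $U_{+}$ on a ball in $H^{1}_{\rm div}(\Omega_{+})^{3}$ alone; since the nonlinearity depends only on ${\bf u}_{+}$ and $\|{\bf u}_{+}\|_{H^{1}(\Omega_{+})^{3}}\le\|X\|_{\mathcal X}$, the two formulations are interchangeable.
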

\begin{proof}
{Let us introduce} the new variables
\begin{align}
\label{linear-new-smn}
{\bf v}_{+}={\bf u}_{+},\ \ {\bf v}_{-}={\bf u}_{-}-{\bf u}_{\infty},
\end{align}
and write {the nonlinear problem} (\ref{Poisson-Brinkman}) in the following equivalent form
\begin{equation}
\label{Poisson-Brinkman-smn}
\left\{\begin{array}{lll}
\triangle {\bf v}_{+}-\alpha {\bf v}_{+}-\nabla \pi _{+}=
{\tilde{\bf f}}_{+}|_{\Omega _{+}}+{\mathcal I}_{k;\beta ;{\Omega }_{+}}({\bf v}_+)|_{\Omega _{+}}\
%{\rm{div}}\ {\bf v}_{+}=0\
\mbox{ in }\ {\Omega }_{+},\\
\triangle {\bf v}_{-}-\nabla \pi _{-}={\tilde{\bf f}}_{-}|_{\Omega _{-}}\
%{\rm{div}}\ {\bf v}_{-}=0\
\mbox{ in }\ {\Omega }_{-},\\
\gamma_{+}{\bf v}_{+}-\gamma_{-}{\bf v}_{-}={\bf h}_0+{\bf u}_{\infty} \mbox{ on } \partial\Omega  ,\\
{\bf t}_{\alpha }^{+}\left({\bf v}_{+},\pi _{+};
{\tilde{\bf f}}_{+}+{\mathcal I}_{k;\beta ;{\Omega }_{+}}({\bf v}_+)\right)-
\mu {\bf t}_{0}^{-}\left({\bf v}_{-},\pi _{-};{\tilde{\bf f}}_{-}\right)
\\
\hspace{8em} +\frac{1}{2}{\mathcal P}\, \left(\gamma_{-}{\bf v}_{-}+\gamma_{+}{\bf v}_{+}\right)={\bf g}_0-\frac{1}{2}{\mathcal P}\, {\bf u}_{\infty } \mbox{ on } \partial\Omega .
\end{array}\right.
\end{equation}

Next we construct a nonlinear operator ${U}_{+}$ that maps a closed ball ${\bf B}_{\eta}$ of the space  $H_{\rm{div}}^{1}({\Omega }_{+})^3$ to ${\bf B}_{\eta}$ and is a contraction on ${\bf B}_{\eta}$. Then the unique fixed point of ${U}_{+}$ will determine a solution of the nonlinear problem (\ref{Poisson-Brinkman-smn}).

$\bullet $ {\bf Construction of a nonlinear operator ${U_{+}}$}

For a fixed $\mathbf v_+\in H^1_{\rm{div}}(\Omega _{+})^3$, let us consider the following linear Poisson problem of transmission type for the Stokes and Brinkman systems in the unknown $({\bf v}_{+}^{0},\pi_{+}^{0})$, $({\bf v}_{-}^{0},\pi_{-}^{0})$
\begin{equation}
\label{Newtonian-D-B-F-new2-D-Rn}
\left\{\begin{array}{lll}
\triangle {{\bf v}^0_+}-\alpha {{\bf v}^0_+}-\nabla {\pi^0_+}=
{\tilde{\bf f}}_{+}|_{\Omega _{+}}+{\mathcal I}_{k;\beta ;\Omega_+}({\bf v}_+)|_{\Omega_+}\
%{\rm{div}}\ {{\bf v}^0_+}=0\
\mbox{ in }\ {\Omega }_{+},\\
\triangle {{\bf v}^0_-}-\nabla {\pi^0_-}={\tilde{\bf f}}_{-}|_{\Omega _{-}}\
%{\rm{div}}\ {{\bf v}^0_-}=0\
\mbox{ in }\ {\Omega }_{-},\\
\gamma_{+}{{\bf v}^0_+}-\gamma_{-}{{\bf v}^0_-}
={\bf h}_0+{\bf u}_{\infty }\mbox{ on } \partial \Omega ,\\
{\bf t}_{\alpha }^{+}\left({{\bf v}^0_+},{\pi^0_+};
{\tilde{\bf f}}_{+}+{\mathcal I}_{k;\beta ;{\Omega }_{+}}({\bf v}_+)\right)-
\mu {\bf t}_{0}^{-}({{\bf v}^0_-},{\pi^0_-};{\tilde{\bf f}}_{-})\\
\hspace{8em} +\frac{1}{2}{\mathcal P}\, \left(\gamma_{-}{\bf v}^0_{-}+\gamma_{+}{\bf v}^0_{+}\right)=
{\bf g}_0-\frac{1}{2}{\mathcal P}\, {\bf u}_{\infty } \mbox{ on } \partial \Omega .
\end{array}\right.
\end{equation}
Note that ${\bf h}_0+{\bf u}_{\infty }\in H^{\frac{1}{2}}(\partial\Omega )^3$ and ${\bf g}_0-\frac{1}{2}{\mathcal P}\, {\bf u}_{\infty }\!\in H^{-\frac{1}{2}}(\partial\Omega )^3$,
while $\mathring E_+(k|{\bf v}_{+}|{\bf v}_{+}+ \beta ({\bf v}_{+}\cdot\nabla){\bf v}_{+})\in \widetilde{H}^{-1}({\Omega }_{+})^3$ by Lemma~\ref{embeddings}. Then
Theorem \ref{well-posedness-Poisson-Besov-Dirichlet} implies that problem (\ref{Newtonian-D-B-F-new2-D-Rn}) has a unique solution
\begin{align}
\label{solution-v0}
\left({{\bf v}^0_+},{\pi^0_+},{{\bf v}^0_-},{\pi^0_-}\right)&=
\left({U}_{+}({\bf v}_+),{P}_{+}({\bf v}_+),{U}_{-}({\bf v}_+),{P}_{-}({\bf v}_+)\right)\nonumber\\
&:={\mathcal T}\left({\tilde{\bf f}}_{+}|_{\Omega _{+}}+{\mathcal I}_{k;\beta ;\Omega_+}({\bf v}_+)|_{\Omega_+},\ {\tilde{\bf f}}_{-}|_{\Omega _{-}},\ {\bf h}_0+{\bf u}_{\infty },\ {\bf g}_0-\frac{1}{2}{\mathcal P}\, {\bf u}_{\infty }
\right)\in {\mathcal X},
\end{align}
where the linear continuous operator ${\mathcal T}:\mathcal Y\to\mathcal X$ has been introduced in Theorem \ref{well-posedness-Poisson-Besov-Dirichlet0} (see \eqref{T-1}), and ${\mathcal X}$ and ${\mathcal Y}$ are the spaces defined in \eqref{space}, (\ref{space-data}).
Hence, due to Lemma~\ref{embeddings} and Theorem~\ref{well-posedness-Poisson-Besov-Dirichlet}, for fixed ${\tilde{\bf f}}_\pm, {\bf h}_0, {\bf g}_0, {\bf u}_{\infty }$, the nonlinear operators defined by \eqref{solution-v0},
\begin{align}
\label{Newtonian-D-B-F-new3n}
&(U_+,P_+,U_-P_-):H_{\rm{div}}^{1}({\Omega }_{+})^3
\to {\mathcal X}
\end{align} are continuous and bounded, in the sense that there exists a constant $c_*\equiv c_*({\Omega }_{+},{\Omega }_{-},{\mathcal P},\alpha ,\mu )>0$ such that
\begin{align}
\label{estimate-D-B-F-new1-new1-D-Rn}
\big\|\big({U}_{+}({\bf v}_+),&{P}_{+}({\bf v}_+),{U}_{-}({\bf v}_+),{P}_{-}({\bf v}_+)\big)\big\|_{\mathcal X}
%\nonumber\\
%&
\leq c_*\Big\|\left({\tilde{\bf f}}_{+}+\mathcal I_{k;\beta;\Omega_+}({\bf v}_+)
,{\tilde{\bf f}}_{-},{\bf h}_0,{\bf g}_0,{\bf u}_{\infty }\right)\Big\|_{\mathcal Y_\infty}\nonumber\\
&\leq c_*\left(\big\|\big({\tilde{\bf f}}_{+},{\tilde{\bf f}}_{-},{\bf h}_0,{\bf g}_0,{\bf u}_{\infty }\big)\big\|_{\mathcal Y_\infty}+\|\mathcal I_{k;\beta;\Omega_+}({\bf v}_+)\|_{\widetilde{H}^{-1}({\Omega }_{+})^3}\right)\nonumber\\
&\leq c_*\big\|\big({\tilde{\bf f}}_{+},{\tilde{\bf f}}_{-},{\bf h}_0,{\bf g}_0,{\bf u}_{\infty }\big)\big\|_{\mathcal Y_\infty}
+c_{*}c_1{\|{\bf v}_{+}\|_{H^{1}({\Omega }_{+})^3}^2},
\ \ \forall\ {{\bf v}_{+}\in H_{\rm{div}}^{1}({\Omega }_{+})^3},
\end{align}
where $c_1\equiv c_1({\Omega }_{+}{,k,\beta})>0$ is the constant of Lemma~\ref{embeddings}. In addition, in view of (\ref{solution-v0}),  we obtain
\begin{equation}
\label{Newtonian-D-B-F-new4n}
\left\{\begin{array}{lll}
\triangle {U_+({\bf v}_{+})}\!-\!\alpha {U_+({\bf v}_{+})}\!-\!\nabla {P_+({\bf v}_{+})}
%\\
%\quad \quad \quad \quad \quad \quad \quad \ \ \
={\tilde{\bf f}}_{+}|_{\Omega _{+}}\!\!+{\mathcal I}_{k;\beta ;\Omega_+}({\bf v}_+)|_{\Omega_+}
\mbox{ in }\ {\Omega }_{+},\\
\triangle {U_-({\bf v}_{+})}-\nabla {P_-({\bf v}_{+})}
={\tilde{\bf f}}_{-}|_{\Omega _{-}}
%{\rm{div}}\ {U_-({\bf v}_{+})}=0\
\mbox{ in }\ {\Omega }_{-},\\
\gamma_{+}\left({U_+({\bf v}_{+})}\right)-\gamma_{-}\left({U_-({\bf v}_{+})}\right)={\bf h}_0+{\bf u}_{\infty }\in H^{\frac{1}{2}}(\partial\Omega )^3,\\
{\bf t}_{\alpha }^{+}\left({U_+({\bf v}_{+})},{P_+({\bf v}_{+})};{\tilde{\bf f}}_{+}+{{\mathcal I}_{k;\beta ;{\Omega }_{+}}({\bf v}_{+})}\right)
%\\
%\quad
-\mu {\bf t}_{0}^{-}\left({U_-({\bf v}_{+})},
{P_-({\bf v}_{+})};{\tilde{\bf f}}_{-}\right)\\
\qquad\qquad +\frac{1}{2}{\mathcal P}\, \left(\gamma_{+}\left({U_+({\bf v}_{+})}\right)+\gamma_{-}{ U_-({\bf v}_{+})}\right)
={\bf g}_0-\frac{1}{2}{\mathcal P}\, {\bf u}_{\infty }\in H^{-\frac{1}{2}}(\partial\Omega )^3.
\end{array}\right.
\end{equation}
Therefore, if we can show that the nonlinear operator $U_{+}$ has a fixed point ${\bf v}_{+}\in H_{\rm{div}}^{1}({\Omega }_{+})^3$, then it solves the equation ${U_{+}({\bf v}_{+})={\bf v}_{+}}$
and together {with $\mathbf v_-=U_-({\bf v}_{+})$, $\pi_\pm =P_\pm({\bf v}_{+})$} it defines a solution of {nonlinear transmission problem} (\ref{Poisson-Brinkman-smn}) in the space ${\mathcal X}$.
In order to show that ${U}_{+}$ has a fixed point, we prove that $U_{+}$ maps some closed ball $\mathbf{B}_{\eta }$ in $H_{\rm{div}}^{1}({\Omega }_{+})^3$ to the same closed ball $\mathbf{B}_{\eta }$, and that ${U}_{+}$ is a contraction on $\mathbf{B}_{\eta }$. Let us introduce the constants
\begin{align}
\label{Newtonian-D-B-F-new9n}
&\zeta:=\frac{3}{16c_1c_*^2}>0,\ \ \eta :=\frac{1}{4c_1c_*}>0
\end{align}
(see also \cite[Lemma 29]{Choe-Kim}, \cite[Theorem 5.1]{Russo-Tartaglione-2}), and the closed ball
\begin{align}
\label{gamma-0n}
\mathbf{B}_{\eta }:=\left\{{\bf v}_{+}\in H_{\rm{div}}^{1}({\Omega }_{+})^3:\|{\bf v}_{+}\|_{H^{1}({\Omega }_{+})^3}\leq \eta\right\}.
\end{align}
Also we assume that the given data satisfy the condition
\begin{align}
\label{cond-small-sm-msn}
\big\|\big({\tilde{\bf f}}_{+},{\tilde{\bf f}}_{-},{\bf h}_0,{\bf g}_0,{\bf u}_{\infty }\big)\big\|_{\mathcal Y_\infty}\leq \zeta .
\end{align}
Then by (\ref{estimate-D-B-F-new1-new1-D-Rn}), \eqref{Newtonian-D-B-F-new9n}-(\ref{cond-small-sm-msn}) we deduce the inequality
\begin{align}
\label{Newtonian-D-B-F-new9-new-crackn}
\|\left({U}_{+}({\bf v}_+),
{P}_{+}({\bf v}_+),
{U}_{-}({\bf v}_+),
{P}_{-}({\bf v}_+)\right)\|_{\mathcal X}
\leq {\frac{1}{4c_1c_*}}
= \eta ,\ \forall \ {\bf v}_{+}\in \mathbf{B}_{\eta }.
\end{align}
Inequality (\ref{Newtonian-D-B-F-new9-new-crackn}) shows that ${\|{U}_{+}({\bf v}_{+})\|_{H^{1}({\Omega }_{+})^3}\leq \eta,\ \forall \ {\bf v}_{+}\in \mathbf{B}_{\eta }},$ i.e., ${U}_{+}$ maps $\mathbf{B}_{\eta }$ to $\mathbf{B}_{\eta }$, as asserted.

Next we show that the map ${U}_{+}$ is Lipschitz continuous on $\mathbf{B}_\eta$.
Indeed, by using expression (\ref{solution-v0}) for ${U_{+}}$, given in terms of the linear and continuous operator ${\mathcal T}$ and fixed data $\big({\tilde{\bf f}}_{+},{\tilde{\bf f}}_{-},{\bf h}_0,{\bf g}_0,{\bf u}_{\infty }\big)$, we have for two arbitrary functions ${\bf v}_+,{\bf w}_+\in \mathbf{B}_\eta$,
\begin{align}
\label{4.30n}
\|{U}_{+}({\bf v}_{+})-{U}_{+}({\bf w}_{+})
&\|_{{H}^{1}({\Omega }_{+})^3}
\leq c_*\|{{\mathcal I}_{k;\beta ;{\Omega }_{+}}({\bf v}_{+})}-{{\mathcal I}_{k;\beta ;{\Omega }_{+}}({\bf w}_{+})}\|_{\widetilde{H}^{-1}({\Omega }_{+})^3}\nonumber
\\
& \leq c_*c_1\left(\|{\bf v}_+\|_{H^{1}({\Omega }_{+})^3}+\|{\bf w}_+\|_{H^{1}({\Omega }_{+})^3}\right)
\|{\bf v}_+\!-\!{\bf w}_+\|_{H^{1}({\Omega }_{+})^3}\nonumber
\\
& \leq 2\eta c_*c_1\|{\bf v}_+\!-\!{\bf w}_+\|_{H^{1}({\Omega }_{+})^3}
%\nonumber\\
%&
=\frac{1}{2}\|{\bf v}_+\!-\!{\bf w}_+\|_{H^{1}({\Omega }_{+})^3},
\ \forall\ {\bf v}_+,{\bf w}_+\in \mathbf{B}_\eta,
\end{align}
where the first inequality is implied by the continuity of the operator ${\mathcal T}$, while the second one follows from inequality \eqref{estimate-IL} in Lemma~\ref{embeddings}, and $c_*$, $c_1$ are the constants from \eqref{estimate-D-B-F-new1-new1-D-Rn}.
Hence, ${U_{+}}:\mathbf{B}_{\eta }\to \mathbf{B}_{\eta }$ {is a contraction}.

{$\bullet $ {\bf The existence of a solution of the {nonlinear problem} (\ref{Poisson-Brinkman})}}

The Banach-Caccioppoli fixed point theorem implies that there exists a unique fixed point {${\bf v}_{+}\in \mathbf{B}_{\eta }$} of ${U_{+}}$, i.e., ${U_{+}({\bf v}_{+})={\bf v}_{+}}$.
Then ${\bf v}_{+}$ together with the functions ${\mathbf v}_-=U_-({\bf v}_{+})$ and $\pi_\pm =P_\pm({\bf v}_{+})$ given by (\ref{solution-v0}), determine a solution of the {nonlinear problem} (\ref{Poisson-Brinkman-smn}) in the space ${\mathcal X}=\big({H_{\rm{div}}^{1}({\Omega }_{+})^3}\times L^2({\Omega }_{+})\big)\times \big({{\mathcal H}_{\rm{div}}^{1}({\Omega }_{-})^3}\times L^2({\Omega }_{-})\big)$. In addition, since ${\bf v}_{-}\in {\mathcal H}_{\rm{div}}^{1}({\Omega }_{-})^3$, ${\bf v}_{-}$ vanishes at infinity in the sense of Leray (\ref{behavior-infty-sm-ms}). Further, the fields $\left({\bf u}_{+},\pi _+,{\bf u}_{-},\pi _{-}\right)$, where ${{\bf u}_{+}}$ and ${{\bf u}_{-}}$ are given by (\ref{linear-new-smn}), determine a solution of {the nonlinear transmission problem of Poisson type} (\ref{Poisson-Brinkman}) satisfying $\left({\bf u}_{+},\pi _{+},{\bf u}_{-}-{\bf u}_{\infty },\pi _{-}\right)\in {\mathcal X}$.
In addition, by (\ref{Newtonian-D-B-F-new9-new-crackn}) and by the expressions ${\bf u}_{-}-{\bf u}_{\infty }={\bf v}_{-}=U_{-}({\bf v}_{+})$ and $\pi_\pm =P_\pm({\bf v}_{+})$, we deduce that such a solution satisfies inequality (\ref{inequality-N-S-D-B-Fn}).
Also, ${\bf u}_{-}-{\bf u}_{\infty }$ vanishes at infinity in the sense of Leray, i.e., ${\bf u}_{-}-{\bf u}_{\infty }$ satisfies {condition} (\ref{behavior-infty-sm}).

Since the solution ${\bf v}_{+}$ belongs to the closed ball $\mathbf{B}_{\eta }$, we obtain that $c_{*}c_1\|{\bf v}_{+}\|_{H^{1}({\Omega }_{+})^3}\le c_{*}c_1\eta=\dfrac{1}{4}.$
Then inequality \eqref{estimate-D-B-F-new1-new1-D-Rn} gives
\begin{align}
\label{estimate-D-B-F-new1-new1-D-Rn1}
\|{\bf v}_+\|_{H^1(\Omega _{+})^3}&+\|\pi_{+}\|_{L^2(\Omega _{+})}+\|{\bf v}_-\|_{{\mathcal H}^1(\Omega _{-})^3}+\|\pi_-\|_{L^2(\Omega _{-})}\nonumber\\
&=\|({\bf v}_{+},\pi _{+},{\bf v}_{-},\pi _{-})\|_{\mathcal X}\leq c_*\big\|\big({\tilde{\bf f}}_{+},{\tilde{\bf f}}_{-},{\bf h}_0,{\bf g}_{0},{\bf u}_{\infty }\big)\big\|_{\mathcal Y_\infty}
+\frac{1}{4}\|{\bf v}_{+}\|_{H^{1}({\Omega }_{+})^3},
\end{align}
which implies
%\begin{multline*}
$
\|{\bf v}_+\|_{H^1(\Omega _{+})^3}\leq
\dfrac{4}{3}c_*\big\|\big({\tilde{\bf f}}_{+},{\tilde{\bf f}}_{-},{\bf h}_0,{\bf g}_{0},{\bf u}_{\infty }\big)\big\|_{\mathcal Y_\infty}.
$
%\end{multline*}
Substituting this back to \eqref{estimate-D-B-F-new1-new1-D-Rn1} and using
\eqref{linear-new-smn}, we obtain the desired estimate \eqref{estimate-D-B-F-new1-new1-D-Rn2} with $C=\dfrac{4}{3}c_*$.

{$\bullet $ {\bf The uniqueness of solution of the {nonlinear problem} (\ref{Poisson-Brinkman})}}

We now show the uniqueness of the solution $\left({\bf u}_{+},\pi _{+},{\bf u}_{-},\pi _{-}\right)$ of the {nonlinear transmission problem} (\ref{Poisson-Brinkman}), satisfying $\left({\bf u}_{+},\pi _{+},{\bf u}_{-}-{\bf u}_{\infty },\pi _{-}\right)\in {\mathcal X}$ and {inequality} (\ref{inequality-N-S-D-B-Fn}).
Assume that $\left({\bf u}_{+}',\pi_{+}',{\bf u}_{-}',\pi_{-}'\right)$ is another solution of problem (\ref{Poisson-Brinkman}), such that $\left({\bf u}_{+}',{\pi}_{+}',{\bf u}_{-}'-{\bf u}_{\infty },{\pi}_{-}'\right)\in {\mathcal X}$, and that such a solution satisfies {inequality} (\ref{inequality-N-S-D-B-Fn}).
Hence for $({\bf v}'_{+},{\bf v}'_{-})=({\bf u}_{+}',{\bf u}_{-}'-{\bf u}_{\infty })$, we obtain ${{\bf v}'_{+}}\in \mathbf{B}_{\eta }$.
Since ${{\bf v}'_{+}}\in \mathbf{B}_{\eta }$, we obtain that ${U_{+}({\bf v}'_{+})}\in \mathbf{B}_{\eta },$ where $\left({U_+}({\bf v}'_{+}),P_+({\bf v}'_{+}),{U_-}({\bf v}'_{+}),P_-({\bf v}'_{+})\right)$ are given by (\ref{solution-v0}) and satisfy {problem}
(\ref{Newtonian-D-B-F-new4n}) {with ${\bf v}_{+}$ replaced by ${\bf v}'_{+}$}.
Then by (\ref{Poisson-Brinkman-smn}) (written in terms of $\left({\bf v}'_{+},{\pi}_{+}',{\bf v}'_{-},{\pi}_{-}'\right)$) and (\ref{Newtonian-D-B-F-new4n}) we obtain the linear transmission problem
\begin{equation}
\label{Newtonian-D-B-F-new4-unique1n}
\left\{\begin{array}{lll}
(\triangle -\alpha {\mathbb I})\left({U_+({\bf v}'_{+})}-{\bf v}'_{+}\right)
-\nabla ({P_+({\bf v}'_{+})}-{\pi}_{+}')
={\bf 0}\ \mbox{ in }\ {\Omega }_{+},\\
%{\rm{div}}\left({U_+({\bf v}'_{+})}-{\bf v}'_{+}\right)=0\
%\mbox{ in }\ {\Omega }_{+},\\
\triangle \left({U_{-}({\bf v}'_{+})}-{\bf v}'_{-}\right)
-\nabla ({P_-({\bf v}'_{+})}-{\pi}_{-}')={\bf 0}\
%{\rm{div}}\left({U_{-}({\bf v}'_{+})}-{\bf v}'_{-}\right)=0\
\mbox{ in }\ {\Omega }_{-},\\
{\rm{Tr}}^{+}\left({U_+({\bf v}'_{+})}
-{\bf v}'_{+}\right)-{\rm{Tr}}^{-}\left({U_-({\bf v}'_{+})}
-{\bf v}'_{-}\right)={\bf 0}\ \mbox{ on }\ \partial\Omega  ,\\
{\bf t}_{\alpha }^{+}\left({U_+({\bf v}'_{+})}-{\bf v}'_{+},{P_+({\bf v}'_{+})}
-{\pi}_{+}'\right)-\mu {\bf t}_{0}^{-}\left({U_-({\bf v}'_{+})}-{\bf v}'_{-},{P_-({\bf v}'_{+})}-{\pi}_{-}'\!\right)\\
\hspace{6em}  +\frac{1}{2}{\mathcal P}\, \left({\rm{Tr}}^{+}
\left({U_+({\bf v}'_{+})}-{\bf v}'_{+}\right)+{\rm{Tr}}^{-}
\left({U_-({\bf v}'_{+})}-{\bf v}'_{-}\right)\right)
={\bf 0}\ \mbox{ on }\ \partial \Omega , %,\\
%\left({U_-({\bf v}_{+})}-{\bf v}_{-}\right)({\bf x})\to 0,\
%(q_{-}-{\cn\pi}_{-})({\bf x})\to 0 \ \mbox{ as }\ |{\bf x}|\to \infty.
\end{array}\right.\nonumber
\end{equation}
which, in view of the well-posedness result in Theorem \ref{well-posedness-Poisson-Besov-Dirichlet}, has only the trivial solution in the space ${\mathcal X}$, i.e.,
$\left({U_+}({\bf v}'_{+}),{U_-}({\bf v}'_{+})\right)=({\bf v}'_{+},{\bf v}'_{-})$ and ${P_\pm({\bf v}'_{+})}={\pi}_{\pm }'$. Therefore, ${{\bf v}'_{+}}$ is a fixed point of ${U_{+}}$. However, ${ U_{+}}:\mathbf{B}_{\eta }\to \mathbf{B}_{\eta }$ given by (\ref{solution-v0}) is a contraction, and, thus, it has a unique fixed point {${\bf v}_{+}$} in $\mathbf{B}_{\eta }$. Consequently, ${{\bf v}'_{+}={\bf v}_{+}}$. In addition, ${{\bf v}'_{-}={\bf v}_{-}}$ and also $\pi'_\pm=\pi_\pm$.

$\bullet $ {\bf The continuity of the solution with respect to the given data}

Since the unique solution $\left({\bf u}_{+},\pi _{+},{\bf u}_{-},\pi _{-}\right)$ of {nonlinear problem} (\ref{Poisson-Brinkman}) is expressed in terms of the unique fixed point of the contraction $U_{+}:\mathbf{B}_{\eta }\to \mathbf{B}_{\eta }$, which is a continuous mapping with respect to the data $\big({\tilde{\bf f}}_{+},{\tilde{\bf f}}_{-},{\bf h}_0,{\bf g}_{0}\big)\in {\mathcal Y}$ due to the continuity of the operator ${\mathcal T}$, we deduce that
$\left({\bf u}_{+},\pi _{+},{\bf u}_{-},\pi _{-}\right)$ depends continuously on the given data (cf., e.g., \cite[Chapter XVI, \S 1, Theorem 3]{Ka-Ak}).
\end{proof}

{\begin{rem}
\label{transmission-infty-nonlinn}
Note that in Theorem~$\ref{existence-N-S-D-B-F}$ we do not need to assume the conditions $k>0$, $\beta>0$ on the coefficients of the nonlinear terms, as is normally done in a variational approach to such problems. %In addition, if ${\tilde{\bf f}}_{-}={\bf 0}$ then the unique solution of the {nonlinear problem} \eqref{Poisson-Brinkman} {satisfies the strong estimate (\ref{behavior-infinity-linear-ms1}) at infinity}.
\end{rem}}

\appendix
{
{ %\section*{Appendix}
%\addtocounter{section}{1}
%\setcounter{section}{0}
%\setcounter{equation}{0}
%\setcounter{thm}{0}
}
\section{Behaviour at infinity and properties of potentials}
}
%{\mg I the Appendix} we present some results that have been useful in the proof of main results of this paper.

{The following assertion} {characterizes} the behavior at infinity of functions in the space ${\mathcal H}^{1}(\Omega _{-})$ (cf. \cite[Lemma 2.1 {and} Definition 2.2]{Am-Ra}).
\begin{lem}
\label{behavior-infinity-s}
Let $\Omega \subset {\mathbb R}^3$ be a bounded Lipschitz domain with connected boundary. Let $\Omega _{-}:={\mathbb R}^3\setminus \overline{\Omega }$ be the corresponding exterior Lipschitz domain. Assume that $u\in {\mathcal D}'(\Omega _{-})$ and that $\nabla u\in L^2(\Omega _{-})^3$. Then there exists a unique constant $u_{\infty }\in {\mathbb R}$ such that $u-u_{\infty }\in {\mathcal H}^{1}(\Omega _{-})$. Moreover,
%$u_{\infty }$ satisfies the condition
\begin{align*}
{u_{\infty }=\frac{1}{4\pi }\lim _{r\to \infty }\int _{S^2}u(r{\bf y})d\sigma _{\bf y},}
\end{align*}
where $S^2$ is the unit sphere in ${\mathbb R}^3$. In addition, $u-u_{\infty }\in L^6(\Omega _{-})$, and
\begin{align}
\label{sm-sm}
\|u-u_{\infty }\|_{L^6(\Omega _{-})}\leq C\|\nabla u\|_{L^2(\Omega _{-})^3},\ \ {\lim_{r\to \infty }\int _{S^2}|u(r{\bf y})-u_{\infty }|d\sigma _{\bf y}=0.}
\end{align}
\end{lem}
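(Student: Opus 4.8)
The plan is to reduce the assertion to the homogeneous Sobolev space of $\mathbb R^3$ by means of a cut-off, relying on two classical facts in three dimensions: the Gagliardo--Nirenberg--Sobolev inequality $\|w\|_{L^6(\mathbb R^3)}\le C\|\nabla w\|_{L^2(\mathbb R^3)^3}$ together with the associated Deny--Lions representation (every $w\in{\mathcal D}'(\mathbb R^3)$ with $\nabla w\in L^2(\mathbb R^3)^3$ coincides with an $L^6(\mathbb R^3)$ function up to a \emph{unique} additive constant), and the Hardy inequality $\big\||{\bf x}|^{-1}w\big\|_{L^2(\mathbb R^3)}\le C\|\nabla w\|_{L^2(\mathbb R^3)^3}$, valid for such $w$ since $3>2$. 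First I would record that, since $\nabla u\in L^2(\Omega_-)^3\subset L^2_{\rm loc}(\Omega_-)^3$, the distribution $u$ is in fact in $H^1_{\rm loc}(\Omega_-)$; in particular $\rho^{-1}u$ and $\nabla u$ are square integrable on every bounded part of $\Omega_-$, so only the behaviour near infinity is at stake.

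Fix a ball $B_{R_0}\supset\overline\Omega$ with $R_0\ge1$ and a cut-off $\chi\in C^\infty(\mathbb R^3)$ with $\chi\equiv0$ on $B_{R_0+1}$ and $\chi\equiv1$ outside $B_{R_0+2}$. Then $\chi u$, extended by $0$ inside $B_{R_0+1}$, lies in $H^1_{\rm loc}(\mathbb R^3)$ and $\nabla(\chi u)=\chi\nabla u+u\nabla\chi\in L^2(\mathbb R^3)^3$, because $u\nabla\chi$ is supported in a compact annulus on which $u\in L^2$. The Deny--Lions/Sobolev statement then yields a unique constant $c$ with $\chi u-c\in L^6(\mathbb R^3)$, and Hardy's inequality gives $|{\bf x}|^{-1}(\chi u-c)\in L^2(\mathbb R^3)$. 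Restricting to $\{|{\bf x}|>R_0+2\}$, where $\chi u=u$, and combining with the local $H^1$ information on the bounded piece $\Omega_-\cap B_{R_0+2}$ (there $u-c\in H^1\hookrightarrow L^6$), one obtains $u-c\in{\mathcal H}^1(\Omega_-)$ upon noting $\rho^{-1}\sim\min(1,|{\bf x}|^{-1})$. Set $u_\infty:=c$. Uniqueness of $u_\infty$ follows from the embedding ${\mathcal H}^1(\Omega_-)\hookrightarrow L^6(\Omega_-)$ of (\ref{weight-4}): two admissible constants would differ by a constant lying in $L^6$ of the infinite-measure set $\Omega_-$, hence would coincide. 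Once $u-u_\infty\in{\mathcal H}^1(\Omega_-)$ is known, the membership $u-u_\infty\in L^6(\Omega_-)$ and the bound $\|u-u_\infty\|_{L^6(\Omega_-)}\le C\|\nabla u\|_{L^2(\Omega_-)^3}$ are immediate from (\ref{weight-4}) and the equivalence of $\|\cdot\|_{{\mathcal H}^1(\Omega_-)}$ with the seminorm $\|\nabla\cdot\|_{L^2}$ recorded in Section~\ref{S2.2}.

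It remains to establish the Leray limit and the spherical-mean formula. Put $v:=u-u_\infty\in{\mathcal H}^1(\Omega_-)$, so $\rho^{-1}v\in L^2(\Omega_-)$, $\nabla v\in L^2(\Omega_-)^3$ and $v\in H^1_{\rm loc}(\Omega_-)$. For $r>R_0$ set $h(r):=\int_{S^2}|v(r{\bf y})|^2\,d\sigma_{\bf y}$ and $k(r):=\int_{S^2}|\nabla v(r{\bf y})|^2\,d\sigma_{\bf y}$; by polar coordinates $\int_{R_0}^\infty h(r)\,dr=\int_{|{\bf x}|>R_0}|v|^2|{\bf x}|^{-2}\,d{\bf x}<\infty$ and $\int_{R_0}^\infty k(r)\,dr\le\int_{R_0}^\infty r^2k(r)\,dr=\int_{|{\bf x}|>R_0}|\nabla v|^2\,d{\bf x}<\infty$. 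Since $v\in H^1_{\rm loc}$, $h$ is absolutely continuous on $(R_0,\infty)$ with $|h'(r)|\le2\,h(r)^{1/2}k(r)^{1/2}$, hence $\int_{R_0}^\infty|h'|\le2\big(\int h\big)^{1/2}\big(\int k\big)^{1/2}<\infty$; thus $h$ is of bounded variation, $\lim_{r\to\infty}h(r)$ exists, and being integrable in $r$ it must vanish. By Cauchy--Schwarz $\int_{S^2}|v(r{\bf y})|\,d\sigma_{\bf y}\le(4\pi)^{1/2}h(r)^{1/2}\to0$, which is the second relation in (\ref{sm-sm}); consequently $\frac1{4\pi}\int_{S^2}u(r{\bf y})\,d\sigma_{\bf y}=u_\infty+\frac1{4\pi}\int_{S^2}v(r{\bf y})\,d\sigma_{\bf y}\to u_\infty$, which is the stated formula.

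I expect the main obstacle to be the first step --- pinning down $u_\infty$ and the global membership $u-u_\infty\in{\mathcal H}^1(\Omega_-)$ --- where the homogeneous-Sobolev-space machinery on $\mathbb R^3$ (Deny--Lions and Hardy) must be combined carefully with the local $H^1$ regularity of $u$ across the Lipschitz interface $\partial\Omega$. A secondary subtlety is that the Leray limit genuinely needs the bounded-variation-along-rays argument above, since the integrability $h\in L^1((R_0,\infty))$ by itself would only give a limit along a subsequence. This reproduces \cite[Lemma~2.1, Definition~2.2]{Am-Ra}.
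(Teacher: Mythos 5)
The paper does not actually prove this lemma: it is stated with the citation ``(cf.\ [Am--Ra, Lemma 2.1 and Definition 2.2])'' and no argument is given, so there is no in-paper proof to compare against. Your reconstruction is, as far as I can check, correct and follows the standard route for such results: cut off near infinity, invoke the Deny--Lions representation of the homogeneous space $\dot H^1(\mathbb R^3)$ together with the Sobolev and Hardy inequalities to produce the constant $c$ and the decay $|{\bf x}|^{-1}(u-c)\in L^2$, identify $u_\infty=c$ by the infinite measure of $\Omega_-$, and then get the Leray limit from the bounded-variation-along-rays argument (your observation that $h\in L^1(R_0,\infty)$ alone would only give a subsequential limit, and that one genuinely needs $h'\in L^1$ via Cauchy--Schwarz, is exactly the right point).

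The one step you should tighten is the passage ``$u\in H^1_{\rm loc}(\Omega_-)$, in particular $\rho^{-1}u$ and $\nabla u$ are square integrable on every bounded part of $\Omega_-$,'' which you later use in the form $u-c\in H^1(\Omega_-\cap B_{R_0+2})$. The set $\Omega_-\cap B_{R_0+2}$ is \emph{not} compactly contained in $\Omega_-$ (its closure contains $\partial\Omega$), so membership in $H^1_{\rm loc}(\Omega_-)$ does not by itself give square integrability of $u$ up to the interface. What you need here is the classical Deny--Lions/Ne\v{c}as fact that on a \emph{bounded Lipschitz} domain a distribution with gradient in $L^2$ is itself in $L^2$ (applied to the Lipschitz shell between $\partial\Omega$ and $\partial B_{R_0+2}$); this is precisely where the Lipschitz hypothesis on $\partial\Omega$ enters, and it should be invoked explicitly rather than folded into ``local $H^1$ information.'' With that reference added, the argument is complete and faithfully reproduces the cited result of Amrouche and Razafison.
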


Let us show the behavior at infinity of a solution of the homogeneous Stokes system in the weighted Sobolev space ${\mathcal H}^{1}(\Omega _{-})^3\times L^{2}(\Omega _{-})$.
\begin{lem}
\label{infty}
Let $\Omega :=\Omega _{+}\subset {\mathbb R}^3$ be a bounded Lipschitz domain with connected boundary and
$\Omega _{-}:={\mathbb R}^3\setminus \overline{\Omega }$. {If the pair $({\bf v},p)\in {\mathcal H}^{1}(\Omega _{-})^3\times L^{2}(\Omega _{-})$ satisfies the equations
\begin{equation}
\triangle {\bf v}-\nabla p ={\bf 0},\ \
{\rm{div}}\ {\bf v}=0\ \mbox{ in }\ \Omega _{-},
\end{equation}
then }
\begin{align}
\label{behavior-Stokes}
{{\bf v}({\bf x})=O(|{\bf x}|^{-1}),\ \nabla {\bf v}({\bf x})=O(|{\bf x}|^{-2}),\ p({\bf x})=O(|{\bf x}|^{-2})\ \mbox{ as }\ |{\bf x}|\to \infty .}
\end{align}
\end{lem}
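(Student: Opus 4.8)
The plan is to use the fact that $({\bf v},p)$ solves the homogeneous Stokes system outside a compact set, together with the known decay properties of the Stokes fundamental solution, to obtain the asymptotics via an integral representation. First I would fix a large ball $B_R$ with $\overline{\Omega }\subset B_R$ and set $\Omega_R := \Omega_- \cap B_R^c$ (the region outside $B_R$), so that $({\bf v},p)$ satisfies $\triangle {\bf v}-\nabla p={\bf 0}$, ${\rm div}\,{\bf v}=0$ in $\Omega_R$. Since $({\bf v},p)\in {\mathcal H}^{1}(\Omega _{-})^3\times L^{2}(\Omega _{-})$, the restrictions to any bounded Lipschitz subdomain lie in $H^1\times L^2$, so by interior regularity for the Stokes system $({\bf v},p)$ is in fact smooth (real-analytic) in $\Omega_-$ away from $\partial\Omega$; in particular it is smooth on the annular shell $\{R<|{\bf x}|<2R\}$, and there the conormal derivative ${\bf t}_0^{-}({\bf v},p)$ on $\partial B_\rho$, $\rho\in(R,2R)$, is a genuine $L^2$ (indeed smooth) vector field.

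The key step is then to write, for $|{\bf x}|$ large, a Green representation of $({\bf v},p)$ in the exterior of a sphere $\partial B_\rho$ using the Stokes layer potentials of Section 3: for ${\bf x}$ outside $\overline{B_\rho}$,
\begin{equation}
{\bf v}({\bf x})= \big({\bf W}_{\partial B_\rho}(\gamma^- {\bf v})\big)({\bf x}) - \big({\bf V}_{\partial B_\rho}({\bf t}_0^{-}({\bf v},p))\big)({\bf x}) + {\bf c}({\bf x}),
\end{equation}
where ${\bf c}$ accounts for the possible nonzero "flux/constant at infinity" contribution. Here I would invoke Lemma \ref{behavior-infinity-s} applied componentwise to ${\bf v}$: since $\nabla {\bf v}\in L^2(\Omega_-)^{3\times 3}$, each component of ${\bf v}$ tends to a constant at infinity, and by the Corollary following Definition \ref{behavior-infinity-sm} that constant is $0$ because ${\bf v}\in {\mathcal H}^1(\Omega_-)^3 \hookrightarrow L^6(\Omega_-)^3$. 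Hence the "constant at infinity" term drops out, and the representation reduces to a sum of single- and double-layer potentials over a fixed compact surface $\partial B_\rho$ with smooth densities. Then the explicit formulas \eqref{fundamental-solution-Stokes}, \eqref{fundamental-solution-Brinkman2} for ${\mathcal G}$, $\Pi$, ${\bf S}$, $\Lambda$ — which decay like $|{\bf x}|^{-1}$, $|{\bf x}|^{-2}$, $|{\bf x}|^{-2}$, $|{\bf x}|^{-3}$ respectively, with one extra power of decay upon differentiation — give directly, for ${\bf x}$ far from $\partial B_\rho$,
\begin{align}
{\bf v}({\bf x}) &= O(|{\bf x}|^{-1}),\quad \nabla {\bf v}({\bf x}) = O(|{\bf x}|^{-2}),\quad p({\bf x}) = O(|{\bf x}|^{-2}) \quad \text{as } |{\bf x}|\to\infty,\nonumber
\end{align}
since the pressure of ${\bf V}_{\partial B_\rho}$ and ${\bf W}_{\partial B_\rho}$ is built from $\Pi$ and $\Lambda$, which decay like $|{\bf x}|^{-2}$.

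The main obstacle I anticipate is justifying the exterior Green representation formula rigorously in this low-regularity weighted setting: one must check that the solid integral terms $\langle {\boldsymbol{\mathcal L}}_0({\bf v},p),\cdot\rangle$ vanish in the exterior region (true since the equations are homogeneous there), and, more delicately, that there is no extra harmonic-polynomial or "Stokeslet at infinity" term — i.e. that the decay of ${\bf v}$ and $p$ at infinity is strong enough to kill the boundary contribution from a sphere of radius $\to\infty$ in Green's identity. This is exactly where Lemma \ref{behavior-infinity-s} and the embedding ${\mathcal H}^1(\Omega_-)\hookrightarrow L^6(\Omega_-)$ do the work: they force ${\bf v}\in L^6$, which combined with $\nabla{\bf v}\in L^2$ and the Stokes equations lets one show that the flux of $(-p\,{\mathbb I}+2{\mathbb E}({\bf v}))$ through large spheres tends to zero, so the representation contains only the contribution from the fixed sphere $\partial B_\rho$. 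Once that is in place, the stated asymptotics \eqref{behavior-Stokes} follow from pointwise estimates on the fundamental solution, completing the proof. Alternatively, and perhaps more cleanly, one can cite the exterior Stokes asymptotics already established in the weighted-Sobolev framework of \cite{Al-Am} or \cite{Gi-Se} (which were used for uniqueness in Lemma \ref{well-posedness-Poisson-Besov-Dirichlet-L}), since those references treat precisely solutions in ${\mathcal H}^1(\Omega_-)^3\times L^2(\Omega_-)$ of the exterior Stokes problem and record their decay at infinity.
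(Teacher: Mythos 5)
Your argument is correct in outline but follows a genuinely different route from the paper. The paper does not use a representation formula at all: it invokes the solvability of the exterior Dirichlet problem \eqref{Dirichlet-Stokes-ext} with datum $\gamma_-{\bf v}$ and the decay conditions \eqref{behavior-Stokes} built in as side conditions (citing \cite[Theorem 9.2.4]{M-W}), checks by direct integration that this decaying solution $({\bf u},\pi)$ in fact lies in ${\mathcal H}^{1}(\Omega_-)^3\times L^2(\Omega_-)$, and then concludes $({\bf v},p)=({\bf u},\pi)$ from the uniqueness of the exterior Dirichlet problem in the weighted space (\cite[Theorem 3.4]{Gi-Se}); the asymptotics are thus \emph{inherited} rather than derived. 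Your route --- a Green representation over a fixed sphere $\partial B_\rho$ combined with the decay of the kernels ${\mathcal G},\Pi,{\bf S},\Lambda$ --- is the classical alternative and can be made rigorous; you have correctly located the only delicate point, namely killing the boundary terms on $\partial B_r$ as $r\to\infty$. To complete it you would note that for the double-layer term the needed decay is exactly that of the spherical means of ${\bf v}$, supplied by the Corollary following Definition \ref{behavior-infinity-sm}, while for the single-layer term with density the traction the memberships $|\nabla{\bf v}|,\,|p|\in L^2(\Omega_-)$ only give vanishing of $r^{-1}\int_{\partial B_r}(|\nabla{\bf v}|+|p|)\,d\sigma$ along a suitable sequence $r_n\to\infty$, which suffices because the left-hand side of the identity does not depend on $r$; you should also dispose of the possible additive constant in the pressure representation, which is excluded since a nonzero constant is not in $L^2(\Omega_-)$. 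What the paper's approach buys is that all of this bookkeeping is outsourced to the cited well-posedness theorems; what yours buys is an explicit leading-order (Stokeslet) expansion rather than just the $O$-bounds. Your closing suggestion to simply appeal to \cite{Gi-Se} or \cite{Al-Am} is in fact essentially what the paper does.
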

\begin{proof}
First, note that the exterior Dirichlet problem for the Stokes system
\begin{equation}
\label{Dirichlet-Stokes-ext}
\left\{\begin{array}{lll}
\triangle {\bf u}-\nabla \pi ={\bf 0},\ {\rm{div}}\ {\bf u}=0\ \mbox{ in }\ \Omega _{-},\\
\gamma_{-}{\bf u}=\gamma_{-}{\bf v}\in H^{\frac{1}{2}}(\partial \Omega )^3,\\
{\bf u}({\bf x})=O(|{\bf x}|^{-1}),\ \nabla {\bf u}({\bf x})=O(|{\bf x}|^{-2}),\ \pi ({\bf x})=O(|{\bf x}|^{-2})\ \mbox{ as }\ |{\bf x}|\to \infty ,
\end{array}\right.
\end{equation}
has a unique solution $({\bf u},\pi )\in H_{{\rm{loc}}}^{1}(\overline{\Omega _{-}})^3\times L_{{\rm{loc}}}^{2}(\overline{\Omega _{-}})$ (see, e.g., \cite[Theorem 9.2.4]{M-W}). We now show that $({\bf u},\pi )\in {\mathcal H}^{1}({\Omega _{-}})^3\times L^{2}(\Omega _{-})$. Indeed, by the first asymptotic condition in (\ref{Dirichlet-Stokes-ext}), there exist two constants $M,R_0>0$ such that $|{\bf x}|^2|{\bf u}({\bf x})|^2\leq M$ if $|{\bf x}|\geq R_0$. Moreover, we can assume that $\overline{\Omega }\subset B_{R_0}$, where $B_{R_0}$ is the ball in ${\mathbb R}^3$ of radius $R_0$ and center $0$ (assumed to be a point in $\Omega $). Then by exploiting the inequalities $\rho \geq 1$, $\rho \geq |{\bf x}|$ and $\displaystyle\int _{{\mathbb R}^3\setminus \overline{B_{R_0}}}\frac{1}{|{\bf x}|^4}d{\bf x}<\infty $, we obtain that
\begin{align}
\int _{\Omega _{-}}\frac{1}{\rho ^2}|{\bf u}({\bf x})|^2d{\bf x}&=\int _{\Omega _{-}\bigcap B_{R_0}}\frac{1}{\rho ^2}|{\bf u}({\bf x})|^2d{\bf x}+\int _{{\mathbb R}^3\setminus \overline{B_{R_0}}}\frac{1}{\rho ^2}|{\bf u}({\bf x})|^2d{\bf x},
\\
&\leq \int _{\Omega _{-}\bigcap B_{R_0}}|{\bf u}({\bf x})|^2d{\bf x}+
M\int _{{\mathbb R}^3\setminus \overline{B_{R_0}}}\frac{1}{|{\bf x}|^4}d{\bf x}<\infty .\nonumber
\end{align}
Hence, ${\bf u}\in L^2(\rho^{-1};{\Omega }_{-})^3$. By exploiting the last two asymptotic assumptions in (\ref{Dirichlet-Stokes-ext}) and a similar argument as above, we deduce that $\nabla {\bf u}\in L^2(\Omega _{-})^3$ and $\pi \in L^2(\Omega _{-})$. Therefore, $({\bf u},\pi )\in {\mathcal H}^{1}(\Omega _{-})^3\times L^2(\Omega _{-})$. In addition, $({\bf u},\pi )$ is a solution of the exterior Dirichlet problem
\begin{equation}
\label{Dirichlet-Stokes-ext-1}
\left\{\begin{array}{lll}
\triangle {\bf w}-\nabla q={\bf 0},\ {\rm{div}}\ {\bf w}=0\ \mbox{ in }\ \Omega _{-},\\
\gamma_{-}({\bf w})=\gamma_{-}({\bf v})\in H^{\frac{1}{2}}(\partial \Omega )^3
\end{array}\right.
\end{equation}
in the space ${\mathcal H}^{1}(\Omega _{-})^3\times L^2(\Omega _{-})$. The pair $({\bf v},p)\in {\mathcal H} ^{1}(\Omega _{-})^3\times L^2(\Omega _{-})$ satisfies the same problem (\ref{Dirichlet-Stokes-ext-1}). Then the uniqueness of the solution of the exterior Dirichlet problem for the Stokes system (\ref{Dirichlet-Stokes-ext-1}) in the space ${\mathcal H}^{1}(\Omega _{-})^3\times L^2(\Omega _{-})$ (cf. \cite[Theorem 3.4]{Gi-Se}) implies that ${\bf v}={\bf u}$ and $p=\pi $ in $\Omega _{-}$. Finally, the asymptotic relations in (\ref{Dirichlet-Stokes-ext}) satisfied by ${\bf u}$ and $\pi $ yield {formulas} (\ref{behavior-Stokes}), as asserted.
\end{proof}

%\subsection*{\bf Mapping properties of the Newtonian potential for the Stokes {and Brinkman systems in Sobolev spaces}}

{For $\alpha=0$, and an exterior domain $\Omega_-$ {(or the whole $\mathbb R^3$)}, the weighted Sobolev spaces are more suitable than the standard Sobolev spaces{, while in $\Omega_+$ the standard Sobolev spaces {coincide with the weighted ones}. In these cases} we have the following mapping properties similar to the ones for the potentials of the Laplace operator available, e.g., in \cite[Theorem 4.1]{Ch-Mi-Na-3} and \cite[Theorem 3.2]{Lang-Mendez}.}
\begin{lem}
\label{Newtonian-weight}
The following Newtonian {velocity and pressure potential operators} for the Stokes system given by \eqref{NoT} and \eqref{QoT} are linear and continuous,
\begin{align}
\label{Newtonian-Stokes-R}
{\boldsymbol{\mathcal N}}_{\mathbb R^3}:{\mathcal H}^{-1}(\mathbb R^3)^3\to {\mathcal H}^{1}(\mathbb R^3)^3,&
\quad {\mathcal Q}_{\mathbb R^3}:{\mathcal H}^{-1}(\mathbb R^3)^3\to L^2(\mathbb R^3),\\
\label{Newtonian-Stokes-S}
{\boldsymbol{\mathcal N}}_{{\Omega }_\pm}:\widetilde{\mathcal H}^{-1}({\Omega }_\pm)^3\to {\mathcal H}^{1}({\Omega }_\pm)^3,&
\quad {\mathcal Q}_{{\Omega }_\pm}:\widetilde{\mathcal H}^{-1}({\Omega }_\pm)^3\to L^2({\Omega }_\pm),\\
\label{Newtonian-Stokes+}
{\boldsymbol{\mathcal N}}_{\Omega_+}:\widetilde{H}^{-1}(\Omega_+)^3\to {H}^{1}(\Omega_+)^3,&\quad
{\mathcal Q}_{\Omega_+}:\widetilde{H}^{-1}(\Omega_+)^3\to L^{2}(\Omega_+).
\end{align}
\end{lem}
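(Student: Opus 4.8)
The plan is to prove the three displays in the order \eqref{Newtonian-Stokes-R}, \eqref{Newtonian-Stokes-S}, \eqref{Newtonian-Stokes+}: first establish the whole-space mapping properties by Fourier analysis (or, equivalently, by reduction to the already known weighted estimates for the Newtonian potential of the Laplacian), then obtain the exterior/interior weighted statements by a restriction-and-duality argument, and finally deduce the standard-Sobolev statement on the bounded domain $\Omega_+$ from the fact that there the weight $\rho$ is bounded above and below. Throughout, the operators being initially defined on $\mathcal D(\mathbb R^3)^3$ resp.\ $\mathcal D(\Omega_\pm)^3$, it suffices to prove the a priori estimates on these dense subspaces and extend by continuity, using density of $\mathcal D(\mathbb R^3)^3$ in $\mathcal H^{-1}(\mathbb R^3)^3$ and of $\mathcal D(\Omega_\pm)^3$ in $\widetilde{\mathcal H}^{-1}(\Omega_\pm)^3$.

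For the whole space, I would use that the equivalence of the norm \eqref{weight-2} with the seminorm $\|\nabla\cdot\|_{L^2}$ (valid on $\mathbb R^3$ by Hardy's inequality, and with $\mathcal H^1(\mathbb R^3)\hookrightarrow L^6(\mathbb R^3)$ so that no polynomial ambiguity occurs) identifies $\mathcal H^1(\mathbb R^3)$ with the homogeneous Sobolev space $\dot H^1(\mathbb R^3)$, and hence, by \eqref{duality-spaces}-type duality, $\mathcal H^{-1}(\mathbb R^3)$ with $\dot H^{-1}(\mathbb R^3)$. Since $\big({\boldsymbol{\mathcal N}}_{\mathbb R^3}{\bf f}\big)^{\wedge}=-\widehat{\mathcal G}^{0}\,\widehat{\bf f}$ and $\big({\mathcal Q}_{\mathbb R^3}{\bf f}\big)^{\wedge}=-\widehat{\Pi}\,\widehat{\bf f}$, with $\widehat{\mathcal G}^{0}({\boldsymbol\xi})$ a matrix symbol that is smooth on $\mathbb R^3\setminus\{0\}$ and homogeneous of degree $-2$ and $\widehat{\Pi}({\boldsymbol\xi})$ homogeneous of degree $-1$, Plancherel's theorem gives at once $\|{\boldsymbol{\mathcal N}}_{\mathbb R^3}{\bf f}\|_{\dot H^{1}}\lesssim\|{\bf f}\|_{\dot H^{-1}}$ and $\|{\mathcal Q}_{\mathbb R^3}{\bf f}\|_{L^{2}}\lesssim\|{\bf f}\|_{\dot H^{-1}}$, which is \eqref{Newtonian-Stokes-R}. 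Alternatively — matching the Laplace-operator analogy referred to in the statement — one may write, first for ${\bf f}\in\mathcal D(\mathbb R^3)^3$ and then by density, the identities ${\mathcal Q}_{\mathbb R^3}{\bf f}=-\,{\rm div}\,(\mathcal E_{\mathbb R^3}{\bf f})$ and ${\boldsymbol{\mathcal N}}_{\mathbb R^3}{\bf f}=\mathcal E_{\mathbb R^3}{\bf f}+\nabla\big(\mathcal E_{\mathbb R^3}{\mathcal Q}_{\mathbb R^3}{\bf f}\big)$, where $\mathcal E_{\mathbb R^3}$ is the Newtonian potential of the Laplacian (these indeed produce a divergence-free solution of the Stokes system with right-hand side ${\bf f}$); then \eqref{Newtonian-Stokes-R} follows from the continuity $\mathcal E_{\mathbb R^3}:{\mathcal H}^{-1}(\mathbb R^3)\to{\mathcal H}^{1}(\mathbb R^3)$ (cf.\ \cite[Theorem 4.1]{Ch-Mi-Na-3}, \cite[Theorem 3.2]{Lang-Mendez}), the continuity of $\nabla:{\mathcal H}^{1}(\mathbb R^3)\to L^2(\mathbb R^3)^3$, and the $L^2$- and $\dot H^{\pm1}$-boundedness of the Riesz transforms appearing in $\nabla\mathcal E_{\mathbb R^3}\,{\rm div}\,\mathcal E_{\mathbb R^3}$.

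For \eqref{Newtonian-Stokes-S}, I would note that the restriction operator $r_{\Omega_\pm}:{\mathcal H}^{1}(\mathbb R^3)\to{\mathcal H}^{1}(\Omega_\pm)$ is linear and continuous (indeed norm-nonincreasing), so its transpose is a continuous embedding $\widetilde{\mathcal H}^{-1}(\Omega_\pm)=\big({\mathcal H}^{1}(\Omega_\pm)\big)'\hookrightarrow\big({\mathcal H}^{1}(\mathbb R^3)\big)'={\mathcal H}^{-1}(\mathbb R^3)$ (using $\widetilde{\mathcal H}^{1}(\mathbb R^3)={\mathcal H}^{1}(\mathbb R^3)$, hence $\widetilde{\mathcal H}^{-1}(\mathbb R^3)={\mathcal H}^{-1}(\mathbb R^3)$), which is precisely extension by zero; by \eqref{Newtonian-D-B-F-S-M}, ${\boldsymbol{\mathcal N}}_{\Omega_\pm}=r_{\Omega_\pm}\circ{\boldsymbol{\mathcal N}}_{\mathbb R^3}$ and ${\mathcal Q}_{\Omega_\pm}=r_{\Omega_\pm}\circ{\mathcal Q}_{\mathbb R^3}$ on this embedded subspace, and composing with \eqref{Newtonian-Stokes-R} and with the trivially continuous restrictions $r_{\Omega_\pm}:{\mathcal H}^{1}(\mathbb R^3)\to{\mathcal H}^{1}(\Omega_\pm)$, $r_{\Omega_\pm}:L^{2}(\mathbb R^3)\to L^{2}(\Omega_\pm)$ yields \eqref{Newtonian-Stokes-S}. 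Finally, on the bounded domain $\Omega_+$ the weight satisfies $0<c\le\rho\le C<\infty$, so ${\mathcal H}^{1}(\Omega_+)=H^{1}(\Omega_+)$ with equivalent norms, whence by \eqref{duality-spaces} also $\widetilde{\mathcal H}^{-1}(\Omega_+)=\big({\mathcal H}^{1}(\Omega_+)\big)'=\big(H^{1}(\Omega_+)\big)'=\widetilde H^{-1}(\Omega_+)$ with equivalent norms; thus \eqref{Newtonian-Stokes+} is just the ``$+$'' case of \eqref{Newtonian-Stokes-S} reread in the standard-Sobolev notation. I expect the main obstacle to be the whole-space step \eqref{Newtonian-Stokes-R}: one has to control both the local singularity of the kernels $\mathcal G^{0}$, $\Pi$ at the origin and their decay at infinity, which is exactly what the identification of ${\mathcal H}^{\pm1}(\mathbb R^3)$ with the homogeneous Sobolev spaces (together with the Hardy/Sobolev inequality underlying the equivalence of norm and seminorm) does for us — modulo the customary care in handling the completion defining ${\mathcal H}^{1}(\mathbb R^3)$; the remaining two statements are then essentially formal.
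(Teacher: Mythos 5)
Your proposal is correct, and for the interior/exterior statements \eqref{Newtonian-Stokes-S} and \eqref{Newtonian-Stokes+} it coincides with the paper's argument: the paper likewise obtains \eqref{Newtonian-Stokes-S} by writing ${\boldsymbol{\mathcal N}}_{\Omega_\pm}=r_{\Omega_\pm}{\boldsymbol{\mathcal N}}_{\mathbb R^3}$, ${\mathcal Q}_{\Omega_\pm}=r_{\Omega_\pm}{\mathcal Q}_{\mathbb R^3}$ and viewing $\widetilde{\mathcal H}^{-1}(\Omega_\pm)^3$ as a subspace of ${\mathcal H}^{-1}(\mathbb R^3)^3$ (you justify this identification slightly more carefully, via the transpose of the restriction), and it obtains \eqref{Newtonian-Stokes+} from the equivalence ${\mathcal H}^{1}(\Omega_+)=H^{1}(\Omega_+)$, $\widetilde{\mathcal H}^{-1}(\Omega_+)=\widetilde H^{-1}(\Omega_+)$ on the bounded domain. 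The genuine difference is the whole-space step \eqref{Newtonian-Stokes-R}: the paper does not prove it but cites Alliot--Amrouche \cite[Propositions 4.6, 4.7]{Al-Am-1}, whereas you give a self-contained proof by identifying ${\mathcal H}^{\pm1}(\mathbb R^3)$ with the homogeneous Sobolev spaces (Hardy inequality plus density of ${\mathcal D}(\mathbb R^3)$) and applying Plancherel to the symbols of ${\mathcal G}^{0}$ and $\Pi$, which are homogeneous of degrees $-2$ and $-1$; your alternative route through the Newtonian potential of the Laplacian and the Leray projection is also sound, with the signs working out as you indicate. This is entirely in the spirit of the paper, which runs exactly this kind of Fourier/Hardy-inequality computation in its proof of the Brinkman analogue (Lemma \ref{Newtonian-B}, estimates \eqref{identity-2}); what your version buys is independence from the external reference, at the cost of having to be explicit that the extension by continuity from ${\mathcal D}(\mathbb R^3)^3$ agrees with the distributional definition of the potentials on all of ${\mathcal H}^{-1}(\mathbb R^3)^3$ -- a routine but worth-stating point.
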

\begin{proof}
The continuity of the operators in \eqref{Newtonian-Stokes-R} follow from \cite[Propositions 4.6, 4.7]{Al-Am-1}. Since $\widetilde{\mathcal H}^{-1}(\Omega_\pm)^3$ are subspaces of ${\mathcal H}^{-1}({\mathbb R}^3)^3$ {and ${\boldsymbol{\mathcal N}}_{\Omega_\pm}=r_{\Omega_\pm}{\boldsymbol{\mathcal N}}_{{\mathbb R}^3}$,
${\mathcal Q}_{\Omega_\pm}=r_{\Omega_\pm}{\mathcal Q}_{{\mathbb R}^3}$, we deduce the continuity of the} operators in \eqref{Newtonian-Stokes-S}. On the other hand, since $\widetilde{\mathcal H}^{-1}(\Omega_+)^3$ coincides with $\widetilde{H}^{-1}(\Omega_+)^3$ and ${\mathcal H}^{1}(\Omega_+)^3$ coincides with ${H}^{1}(\Omega_+)^3$, with equivalent norms, {\eqref{Newtonian-Stokes-S} also} implies the well-known continuity of {the operators} in \eqref{Newtonian-Stokes+}.
\end{proof}

{In the following lemma we} collect the main properties of layer potentials for the Stokes system defined in \eqref{58}-\eqref{double-layer-principal-value}.
\begin{lem}
\label{layer-potential-properties-Stokes}
Let
$\Omega :=\Omega _{+}\subset {\mathbb R}^3$ be a bounded Lipschitz domain with connected boundary $\partial\Omega $. Let $\Omega _{-}:={\mathbb R}^3\setminus \overline{{\Omega }}$.
%Consider the layer potential operators for the Stokes system $($i.e., for $\alpha=0$$)$.
\begin{itemize}
\item[$(i)$] The following operators are linear and bounded,
\begin{align}
\label{ss-s1-Stokes}
&\left({\bf V}_{\partial\Omega }\right)|_{\Omega _{+}}\!:\!H^{-\frac{1}{2}}(\partial\Omega )^3\to H^{1}(\Omega _{+})^3,\
\left({\mathcal Q}_{\partial\Omega }^s\right)|_{\Omega _{+}}\!:\!H^{-\frac{1}{2}}(\partial\Omega )^3\!\to L^2(\Omega _{+}),
\\
\label{ds-s1-Stokes}
&\left({\bf W}_{\partial\Omega }\right)|_{\Omega _{+}}:H^{\frac{1}{2}}(\partial\Omega )^3\to H^{1}(\Omega _{+})^3,\
\big({\mathcal Q}_{\partial\Omega }^d\big)|_{\Omega _{+}}\!:\!H^{\frac{1}{2}}(\partial\Omega )^3\to L^2(\Omega _{+}),\\
%\end{align}
%\begin{align}
\label{ss-s1-ms}
&{{\left({\bf V}_{\partial\Omega }\right)|_{\Omega_{-}}:H^{-\frac{1}{2}}(\partial\Omega )^3\to {\mathcal H}^{1}(\Omega _{-})^3,}\ {\left({\mathcal Q}_{\partial\Omega }^s\right)|_{\Omega _{-}}:H^{-\frac{1}{2}}(\partial\Omega )^3\to L^{2}(\Omega _{-})},}
\\
\label{ds-s1-ms}
&\left({\bf W}_{\partial\Omega }\right)|_{\Omega_{-}}:H^{\frac{1}{2}}(\partial\Omega )^3\to {\mathcal H}^{1}(\Omega _{-})^3,\
{\big({\mathcal Q}_{\partial\Omega }^d\big)|_{\Omega _{-}}:H^{\frac{1}{2}}(\partial\Omega )^3\to {L^{2}(\Omega _-)}}\\
\label{ds-s1-msH}
&\left({\bf W}_{\partial\Omega }\right)|_{\Omega_{-}}:H^{\frac{1}{2}}(\partial\Omega )^3\to {H}^{1}(\Omega _{-})^3.
\end{align}
\item[$(ii)$] Let ${\bf h}\in H^{\frac{1}{2}}(\partial\Omega )^3$ and ${\bf g}\in
H^{-\frac{1}{2}}(\partial\Omega )^3$.
Then the following relations hold a.e. on $\partial\Omega $,
\begin{align}
\label{68-Stokes}
&\gamma_+\big({\bf V}_{\partial\Omega }{\bf g}\big)=\gamma_{-}\big({\bf V}_{{\partial\Omega } }{\bf g}\big)=:{\boldsymbol{\mathcal V}}_{\partial\Omega }{\bf g},\\
\label{68-s1-Stokes}
&
\frac{1}{2}{\bf h}+\gamma_{+}({\bf W}_{\partial\Omega }{\bf h})=-\frac{1}{2}{\bf h}+\gamma_{-}({\bf W}_{\partial\Omega }{\bf h})=:{\bf K}_{{\partial\Omega }}{\bf h},
\\
\label{70aaa-Stokes}
&
-\frac{1}{2}{\bf g}+{{\bf t}_0^{+}}\left({\bf V}_{\partial\Omega }{\bf g},{\mathcal Q}_{\partial\Omega }^s{\bf g}\right)=\frac{1}{2}{\bf g}+{{\bf t}_0^{-}}\left({\bf V}_{\partial\Omega }{\bf g},{\mathcal Q}_{\partial\Omega }^s{\bf g}\right)=:{\bf K}_{{\partial\Omega } }^*{\bf g},
\\
\label{70aaaa-Stokes}
&{{\bf t}_0^{+}}\big({\bf W}_{\partial\Omega }{\bf h},{\mathcal Q}_{\partial\Omega }^d{\bf h}\big)
={{\bf t}_0^{-}}\big({\bf W}_{\partial\Omega }{\bf h},{\mathcal Q}_{\partial\Omega }^d{\bf h}\big)=:{\bf D}_{\partial\Omega }{\bf h},
\end{align}
where ${\bf K}_{\partial\Omega }^*$ is the transpose of ${\bf K}_{\partial\Omega }$, and the following operators are linear and bounded,
\begin{align}
\label{ss-s2-Stokes}
&{\boldsymbol{\mathcal V}}_{\partial\Omega }:H^{-\frac{1}{2}}(\partial\Omega )^3\to H^{\frac{1}{2}}(\partial\Omega )^3,\
{\bf K}_{\partial\Omega }:H^{\frac{1}{2}}(\partial\Omega )^3\to H^{\frac{1}{2}}(\partial\Omega )^3,\\
\label{ds-s2-Stokes}
&{\bf K}_{\partial\Omega }^*:H^{-\frac{1}{2}}(\partial\Omega )^3\to
H^{-\frac{1}{2}}(\partial\Omega )^3,\ {\bf D}_{{\partial\Omega }}:H^{\frac{1}{2}}(\partial\Omega )^3\to H^{-\frac{1}{2}}(\partial\Omega )^3.
\end{align}
\end{itemize}
\end{lem}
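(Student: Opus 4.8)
The plan is to reduce everything to the well-established bounded-domain theory of Mitrea and Wright \cite{M-W} and then to treat the exterior domain $\Omega_-$ in two independent steps: a \emph{local} step near $\partial\Omega$ (for all the jump relations and the interior elliptic regularity) and a \emph{decay-at-infinity} step (for membership in the weighted spaces). First I would record that the bounded-domain assertions --- boundedness of the operators in \eqref{ss-s1-Stokes} and \eqref{ds-s1-Stokes}, the non-tangential trace and conormal-derivative jump formulas \eqref{68-Stokes}--\eqref{70aaaa-Stokes}, and the boundedness of $\boldsymbol{\mathcal V}_{\partial\Omega}$, $\mathbf K_{\partial\Omega}$, $\mathbf K_{\partial\Omega}^*$, $\mathbf D_{\partial\Omega}$ in \eqref{ss-s2-Stokes}--\eqref{ds-s2-Stokes} --- are all contained in \cite[Chapters 3, 10]{M-W} (see also \cite{K-L-W}). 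The only thing to add here is the remark that the conormal-derivative jumps \eqref{70aaa-Stokes}, \eqref{70aaaa-Stokes} are consistent with the weak conormal derivative of Lemma~\ref{conormal-derivative-generalized-Robin} and Remark~\ref{Rm2.4}: since $(\mathbf V_{\partial\Omega}\mathbf g,\mathcal Q^s_{\partial\Omega}\mathbf g)$ and $(\mathbf W_{\partial\Omega}\mathbf h,\mathcal Q^d_{\partial\Omega}\mathbf h)$ solve the homogeneous Stokes system on either side of $\partial\Omega$, the canonical conormal derivative (with $\tilde{\mathbf f}=\mathbf 0$) is defined and, for the smooth densities dense in $H^{\pm\frac12}(\partial\Omega)^3$, coincides with the classical one, so the identities extend by density.

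Next, for $\Omega_-$: the operators $\boldsymbol{\mathcal V}_{\partial\Omega}$, $\mathbf K_{\partial\Omega}$, $\mathbf K_{\partial\Omega}^*$, $\mathbf D_{\partial\Omega}$ live solely on the compact boundary $\partial\Omega$, so \eqref{ss-s2-Stokes}, \eqref{ds-s2-Stokes} need nothing new. The interior regularity of $\mathbf V_{\partial\Omega}\mathbf g$, $\mathbf W_{\partial\Omega}\mathbf h$ and of the associated pressures on a bounded Lipschitz collar domain $\Omega'_-\subset\Omega_-$ with $\partial\Omega\subset\partial\Omega'_-$ again comes from \cite{M-W}; likewise the trace jumps \eqref{68-Stokes}, \eqref{68-s1-Stokes} in $\Omega_-$ are purely local statements, inherited from that collar. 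For the conormal-derivative jumps of $\mathbf t_0^-$ I would invoke the compatibility of the weak exterior conormal derivative of Lemma~\ref{conormal-derivative-generalized-Robin-S} with the one computed on $\Omega'_-$, as in Remark~\ref{CSD}, combined with the bounded-collar version of \eqref{70aaa-Stokes}, \eqref{70aaaa-Stokes}.

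The one genuinely new point is the behaviour at infinity, i.e.\ the weighted-space memberships in \eqref{ss-s1-ms}, \eqref{ds-s1-ms}, \eqref{ds-s1-msH}. Here I would use the explicit fundamental solution \eqref{fundamental-solution-Stokes}, \eqref{fundamental-solution-Brinkman2} together with the compactness of $\partial\Omega$. For $\mathbf g\in H^{-\frac12}(\partial\Omega)^3$ and $\mathbf x$ far from $\partial\Omega$, the map $\mathbf y\mapsto\mathcal G(\mathbf x-\mathbf y)$ and its $\mathbf x$-derivatives are smooth on $\partial\Omega$, and their $H^{\frac12}(\partial\Omega)$-norms in $\mathbf y$ are $O(|\mathbf x|^{-1})$, $O(|\mathbf x|^{-2})$, $O(|\mathbf x|^{-2})$ respectively; writing $\mathbf V_{\partial\Omega}\mathbf g(\mathbf x)=\langle\mathcal G(\mathbf x-\cdot),\mathbf g\rangle_{\partial\Omega}$ and estimating by the $H^{\frac12}$--$H^{-\frac12}$ duality one gets $\mathbf V_{\partial\Omega}\mathbf g(\mathbf x)=O(|\mathbf x|^{-1})$, $\nabla\mathbf V_{\partial\Omega}\mathbf g(\mathbf x)=O(|\mathbf x|^{-2})$, $\mathcal Q^s_{\partial\Omega}\mathbf g(\mathbf x)=O(|\mathbf x|^{-2})$, with constants controlled by $\|\mathbf g\|_{H^{-\frac12}(\partial\Omega)^3}$. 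Since $\rho(\mathbf x)^{-1}=O(|\mathbf x|^{-1})$ and $\int_{|\mathbf x|>R_0}|\mathbf x|^{-4}\,d\mathbf x<\infty$, exactly the computation used in the proof of Lemma~\ref{infty} gives $\rho^{-1}\mathbf V_{\partial\Omega}\mathbf g\in L^2(\Omega_-)^3$, $\nabla\mathbf V_{\partial\Omega}\mathbf g\in L^2(\Omega_-)^{3\times3}$ and $\mathcal Q^s_{\partial\Omega}\mathbf g\in L^2(\Omega_-)$, hence \eqref{ss-s1-ms}. For $\mathbf h\in H^{\frac12}(\partial\Omega)^3$ the double-layer and its pressure kernels decay one order faster, so $\mathbf W_{\partial\Omega}\mathbf h(\mathbf x)=O(|\mathbf x|^{-2})$, $\nabla\mathbf W_{\partial\Omega}\mathbf h(\mathbf x)=O(|\mathbf x|^{-3})$, $\mathcal Q^d_{\partial\Omega}\mathbf h(\mathbf x)=O(|\mathbf x|^{-3})$, which already gives $\mathbf W_{\partial\Omega}\mathbf h\in L^2(\Omega_-)^3$ \emph{without} a weight; together with the interior regularity this yields \eqref{ds-s1-msH}, and \eqref{ds-s1-ms} follows from the continuous embedding $H^1(\Omega_-)\hookrightarrow\mathcal H^1(\Omega_-)$. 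The main obstacle is the bookkeeping of these kernel estimates --- verifying that the $O(|\mathbf x|^{-k})$ bounds for the kernels and their derivatives are uniform over $\mathbf y\in\partial\Omega$ and depend continuously on the density --- after which the weighted-norm membership is a routine integration, and all remaining items are either quoted from \cite{M-W} or reduced to it by locality.
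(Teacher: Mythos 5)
Your proposal is correct, but it reaches the exterior-domain mapping properties by a genuinely different route than the paper. You obtain the weighted memberships \eqref{ss-s1-ms}--\eqref{ds-s1-msH} from explicit far-field kernel estimates: for $|\mathbf x|$ large, $\|\mathcal G(\mathbf x-\cdot)\|_{H^{1/2}(\partial\Omega)}=O(|\mathbf x|^{-1})$ with the corresponding gains for derivatives and for the pressure and double-layer kernels, and integration of $|\mathbf x|^{-4}$ over the exterior of a ball then gives the $L^2(\rho^{-1};\Omega_-)$ and $L^2(\Omega_-)$ bounds, the collar behaviour near $\partial\Omega$ being quoted from \cite{M-W}. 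The paper instead (a) cites the variational transmission-problem results of \cite[Propositions 5.2, 6.2]{Sa-Se} for \eqref{ss-s1-ms} and \eqref{ds-s1-ms}; (b) gives a structural argument for the single layer, writing $\mathbf V_{\partial\Omega}=\boldsymbol{\mathcal N}_{\mathbb R^3}\gamma'$ with $\gamma'$ the transpose of the trace and invoking the weighted continuity of the Newtonian potential (Lemma \ref{Newtonian-weight}), in the spirit of \cite{Co}; and (c) proves \eqref{ds-s1-msH} by expressing the Stokes double layer as first derivatives of single-layer potentials (one Laplace, two Stokes), which reduces the required $L^2(\Omega_-)^3$ bound to the already established weighted bound for $\mathbf V_{\partial\Omega}$. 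Your kernel-decay computation is more elementary and yields \eqref{ds-s1-msH} directly (the double-layer kernel decays like $|\mathbf x|^{-2}$, so no weight is needed), at the price of the bookkeeping you acknowledge; the paper's factorizations avoid pointwise estimates entirely and transfer immediately to the Brinkman case in Lemma \ref{layer-potential-properties}. For the bounded-domain statements and the jump relations \eqref{68-Stokes}--\eqref{70aaaa-Stokes} both you and the paper essentially cite \cite{M-W} together with the consistency of the weak conormal derivative (Remark \ref{Rm2.4}), so there is no substantive difference there; just note that on a Lipschitz boundary the passage from smooth densities is carried out in \cite{M-W} via non-tangential maximal function estimates rather than classical $C^1$ regularity up to the boundary, so your density remark should be read as deferring to that machinery.
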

\begin{proof}
All the above mentioned mapping properties of the layer potential operators for the Stokes system in Sobolev spaces on bounded Lipschitz domains, as well as their jump relations across a Lipschitz boundary, are well known, and we refer the reader to, e.g., \cite{Fa-Ke-Ve}, \cite{H-W}, {\cite[Propositions 4.2.5, 4.2.9, Corollary 4.3.2, Theorems 10.5.1, 10.5.3]{M-W}, \cite[Propositions 9.1, 9.2]{Sa-Se}}.
{The continuity of the operators \eqref{ss-s1-ms} and \eqref{ds-s1-ms} in weighted spaces in the exterior domains follow, e.g., from Propositions 5.2 and 6.2 in \cite{Sa-Se} deduced from the corresponding transmission problem solutions analyzed by the variational approach (cf. also \cite{B-H}, \cite[Theorem 4.1]{Ch-Mi-Na-3}, \cite[Theorem 4.1]{Lang-Mendez} for the Laplace {and some scalar operators} in weighted Sobolev spaces).}
{
Let us consider \eqref{ds-s1-msH}. Due to continuity of the first operator in \eqref{ds-s1-ms}, we only need to prove continuity of the operator
\begin{equation}
\label{WCont}
\left({\bf W}_{\partial\Omega }\right)|_{\Omega_{-}}:H^{\frac{1}{2}}(\partial\Omega )^3\to L^{2}(\Omega _{-})^3.
\end{equation}
By \eqref{fundamental-solution-Brinkman2-new}, \eqref{58} and \eqref{59},
$$
\big(\!{\bf W}_{\partial\Omega }{\bf h}\big)_k({\bf x})=
\partial _k\big(V_{\triangle }(\nu \cdot {\bf h})\big)({\bf x})
-\partial_j\big({\bf V}_{{\partial\Omega }}(\nu_{_{{j}}}{\bf h})\big)_k({\bf x})
-\partial_j\big({\bf V}_{{\partial\Omega }}(h_j\nu)\big)_k({\bf x}),
$$
where $V_{\triangle }:L^2(\partial \Omega )\to {\mathcal H}^{1}(\Omega _{-})$ is the single-layer potential for the Laplace operator, i.e.,
\begin{equation}
\label{Laplace-single-layer}
(V_{\triangle }g)({\bf x}):=-\int _{\partial \Omega }\frac{1}{4\pi }\frac{1}{|{\bf x}-{\bf y}|}g({\bf y})d\sigma _{{\bf y}},\ \ {\bf x}\in {\mathbb R}^3\setminus \partial \Omega ,
\end{equation}
which is continuous (cf. \cite[Theorem 4.1]{Ch-Mi-Na-3}, \cite[Theorem 4.1]{Lang-Mendez}).
If $h_i\in H^{\frac{1}{2}}(\partial\Omega)$, then $h_i\nu_{_j}\in L^{2}(\partial\Omega)\subset H^{-\frac{1}{2}}(\partial\Omega)$, and continuity of operator (\ref{Laplace-single-layer}) and of the first operator in \eqref{ss-s1-ms} imply continuity of {operator} \eqref{WCont}.
}

{Note {also} that continuity of the single layer operators \eqref{ss-s1-Stokes} and (\ref{ss-s1-ms}) can be shown in a more direct way, similar to the proof of Theorem 1 in \cite{Co} for the scalar case. Indeed, let ${\gamma }:{\mathcal H}^{1}({\mathbb R}^3)^3\to H^{\frac{1}{2}}(\partial \Omega )^3$ and ${\gamma }':H^{-\frac{1}{2}}(\partial \Omega )^3\to {\mathcal H}^{-1}({\mathbb R}^3)^3$ be the continuous trace operator and its continuous transpose operator, respectively.} The volume potential {operator} ${\boldsymbol{\mathcal N}}_{{\mathbb R}^3}:{\mathcal H}^{-1}({\mathbb R}^3)^3\to {\mathcal H}^{1}({\mathbb R}^3)^3$ is also continuous (see \eqref{Newtonian-Stokes-R} {in Lemma~\ref{Newtonian-weight}}). Then the single-layer potential {operator} {can be presented as ${\bf V}_{{\partial\Omega }}={\boldsymbol{\mathcal N}}_{{\mathbb R}^3}{\gamma }'$ and hence the operators}
\begin{align}
%\label{sl-1-ms}
{{\bf V}_{\partial\Omega }}
%={\boldsymbol{\mg\mathcal N}}_{{\mathbb R}^3} {\gamma }'
:H^{-\frac{1}{2}}(\partial \Omega )^3\to {\mathcal H}^{1}({\mathbb R}^3)^3,\
\left({\bf V}_{{\partial\Omega }}\right)|_{\Omega _{\pm }}:H^{-\frac{1}{2}}(\partial \Omega )^3\to {\mathcal H}^{1}(\Omega _{\pm })^3\nonumber
\end{align}
are continuous. Since ${\mathcal H}^{1}(\Omega _{+})^3={H}^{1}(\Omega _{+})^3$, continuity of the first operators in \eqref{ss-s1-Stokes} and \eqref{ss-s1-ms} also follows.
The pressure potential operator can be written as ${\mathcal Q}_{{\partial\Omega }}^s={\mathcal Q}_{{\mathbb R}^3}{\gamma }'$, where the operator ${\mathcal Q}_{{\mathbb R}^3}:{\mathcal H}^{-1}({\mathbb R}^3)^3\to L^2({\mathbb R}^3)$
%and ${\gamma }':H^{-\frac{1}{2}}(\partial \Omega )^3\to {\mathcal H}^{-1}({\mathbb R}^3)^3$
is continuous by \eqref{Newtonian-Stokes-R} in Lemma~\ref{Newtonian-weight}.
This implies continuity of the second operators in \eqref{ss-s1-Stokes} and \eqref{ss-s1-ms}.
\end{proof}

%Let us denote $\left[{\gamma }\ {\bf u}\right]:=\gamma_{+}{\bf u}-\gamma_{-}{\bf u}$.
Let $H_{\rm{div}}^{1}(\Omega_\pm)^3:=\{{\bf u}\in H^{1}(\Omega_\pm)^3:{\rm{div}}\ {\bf u}=0\mbox{ in } \Omega_\pm\}$.
Since the double layer potential for the Stokes system is divergence-{free,} {the jump property} \eqref{68-s1-Stokes} and continuity of the first operator in \eqref{ds-s1-Stokes} and of operator \eqref{ds-s1-msH} imply the following assertion.

\begin{lem}
\label{property-transm}
Let $\Omega _{+}\subset {\mathbb R}^3$ be a bounded Lipschitz domain with connected boundary and $\Omega _{-}:={\mathbb R}^3\setminus \overline{\Omega }_{+}$.
If $\mathbf h \in H^{\frac{1}{2}}(\partial \Omega )^3$, then there exists ${\bf u}\in H_{\rm{div}}^{1}(\Omega_\pm)^3$ such that
%\begin{equation}
%\label{div-trace-ms-1a}
$
%{\rm{div}}\ {\bf u}=0 \mbox{ in } \Omega_\pm ,\ \
{\gamma_{+}{\bf u}-\gamma_{-}{\bf u}=-\mathbf h \mbox{ on } \partial \Omega.}
$
%\end{equation}
%where $\left[{\gamma }\ {\bf u}\right]:=\gamma_{+}{\bf u}-\gamma_{-}{\bf u}$.
\end{lem}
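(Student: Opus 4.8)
The plan is to produce the required vector field explicitly as the Stokes double-layer potential with density $\mathbf h$. Concretely, I would set $\mathbf u:=\left({\bf W}_{\partial\Omega }\mathbf h\right)$ and verify the three properties demanded by the statement: that $\mathbf u$ belongs to $H^{1}$ on each of $\Omega_{+}$ and $\Omega_{-}$, that it is divergence-free on each side, and that it realizes the prescribed trace jump $\gamma_{+}\mathbf u-\gamma_{-}\mathbf u=-\mathbf h$ on $\partial\Omega$. All three are immediate consequences of the layer-potential properties already collected in Lemma~\ref{layer-potential-properties-Stokes}.

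First, for the interior regularity, the first mapping property in \eqref{ds-s1-Stokes} gives $\left({\bf W}_{\partial\Omega }\mathbf h\right)|_{\Omega_{+}}\in H^{1}(\Omega_{+})^3$ whenever $\mathbf h\in H^{\frac12}(\partial\Omega)^3$; for the exterior regularity I would invoke the stronger statement \eqref{ds-s1-msH}, which yields $\left({\bf W}_{\partial\Omega }\mathbf h\right)|_{\Omega_{-}}\in H^{1}(\Omega_{-})^3$ (and not merely the weighted membership of \eqref{ds-s1-ms}). Hence $\mathbf u\in H^{1}(\Omega_{\pm})^3$. For the divergence-free condition, I would recall that, by \eqref{Brinkman-operator4} and \eqref{Brinkman-operator6} with $\alpha=0$, the pair $({\bf W}_{\partial\Omega }\mathbf h,{\mathcal Q}_{\partial\Omega }^d\mathbf h)$ satisfies the Stokes system in ${\mathbb R}^3\setminus\partial\Omega$; in particular ${\rm div}\,{\bf W}_{\partial\Omega }\mathbf h=0$ in $\Omega_{\pm}$, so $\mathbf u\in H^{1}_{\rm div}(\Omega_{\pm})^3$.

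Finally, for the trace jump I would use the non-tangential boundary relation \eqref{68-s1-Stokes}, namely $\tfrac12\mathbf h+\gamma_{+}({\bf W}_{\partial\Omega }\mathbf h)=-\tfrac12\mathbf h+\gamma_{-}({\bf W}_{\partial\Omega }\mathbf h)$ a.e. on $\partial\Omega$; subtracting the two sides gives exactly $\gamma_{+}\mathbf u-\gamma_{-}\mathbf u=-\mathbf h$ on $\partial\Omega$. I do not expect a real obstacle here: the assertion is a direct corollary of the mapping and jump properties of the Stokes double-layer potential established earlier in the Appendix. The only point deserving a moment of care is to read the exterior regularity from \eqref{ds-s1-msH}, so that $\mathbf u$ genuinely lies in the unweighted space $H^{1}_{\rm div}(\Omega_{-})^3$ appearing in the statement rather than only in the weighted space $\mathcal H^{1}(\Omega_{-})^3$.
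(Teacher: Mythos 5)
Your proposal is correct and coincides with the paper's own argument: the paper also takes ${\bf u}={\bf W}_{\partial\Omega }{\bf h}$, obtains the interior and exterior $H^1$ memberships from the first operator in \eqref{ds-s1-Stokes} and from \eqref{ds-s1-msH}, uses that the Stokes double-layer potential is divergence-free, and reads the jump $\gamma_{+}{\bf u}-\gamma_{-}{\bf u}=-{\bf h}$ directly off \eqref{68-s1-Stokes}. Your remark about needing the unweighted exterior estimate \eqref{ds-s1-msH} rather than the weighted one \eqref{ds-s1-ms} is precisely the point the paper singles out as well.
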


{For {$\Omega'$ being $\Omega_+$, $\Omega_-$ or $\mathbb R^3$}, let} us introduce the following Hilbert space and its norm,
\begin{align}
\label{H-M}
\mathfrak M(\Omega')\!:=\!\{q\in L^2(\rho^{-1};\Omega'): \nabla q\in H^{-1}_{\rm curl}(\Omega')^3\},\
\|q\|^2_{\mathfrak M(\Omega')}\!:=\|\rho^{-1}q\|^2_{L^2(\Omega')}\!+\!\|\nabla q\|^2_{H^{-1}(\Omega')^3},
\end{align}
where
$H^{-1}_{\rm curl}(\Omega')^3:=\{\mathbf f\in H^{-1}(\Omega')^3: \mathrm{curl}\,\mathbf f=\mathbf 0\}$.
Let $\mathfrak M^*(\Omega')$ denote the space dual to $\mathfrak M(\Omega')$.
Then we have the following continuous embeddings chain:
\begin{align}\label{L2M}
L^2(\rho ;\Omega')\subset
{\mathfrak M^*(\Omega')}\subset
L^2(\Omega')\subset
{\mathfrak M}(\Omega')\subset
L^2(\rho^{-1} ;\Omega')
{\subset L^2_{\rm loc}(\Omega')}.
\end{align}
Note that the function $\rho^{-1}({\mathbf x})=(1+|{\mathbf x}|^2)^{-1/2}$ belongs to $\mathcal H^1(\mathbb R^3)\subset \mathfrak M(\mathbb R^3)$ but does not belong to $L^2(\mathbb R^3)$, thus proving that $L^2(\mathbb R^3)$ and $\mathfrak M(\mathbb R^3)$ {do not coincide} (and hence $L^2(\Omega_-)$ and $\mathfrak M(\Omega_-)$ {do not coincide either)}.
For ${\Omega'}=\Omega_+$ all the spaces in \eqref{L2M} evidently coincide with $L^2({\Omega_+})$ with equivalent norms.

\begin{lem}
\label{M-pr}
$($i$)$ If $q\in\mathfrak M(\mathbb R^3)$, then $q|_{\Omega_\pm}\in\mathfrak M(\Omega_\pm)$.
% and ${\rm grad}\,\mathring E_\pm(q|_{\Omega_\pm})\in\widetilde H^{-1}(\Omega_\pm)^3$.

$($ii$)$ If $q_\pm\in\mathfrak M(\Omega_\pm)$, then
${\rm grad}\,\mathring E_\pm q_\pm\in\widetilde H^{-1}_{\rm curl}(\Omega_\pm)^3$ and
$\mathring E_+ q_+ + \mathring E_- q_-\in\mathfrak M(\mathbb R^3)$.
\end{lem}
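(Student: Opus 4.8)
\emph{Proposal.} The plan is to handle the two assertions separately; (i) is essentially trivial, while (ii) requires a duality estimate near the interface $\partial\Omega$. For (i), observe that the weight $\rho$ is the same on $\mathbb R^3$ and on $\Omega_\pm$, so $q\in L^2(\rho^{-1};\mathbb R^3)$ gives $q|_{\Omega_\pm}\in L^2(\rho^{-1};\Omega_\pm)$ at once. Distributional differentiation is local, hence $\nabla(q|_{\Omega_\pm})=r_{\Omega_\pm}(\nabla q)$, and since $\nabla q\in H^{-1}(\mathbb R^3)^3$ and the restriction operator $r_{\Omega_\pm}:H^{-1}(\mathbb R^3)^3\to H^{-1}(\Omega_\pm)^3$ is bounded by the very definition of $H^{-1}(\Omega_\pm)$ as a space of restrictions, we get $\nabla(q|_{\Omega_\pm})\in H^{-1}(\Omega_\pm)^3$; it is automatically curl-free, being a gradient. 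Thus $q|_{\Omega_\pm}\in\mathfrak M(\Omega_\pm)$.

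For (ii), the real content is the membership $\mathrm{grad}(\mathring E_\pm q_\pm)\in H^{-1}(\mathbb R^3)^3$: once this is proved, $\mathrm{grad}(\mathring E_\pm q_\pm)$ is curl-free (a gradient) and supported in $\overline{\Omega}_\pm$, so it lies in $\widetilde H^{-1}_{\rm curl}(\Omega_\pm)^3$. I would establish it by a duality estimate. Since $\mathring E_\pm q_\pm\in L^2(\rho^{-1};\mathbb R^3)\subset L^2_{\rm loc}(\mathbb R^3)$ by \eqref{L2M}, for $\boldsymbol\phi\in\mathcal D(\mathbb R^3)^3$ one has $\langle\mathrm{grad}(\mathring E_\pm q_\pm),\boldsymbol\phi\rangle_{\mathbb R^3}=-\int_{\Omega_\pm}q_\pm\,\mathrm{div}\,\boldsymbol\phi\,d\mathbf x$, and it suffices to bound the right-hand side by $C\|q_\pm\|_{\mathfrak M(\Omega_\pm)}\|\boldsymbol\phi\|_{H^1(\mathbb R^3)^3}$; the conclusion then follows from the density of $\mathcal D(\mathbb R^3)^3$ in $H^1(\mathbb R^3)^3=\widetilde H^1(\mathbb R^3)^3$ and the identification $H^{-1}(\mathbb R^3)=(H^1(\mathbb R^3))'$.

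The obstruction to a direct bound is that $\nabla q_\pm\in H^{-1}(\Omega_\pm)^3=(\widetilde H^1(\Omega_\pm)^3)'$ may only be tested against fields with zero trace on $\partial\Omega$, whereas $\boldsymbol\phi|_{\Omega_\pm}$ does not vanish there. I would therefore subtract a trace corrector: set $\boldsymbol\Psi_\pm:=\gamma^{-1}_\pm(\gamma_\pm\boldsymbol\phi)\in H^1(\Omega_\pm)^3$ with $\gamma^{-1}_\pm$ a right inverse of the trace operator from Lemma~\ref{trace-operator1}, chosen so that $\boldsymbol\Psi_\pm$ has support in a fixed compact set (multiplying by a cut-off equal to $1$ near $\partial\Omega$ if necessary). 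Then $\boldsymbol\phi|_{\Omega_\pm}-\boldsymbol\Psi_\pm$ is compactly supported, belongs to $H^1(\Omega_\pm)^3$ with vanishing trace, hence to $\widetilde H^1(\Omega_\pm)^3$, and $\|\boldsymbol\phi|_{\Omega_\pm}-\boldsymbol\Psi_\pm\|_{\widetilde H^1(\Omega_\pm)^3}\le C\|\boldsymbol\phi\|_{H^1(\mathbb R^3)^3}$ by continuity of $\gamma_\pm$ and $\gamma^{-1}_\pm$. Splitting $-\int_{\Omega_\pm}q_\pm\,\mathrm{div}\,\boldsymbol\phi=-\int_{\Omega_\pm}q_\pm\,\mathrm{div}(\boldsymbol\phi|_{\Omega_\pm}-\boldsymbol\Psi_\pm)-\int_{\Omega_\pm}q_\pm\,\mathrm{div}\,\boldsymbol\Psi_\pm$, I would identify the first term with the $H^{-1}$--$\widetilde H^1$ pairing $\langle\nabla q_\pm,\boldsymbol\phi|_{\Omega_\pm}-\boldsymbol\Psi_\pm\rangle_{\Omega_\pm}$: this holds for $\mathcal D(\Omega_\pm)^3$-fields by definition of the distributional gradient and extends by approximating $\boldsymbol\phi|_{\Omega_\pm}-\boldsymbol\Psi_\pm$ in $\widetilde H^1(\Omega_\pm)^3$ by $\mathcal D(\Omega_\pm)^3$-fields supported in one fixed compact set, on which $q_\pm\in L^2$, so the first term is bounded by $\|\nabla q_\pm\|_{H^{-1}(\Omega_\pm)^3}\|\boldsymbol\phi|_{\Omega_\pm}-\boldsymbol\Psi_\pm\|_{\widetilde H^1(\Omega_\pm)^3}$. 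For the second term, with $\boldsymbol\Psi_\pm$ supported in a compact $K$ on which $\rho$ is bounded, one gets the bound $\|q_\pm\|_{L^2(\Omega_\pm\cap K)}\|\mathrm{div}\,\boldsymbol\Psi_\pm\|_{L^2(\Omega_\pm)}\le C_K\|\rho^{-1}q_\pm\|_{L^2(\Omega_\pm)}\|\boldsymbol\phi\|_{H^1(\mathbb R^3)^3}$. Adding the two estimates completes this step; for the bounded domain $\Omega_+$ the same argument applies with all spaces standard.

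Finally, set $q:=\mathring E_+q_++\mathring E_-q_-$; then $q\in L^2(\rho^{-1};\mathbb R^3)$ because $\Omega_+$ and $\Omega_-$ partition $\mathbb R^3$ up to a null set, and by linearity $\nabla q=\mathrm{grad}(\mathring E_+q_+)+\mathrm{grad}(\mathring E_-q_-)$, which by the step just carried out lies in $H^{-1}(\mathbb R^3)^3$ and is curl-free as a sum of curl-free fields; hence $q\in\mathfrak M(\mathbb R^3)$ by \eqref{H-M}. The step I expect to require the most care is the exterior case in the duality estimate: since $q_-$ need only belong to $L^2(\rho^{-1};\Omega_-)$ and not to $L^2(\Omega_-)$, the identity relating $\int_{\Omega_-}q_-\,\mathrm{div}\,\boldsymbol\psi$ to the $H^{-1}(\Omega_-)$--$\widetilde H^1(\Omega_-)$ duality is immediate only for compactly supported $\boldsymbol\psi$, which is precisely why the compactly supported corrector $\boldsymbol\Psi_-$ is indispensable.
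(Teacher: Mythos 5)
Your proof is correct, but it follows a genuinely different route from the paper's. The paper proves (ii) by decomposing the \emph{function} itself with a cut-off $\chi$ supported in a ball $B_{2r}\supset\overline\Omega_+$ and equal to $1$ on $B_r$: the piece $\chi\,\mathring E_- q_-$ is a compactly supported $L^2$ function (since $q_-\in L^2(\rho^{-1};\Omega_-)$ is square-integrable on bounded sets), so its gradient lies in $\widetilde H^{-1}(\Omega_-)^3$ for free; for the piece $(1-\chi)\mathring E_- q_-$ the Leibniz rule gives $(1-\chi)\nabla\mathring E_- q_- - \mathring E_- q_-\,\nabla\chi$, and both terms land in $\widetilde H^{-1}(\Omega_-)^3$ because $1-\chi$ kills a neighbourhood of $\partial\Omega$ (so no interface distribution appears and the hypothesis $\nabla q_-\in H^{-1}(\Omega_-)^3$ can be used directly) while $\nabla\chi$ is supported in the annulus $B_{2r}\setminus\overline B_r$ where $q_-$ is $L^2$. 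You instead decompose the \emph{test function} by subtracting a compactly supported trace corrector $\gamma_\pm^{-1}(\gamma_\pm\boldsymbol\phi)$ and run a duality estimate. Both arguments hinge on the same two facts — $q_\pm$ is locally $L^2$ up to $\partial\Omega$, and $\nabla q_\pm$ is only available as a functional on zero-trace fields — but the paper's version is shorter and needs no trace/extension operators, whereas yours has the advantage of producing an explicit operator bound $\|\mathrm{grad}\,\mathring E_\pm q_\pm\|_{\widetilde H^{-1}(\Omega_\pm)^3}\le C\|q_\pm\|_{\mathfrak M(\Omega_\pm)}$, at the price of the (standard but not entirely free) approximation of compactly supported zero-trace $H^1$ fields by $\mathcal D(\Omega_-)^3$ fields with supports confined to a fixed compact set, which is exactly what makes the pairing with $q_-\in L^2(\rho^{-1};\Omega_-)$ pass to the limit. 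Your treatment of (i) and of the final assembly $\mathring E_+q_++\mathring E_-q_-\in\mathfrak M(\mathbb R^3)$ matches the paper's.
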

\begin{proof}
Item (i) follows from definition of the spaces $\mathfrak M(\mathbb R^3)$ and $\mathfrak M(\Omega_\pm)$ in \eqref{H-M}.

(ii) If $q_+\in\mathfrak M(\Omega_+)$ then $q_+\in L^2(\Omega_+)$ and evidently
${\rm grad}\,\mathring E_+ q_+\in\widetilde H^{-1}(\Omega_+)^3$.
Suppose now that $q_-\in\mathfrak M(\Omega_-)$.
Let $B_r$ be an open ball of radius $r$ such that $\overline{\Omega}_+\subset B_r$, and let
$\chi \in \mathcal D({\mathbb R}^3)$ be such that ${\rm{supp}}\, \chi \subseteq B_{2r}$, $0\leq \chi \leq 1$ and $\chi=1$ in $B_r$.
%$\chi\in \mathcal D(B_{2r})$ such that $\chi=1$ in $B_r$.
Then $\mathring E_- q_-=q_1+q_2$, where
$q_1:=\chi\mathring E_- q_-\in \widetilde H^0(\Omega_-)$, and thus $\nabla q_1\in \widetilde H^{-1}(\Omega_-)^3$,
while
$q_2:=(1-\chi)\mathring E_- q_-=\mathring E_-((1-\chi) q_-)\in L^2(\rho^{-1};\mathbb R^3)$ and
moreover,
\begin{align}
\label{nq2}
\nabla q_2=\nabla((1-\chi)\mathring E_-q_-)=(1-\chi)\nabla\mathring E_- q_{-}-\mathring E_-q_-\nabla\chi .
\end{align}
Since {$\nabla\chi$ is zero outside $B_{2r}$ and $\chi=1$ inside $B_{r}$, then $\nabla\chi=0$ in the {set complementary to} $B_{2r}\setminus \overline{B}_r$}, and we have for the last term in \eqref{nq2},
$\mathring E_-q_-\nabla\chi\in \widetilde H^0(\Omega_-)^3\subset \widetilde H^{-1}(\Omega_-)^3$.
On the other hand, the membership $q_-\in\mathfrak M(\Omega_-)$ implies $\nabla q_-\in H^{-1}(\Omega_-)^3$ and since $(1-\chi) \nabla \mathring E_-q_-=\mathring E_-((1-\chi) \nabla q_-)=0$ in $B_r\supset\partial\Omega$, we obtain  $(1-\chi) \nabla \mathring E_-q_-\in\widetilde H^{-1}(\Omega_-)$. Thus $\nabla q_2\in \widetilde H^{-1}(\Omega_-)^3$ and hence ${\rm grad}\,\mathring E_- q_-\in\widetilde H^{-1}(\Omega_-)^3$. {Since ${\rm curl}\,{\rm grad}\,\mathring E_\pm q_\pm=\mathbf 0$}
%{\rd (due to the fact that the embedding ${\mathcal D}(\Omega _{-})\subset L^2(\rho ^{-1};\Omega _{-})$ has dense range)},
we obtain that ${\rm grad}\,\mathring E_\pm q_\pm\in\widetilde H^{-1}_{\rm curl}(\Omega_\pm)^3$. {In addition,} the last membership of statement (ii) immediately follows as well.
\end{proof}

%\subsection*{\bf The Newtonian potentials for the Brinkman system {\cn in weighted and standard Sobolev spaces}}
%{Let us now consider the Newtonian potentials for the Brinkman system.}

Let $J^t$ be the Bessel potential operator of order $t$ defined by $J^t u =\mathcal F^{-1} (\rho^t\hat u)$, where $\hat u=\mathcal F u$ is the Fourier transform and $\rho({\boldsymbol\xi})=(1+|{\boldsymbol\xi}|^2)^{\frac{1}{2}}$ as defined in Section~\ref{S2.2}. By definition of the Bessel-potential spaces (see, e.g., \cite[Section 3]{Lean}),
\begin{align}\label{hatrho}
\|g\|_{H^{t}({\mathbb R}^3)}=\| \rho^{t}\hat g\|_{L^2({\mathbb R}^3)}, \
\|\rho^{t}g\|_{L^2({\mathbb R}^3)}=\|\hat g\|_{H^{t}({\mathbb R}^3)},\ \  \forall \ t\in\mathbb R.
\end{align}

\begin{lem}
\label{Newtonian-B}
The following Newtonian potential operators for the Brinkman system, $\alpha>0$, are linear and continuous,
\begin{align}
\label{Newtonian-B-R}
{\boldsymbol{\mathcal N}}_{\alpha;\mathbb R^3}:{H}^{-1}(\mathbb R^3)^3\to {H}^{1}(\mathbb R^3)^3,&
\quad {\mathcal Q}_{\mathbb R^3}:{H}^{-1}(\mathbb R^3)^3\to {\mathfrak M(\mathbb R^3)},\\
\label{Newtonian-Brinkman-SM}
{\boldsymbol{\mathcal N}}_{\alpha;\Omega_\pm}:\widetilde{H}^{-1}(\Omega_\pm)^3\to {H}^{1}(\Omega_\pm)^3,&
\quad {\mathcal Q}_{{\Omega }_\pm}:\widetilde{H}^{-1}({\Omega }_\pm)^3\to {\mathfrak M({\Omega }_\pm)},\\
\label{Newtonian-Brinkman+}
&\quad
{\mathcal Q}_{\Omega_+}:\widetilde{H}^{-1}(\Omega_+)^3\to L^{2}(\Omega_+).
%,\\
%\label{nablaNewtonian-B-R}
%\quad \nabla{\mg\mathcal Q}_{\mathbb R^3}:{H}^{-1}(\mathbb R^3)^3\to {H}^{-1}(\mathbb R^3)^3,&
%\quad \nabla{\mg\mathcal Q}_{{\Omega }_\pm}:\widetilde{H}^{-1}({\Omega }_\pm)^3\to {H}^{-1}({\Omega }_\pm)^3.
\end{align}
\end{lem}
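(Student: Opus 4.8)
The plan is to reduce everything to the whole-space operators $\boldsymbol{\mathcal N}_{\alpha;\mathbb R^3}$ and $\mathcal Q_{\mathbb R^3}$, to treat these by Fourier multipliers, and then to pass to $\Omega_\pm$ by restriction. First I would record the symbols: applying $\mathcal F$ to the identities \eqref{Newtonian-s1-sm} (which encode \eqref{Brinkman-operator4}) and solving for the transforms gives, for $\boldsymbol\varphi\in\mathcal D(\mathbb R^3)^3$,
\[
\widehat{\boldsymbol{\mathcal N}_{\alpha;\mathbb R^3}\boldsymbol\varphi}
=-\frac{1}{4\pi^2|\boldsymbol\xi|^2+\alpha}\Big(\mathbb I-\frac{\boldsymbol\xi\boldsymbol\xi^{\top}}{|\boldsymbol\xi|^2}\Big)\widehat{\boldsymbol\varphi},
\qquad
\widehat{\mathcal Q_{\mathbb R^3}\boldsymbol\varphi}=\frac{i}{2\pi}\,\frac{\boldsymbol\xi\cdot\widehat{\boldsymbol\varphi}}{|\boldsymbol\xi|^2},
\]
so in particular $\mathcal Q_{\mathbb R^3}$ does not depend on $\alpha$. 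Since $\mathbb I-|\boldsymbol\xi|^{-2}\boldsymbol\xi\boldsymbol\xi^{\top}$ is an orthogonal projector and, for $\alpha>0$, the scalar factor obeys $(4\pi^2|\boldsymbol\xi|^2+\alpha)^{-1}\le C_\alpha(1+|\boldsymbol\xi|^2)^{-1}$ on all of $\mathbb R^3$, the symbol of $\boldsymbol{\mathcal N}_{\alpha;\mathbb R^3}$ is dominated by $C_\alpha(1+|\boldsymbol\xi|^2)^{-1}$; by the norm characterizations \eqref{hatrho} this yields at once that $\boldsymbol{\mathcal N}_{\alpha;\mathbb R^3}:H^s(\mathbb R^3)^3\to H^{s+2}(\mathbb R^3)^3$ is continuous for every $s\in\mathbb R$ (in particular for $s=-1$), while ${\rm div}\,\boldsymbol{\mathcal N}_{\alpha;\mathbb R^3}\boldsymbol\varphi=0$ from the form of the projector. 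This is exactly where $\alpha>0$ is used: for $\alpha=0$ the factor $|\boldsymbol\xi|^{-2}$ is singular at the origin, which is why the Stokes Newtonian velocity potential has to be placed in the weighted spaces of Lemma~\ref{Newtonian-weight}.

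For the pressure potential on $\mathbb R^3$ I would verify the two conditions defining $\mathfrak M(\mathbb R^3)$. The gradient $\nabla(\mathcal Q_{\mathbb R^3}{\bf f})$ has symbol $-|\boldsymbol\xi|^{-2}\boldsymbol\xi\boldsymbol\xi^{\top}$, which is bounded, so $\nabla\mathcal Q_{\mathbb R^3}:H^{-1}(\mathbb R^3)^3\to H^{-1}(\mathbb R^3)^3$ is continuous and its range, consisting of gradients, lies in $H^{-1}_{\rm curl}(\mathbb R^3)^3$. For the membership $\mathcal Q_{\mathbb R^3}{\bf f}\in L^2(\rho^{-1};\mathbb R^3)$ I would decompose ${\bf f}\in H^{-1}(\mathbb R^3)^3$ as ${\bf f}={\bf g}^{(0)}+\sum_{k=1}^3\partial_k{\bf g}^{(k)}$ with ${\bf g}^{(0)}=(\mathbb I-\triangle)^{-1}{\bf f}$ and ${\bf g}^{(k)}=-\partial_k{\bf g}^{(0)}$, so that ${\bf g}^{(k)}\in L^2(\mathbb R^3)^3$ and $\sum_k\|{\bf g}^{(k)}\|_{L^2}\le C\|{\bf f}\|_{H^{-1}}$. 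For $k\ge1$ the operator ${\bf g}\mapsto\mathcal Q_{\mathbb R^3}(\partial_k{\bf g})$ is a bounded Fourier multiplier (symbol $-|\boldsymbol\xi|^{-2}\xi_k\boldsymbol\xi^{\top}$), hence $\mathcal Q_{\mathbb R^3}(\partial_k{\bf g}^{(k)})\in L^2(\mathbb R^3)\subset L^2(\rho^{-1};\mathbb R^3)$; while $\nabla(\mathcal Q_{\mathbb R^3}{\bf g}^{(0)})$ again has bounded symbol, so $\mathcal Q_{\mathbb R^3}{\bf g}^{(0)}$ lies in the homogeneous Sobolev space $\dot H^1(\mathbb R^3)$, and Hardy's inequality $\int_{\mathbb R^3}|{\bf x}|^{-2}|u|^2\,d{\bf x}\le 4\int_{\mathbb R^3}|\nabla u|^2\,d{\bf x}$ together with $\rho^{-2}\le|{\bf x}|^{-2}$ gives $\dot H^1(\mathbb R^3)\hookrightarrow L^2(\rho^{-1};\mathbb R^3)$ (alternatively one may invoke Lemma~\ref{behavior-infinity-s}, the constant at infinity being forced to vanish by the Sobolev embedding into $L^6$). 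Adding the two contributions yields $\mathcal Q_{\mathbb R^3}{\bf f}\in\mathfrak M(\mathbb R^3)$ with $\|\mathcal Q_{\mathbb R^3}{\bf f}\|_{\mathfrak M(\mathbb R^3)}\le C\|{\bf f}\|_{H^{-1}(\mathbb R^3)^3}$; by density of $\mathcal D(\mathbb R^3)^3$ in $H^{-1}(\mathbb R^3)^3$ this extends $\mathcal Q_{\mathbb R^3}$ to a continuous operator on the whole space, proving \eqref{Newtonian-B-R}.

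Finally I would pass to $\Omega_\pm$. Since $\widetilde H^{-1}(\Omega_\pm)^3$ is the subspace of $H^{-1}(\mathbb R^3)^3$ of distributions supported in $\overline\Omega_\pm$, and $\boldsymbol{\mathcal N}_{\alpha;\Omega_\pm}=r_{\Omega_\pm}\boldsymbol{\mathcal N}_{\alpha;\mathbb R^3}$, $\mathcal Q_{\Omega_\pm}=r_{\Omega_\pm}\mathcal Q_{\mathbb R^3}$ by \eqref{Newtonian-D-B-F-S-M}, the continuity of the restrictions $r_{\Omega_\pm}:H^1(\mathbb R^3)^3\to H^1(\Omega_\pm)^3$ and $r_{\Omega_\pm}:\mathfrak M(\mathbb R^3)\to\mathfrak M(\Omega_\pm)$ --- the latter being Lemma~\ref{M-pr}(i), with the bound $\|q|_{\Omega_\pm}\|_{\mathfrak M(\Omega_\pm)}\le\|q\|_{\mathfrak M(\mathbb R^3)}$ immediate from the definition \eqref{H-M} and $\nabla(q|_{\Omega_\pm})=(\nabla q)|_{\Omega_\pm}$ --- gives \eqref{Newtonian-Brinkman-SM}. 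The refinement \eqref{Newtonian-Brinkman+} follows at once, since $\Omega_+$ is bounded and hence $\mathfrak M(\Omega_+)=L^2(\Omega_+)$ with equivalent norms. The main obstacle in the whole argument is the weighted bound $\mathcal Q_{\mathbb R^3}{\bf f}\in L^2(\rho^{-1};\mathbb R^3)$: the pressure symbol is singular like $|\boldsymbol\xi|^{-1}$ at the origin, so $\mathcal Q_{\mathbb R^3}{\bf f}$ is in general neither in $L^2(\mathbb R^3)$ nor decaying at any prescribed rate, and it is only the splitting of the $H^{-1}$ data together with the Hardy inequality (equivalently, with Lemma~\ref{behavior-infinity-s}) that keeps it controlled in the weight $\rho^{-1}$.
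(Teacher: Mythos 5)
Your argument is correct, and it reaches the same conclusions by a noticeably different route in the two places where the paper has to work. For the velocity potential \eqref{Newtonian-B-R} you compute the Fourier symbol $-(4\pi^2|\boldsymbol\xi|^2+\alpha)^{-1}(\mathbb I-|\boldsymbol\xi|^{-2}\boldsymbol\xi\boldsymbol\xi^{\top})$ and read off $H^s\to H^{s+2}$ continuity for all $s$ directly from \eqref{hatrho}; the paper instead quotes the $L^2\to H^2$ bound from McCracken/Deuring, dualizes to $H^{-2}\to L^2$, and interpolates. Your version is more self-contained and makes transparent exactly where $\alpha>0$ enters (the symbol stays bounded at $\boldsymbol\xi=0$). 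For the delicate weighted bound $\mathcal Q_{\mathbb R^3}:H^{-1}(\mathbb R^3)^3\to L^2(\rho^{-1};\mathbb R^3)$, the paper uses the Plancherel identity $\|\rho^{-1}q\|_{L^2}=\|\hat q\|_{H^{-1}}$ to trade the weight for negative smoothness on the Fourier side, and then estimates $\|\hat q\|_{H^{-1}}$ by duality using Hardy's inequality applied to $\hat v$; you instead split the datum as $\mathbf f=(\mathbb I-\triangle)^{-1}\mathbf f+\sum_k\partial_k\mathbf g^{(k)}$ with $L^2$ pieces, handle the divergence terms by bounded multipliers, and apply Hardy's inequality in physical space to the remaining $\dot H^1$ piece. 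Both rest on the same Hardy inequality, but your physical-space version requires the extra verification that the additive constant of $\mathcal Q_{\mathbb R^3}\mathbf g^{(0)}$ vanishes (which you correctly supply via the $L^6$ embedding or Lemma~\ref{behavior-infinity-s}), whereas the paper's Fourier-side identity sidesteps this entirely. The treatment of $\nabla\mathcal Q_{\mathbb R^3}$ as a bounded curl-free multiplier and the passage to $\Omega_\pm$ via restriction, Lemma~\ref{M-pr}(i), and $\mathfrak M(\Omega_+)=L^2(\Omega_+)$ coincide with the paper's.
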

\begin{proof} In view of {e.g., \cite[Theorem 3.10]{McCracken1981},} \cite[Lemma 1.3]{Deuring} the volume potential {operator}
%\begin{align}
%\label{volume-potential}
$
{\boldsymbol{\mathcal N}}_{\alpha;{\mathbb R}^3}:L^2({\mathbb R}^3)^3\to H^{2}({\mathbb R}^3)^3
$
%\end{align}
is continuous. Since ${\boldsymbol{\mathcal N}}_{\alpha ;{\mathbb R}^3}$ and its adjoint ${\boldsymbol{\mathcal N}}_{\alpha ;{\mathbb R}^3}'$ coincide on test functions in ${\mathbb R}^3$, we deduce by duality that the operator
%\begin{align}
%\label{volume-potential-1}
$
{\boldsymbol{\mathcal N}}_{\alpha ;{\mathbb R}^3}:H^{-2}({\mathbb R}^3)^3\to L^{2}({\mathbb R}^3)^3
$
%\end{align}
is continuous as well. Then by interpolation the operator
${\boldsymbol{\mathcal N}}_{\alpha ;{\mathbb R}^3}:H^{-1}({\mathbb R}^3)^3\to H^{1}({\mathbb R}^3)^3$
is also continuous. By the continuity of the {Newtonian velocity} potential operator \eqref{Newtonian-B-R}, as well as the continuity of the operators $r_{\Omega _{\pm}}:{H}^{1}({\mathbb R}^3)^3\to {H}^{1}(\Omega _{\pm})^3$ and since $\widetilde{H}^{-1}({\Omega }_{\pm})^3$ is a subspace of ${H}^{-1}({\mathbb R}^3)^3$, we deduce that
the Newtonian potential {operators} {in the left of} \eqref{Newtonian-Brinkman-SM} are also continuous.

Let $\boldsymbol{\varphi}\in {H}^{-1}(\mathbb R^3)^3$ and $q={\mathcal Q}_{\mathbb R^3}\boldsymbol{\varphi}$.
Due to \eqref{hatrho},
%We remark that
%\begin{multline*}
%\begin{align}
%\label{identity-1}
$
\|\rho^{-1}q\|_{{L^2}(\mathbb R^3)}
=\|\hat q\|_{H^{-1}(\mathbb R^3)}.
$
%\le \|\hat q\|_{\mathcal H^{-1}(\mathbb R^3)}.
%\le \|\hat q\|_{L_2(\\mathbb R^3)}.
%\end{align}
%\end{multline*}

By \eqref{QoT} and \eqref{5.4.21}, we have (see, e.g., \cite[Equation (3.5)]{McCracken1981})
\begin{align}
\label{hatq}
\hat q({\boldsymbol\xi})=\widehat{\Pi}_j({\boldsymbol\xi}){\hat\varphi}_j({\boldsymbol\xi})
=\frac{-i{\boldsymbol\xi}\cdot\boldsymbol{\hat\varphi}({\boldsymbol\xi})}{(2\pi)^{\frac{3}{2}}|{\boldsymbol\xi}|^2}.
\end{align}
%Since $|{\mg\boldsymbol\xi}|\hat q$ is locally integrable,
Then, to estimate {$\|\hat q\|_{H^{-1}(\mathbb R^3)}$} we have the inequalities
\begin{align}
\label{identity-2}
(2\pi)^{\frac{3}{2}}|\langle\hat q, \hat v\rangle_{\mathbb R^3}|
&=\big|\langle|{\boldsymbol\xi}|^{-2}{\boldsymbol\xi}\cdot\boldsymbol{\hat\varphi},\hat v\rangle_{{L^2}(\mathbb R^3),\ {L^2}(\mathbb R^3)}\big|
\le \|\rho^{-1}\boldsymbol{\hat\varphi}\|_{{L^2(\mathbb R^3)^3}}\ \|\ |{\boldsymbol\xi}|^{-1}\rho \hat v\ \|_{{L^2}(\mathbb R^3)}\nonumber\\
&\le \|\boldsymbol{\varphi}\|_{H^{-1}(\mathbb R^3)^3}\left\|(1+|{\boldsymbol\xi}|^{-2})^{{\frac{1}{2}}} \hat v\right \|_{L^2(\mathbb R^3)}
\le \|\boldsymbol{\varphi}\|_{H^{-1}(\mathbb R^3)^3} \left\|(1+|{\boldsymbol\xi}|^{-1})\hat v\right \|_{L^2(\mathbb R^3)}
\nonumber\\
&\le \|\boldsymbol{\varphi}\|_{H^{-1}(\mathbb R^3)^3} \left(\|\hat v\|_{L^2(\mathbb R^3)}+2\|\nabla \hat v\|_{L^2(\mathbb R^3)}\right)
%\nonumber\\
%&
\le 2\|\boldsymbol{\varphi}\|_{H^{-1}(\mathbb R^3)^3} \|\hat v\|_{H^1(\mathbb R^3)},
\end{align}
that hold for any $\hat v\in H^1(\mathbb R^3)$ by the Hardy inequality,
$\|\ |{\boldsymbol\xi}|^{-1} \hat v({\boldsymbol\xi})\ \|_{L^2({\mathbb R}^3)}\le 2\|\nabla \hat v\|_{L^2({\mathbb R}^3)^3}$
(see, e.g., %\cite[Corollary 2.5]{Borsuk-Kondratiev2006},
\cite[Inequality (1.3.3)]{Maz'ya2011}).
This implies
$$
\|\rho^{-1}q\|_{L^2(\mathbb R^3)}
=\|\hat q\|_{H^{-1}(\mathbb R^3)}
\le\frac{2}{(2\pi)^{\frac{3}{2}}}\|\boldsymbol{\varphi}\|_{H^{-1}(\mathbb R^3)^3}
$$
and hence continuity of the operator
${\mathcal Q}_{\mathbb R^3}:{H}^{-1}(\mathbb R^3)^3\to {L^2(\rho^{-1};\mathbb R^3)}$.
Continuity of the operator
${\rm grad}{\mathcal Q}_{\mathbb R^3}:{H}^{-1}(\mathbb R^3)^3\to {H}^{-1}_{\rm curl}(\mathbb R^3)^3$
%in the left of \eqref{nablaNewtonian-B-R}
follows from \eqref{hatq}. Indeed, we have 
\begin{align}
\|{\rm{grad}}({\mathcal Q}_{\mathbb R^3}\boldsymbol{\varphi})\|_{H^{-1}(\mathbb R^3)^3}&=
\|{\rho }^{-1}{\mathcal F}({\rm{grad}}({\mathcal Q}_{\mathbb R^3}\boldsymbol{\varphi}))\|_{L^2(\mathbb R^3)^3}=\frac{2\pi }{(2\pi )^{\frac{3}{2}}}\left\|{\rho }^{-1}i\boldsymbol{\xi }\frac{\boldsymbol{\xi }}{|\boldsymbol{\xi }|^2}\cdot \hat{\boldsymbol{\varphi}}\right\|_{L^2(\mathbb R^3)^3}\nonumber\\
&\leq \frac{1}{(2\pi )^{\frac{1}{2}}}\|{\rho }^{-1}\hat{\boldsymbol{\varphi}}\|_{L^2(\mathbb R^3)^3}=
\frac{1}{(2\pi )^{\frac{1}{2}}}\|\boldsymbol{\varphi}\|_{H^{-1}(\mathbb R^3)^3},\nonumber
\end{align}
%Thus, we obtain the inequality
%$$\|{\rm{grad}}({\mathcal Q}_{\mathbb R^3}\boldsymbol{\varphi})\|_{H^{-1}(\mathbb R^3)^3}\leq
%\frac{1}{(2\pi )^{\frac{1}{2}}}\|\boldsymbol{\varphi}\|_{H^{-1}(\mathbb R^3)^3},$$
which shows the desired continuity. This, in turn, implies continuity of the operator in the right of \eqref{Newtonian-B-R} and, by Lemma~\ref{M-pr}(i), also continuity of the operator in the right of \eqref{Newtonian-Brinkman-SM} along with the well-known continuity of operator \eqref{Newtonian-Brinkman+}.
\end{proof}
\comment{Arguments for the continuity of the operator ${\rm grad}{\mathcal Q}_{\mathbb R^3}:{H}^{-1}(\mathbb R^3)^3\to {H}^{-1}_{\rm curl}(\mathbb R^3)^3$:
$|\mathcal F{\rm grad}{\mathcal Q}_{\mathbb R^3}\boldsymbol{\varphi}|
=(2\pi)^{1/2}|\boldsymbol\xi\hat q({\boldsymbol\xi})|
\leq (2\pi)^{-1}|\boldsymbol{\hat\varphi}|$. This implies $$\|{\rm grad}{\mathcal Q}_{\mathbb R^3}\boldsymbol{\varphi}\|_{H^{-1}(...)}=\|\rho ^{-1}\mathcal F{\rm grad}{\mathcal Q}_{\mathbb R^3}\boldsymbol{\varphi}\|_{L^2(...)}\leq (2\pi)^{-1}\|\rho ^{-1}\boldsymbol{\hat\varphi}\|_{L^2(...)}=(2\pi)^{-1}\|\varphi \|_{H^{-1}(...)}.$$}

%Arguments for the inequality (\ref{identity-2}):
%Since $\rho({\mg\boldsymbol\xi} )=(1+|{\mg\boldsymbol\xi} |^2)^{1/2}$ we obtain
%\begin{align}
%\|\  |{\mg\boldsymbol\xi}|^{-1}\rho\hat v\ \|_{{L^2}(\mathbb R^3)}^2& =\left\|(1+|{\mg\boldsymbol\xi}|^{-2})^{{\frac{1}{2}}}\hat %v\right \|_{L^2(\mathbb R^3)}^2\nonumber\\
%&=\int_{{\mathbb R}^3}(1+|{\mg\boldsymbol\xi}|{^{-2}})|\hat v|^2dx=\|\hat v\|_{L^2(\mathbb R^3)}^2+\| |{\mg\boldsymbol\xi} |^{-1}\hat %v\|_{L^2(\mathbb R^3)}^2\nonumber\\
%&\leq \|\hat v\|_{L^2(\mathbb R^3)}^2+(2\|\nabla \hat v\|_{L^2(\mathbb R^3)})^2\nonumber\\
%&\leq (\|\hat v\|_{L^2(\mathbb R^3)}+2\|\nabla \hat v\|_{L^2(\mathbb R^3)})^2
%\end{align}
%and hence
%$$\left\|(1+|{\mg\boldsymbol\xi}|^{-2})^{{\frac{1}{2}}}\hat v\right \|_{L^2(\mathbb R^3)}\leq \|\hat v\|_{L^2(\mathbb %R^3)}+2\|\nabla \hat v\|_{L^2(\mathbb R^3)}$$

If
$
({\bf u}_{\pm},\pi_{\pm} ,{\tilde{\bf f}}_{\pm})\in
H^{1}_{\rm{div}}({\Omega_{\pm}})^3\times
\mathfrak M({\Omega_{\pm}})\times
\widetilde{H}^{-1}({\Omega_{\pm}})^3$ is such that
${\boldsymbol{\mathcal L}}_{\alpha }({\bf u}_{\pm},\pi_{\pm} )={\tilde{\bf f}}_{\pm}|_{\Omega_{\pm}},
%\mbox{ and } {\rm{div}}\ {\bf u}_{\pm}=0 \mbox{ in } {\Omega_{\pm}},
$
the conormal derivatives ${\bf t}_{\alpha}^{\pm}({\bf u}_{\pm},\pi_{\pm} ;{\tilde{\bf f}}_{\pm})$ are still well defined by \eqref{conormal-generalized-1-Robin}, e.g., due to Remark~\ref{CSD}. The first Green identity \eqref{conormal-generalized-2} also holds true by arguments similar to the ones in the proof of Lemma~\ref{conormal-derivative-generalized-Robin-S} and by the formula
\begin{align}
\langle \pi_{\pm},{\rm{div}}\ {{\bf w}_\pm}\rangle _{\Omega_{\pm}}
:=\langle \mathring E_{\pm}\pi_{\pm},{\rm{div}}\ {\bf W}_{\pm }\rangle _{{\mathbb R}^3}
=-\langle {\rm{grad}}(\mathring E_{\pm} \pi_{\pm}),{{\bf W}_\pm}\rangle _{{\mathbb R}^3}
=-\langle {\rm{grad}}(\mathring E_{\pm} \pi_{\pm}),{{\bf w}_\pm}\rangle _{\Omega_{\pm}},\nonumber
\end{align}
where ${{\bf W}_\pm}\in H^1({\mathbb R}^3)^3$ is such that $r_{\Omega _{\pm }}{\bf W}_\pm={\bf w}_\pm.$
Note that
${\rm grad}(\mathring E_{\pm} \pi_{\pm})\in \widetilde H^{-1}(\Omega_\pm)$
by the definition of the space $\mathfrak M(\Omega _{\pm })$ and by Lemma \ref{M-pr}.
Moreover, if
${\tilde{\bf f}}_{\pm}|_{\Omega_{\pm}}={\boldsymbol{\mathcal L}}_{\alpha }({\bf u}_{\pm},\pi_{\pm} )\in L^2(\Omega_{\pm})^3$, then we can take
${\tilde{\bf f}}_{\pm}=\mathring E_{\Omega_{\pm}}\boldsymbol{\mathcal L}_\alpha({\bf u}_{\pm},\pi_{\pm} )\in \widetilde H^0(\Omega_{\pm})^3\subset L^2(\mathbb R^3)^3$
in the definition of conormal derivative \eqref{conormal-generalized-1-Robin}, which makes it canonical,
${\bf t}_{\alpha}^{\pm}({\bf u},\pi)$, cf., e.g., \cite{Co}, \cite{Mikh}, \cite{Mikh-3}. In this case the identities \eqref{conormal-generalized-2} take the form
\begin{multline}
\label{conormal-generalized-2c}
{\pm}\left\langle {\bf t}_{\alpha}^{\pm}({\bf u}_{\pm},\pi_{\pm}),\gamma_{\pm}{{\bf w}_\pm}\right\rangle_{\partial\Omega  }=
2\langle {\mathbb E}({\bf u}_{\pm}),{\mathbb E}({{\bf w}_\pm})\rangle _{\Omega_{\pm}}
+\alpha \langle {\bf u}_{\pm},{{\bf w}_\pm}\rangle _{\Omega_{\pm}}\\
-\langle \pi_{\pm},{\rm{div}}\ {{\bf w}_\pm}\rangle _{\Omega_{\pm}}
+\langle
%\mathring E_{\Omega_{\pm}}
\boldsymbol{\mathcal L}_\alpha({\bf u}_{\pm},\pi_{\pm} ),{{\bf w}_\pm}\rangle_{\Omega_{\pm}},\
\forall\,{{\bf w}_\pm}\in H^{1}({\Omega_{\pm}})^3.
\end{multline}
Let now
{$({\bf u}_{\pm },\pi _{\pm }),({\bf w}_{\pm },q_{\pm })\in H^{1}_{\rm{div}}({\Omega_{\pm}})^3\times\mathfrak M({\Omega_{\pm}})$}
be such that
${\boldsymbol{\mathcal L}}_{\alpha }({\bf u}_{\pm},\pi_{\pm}), {\boldsymbol{\mathcal L}}_{\alpha }({\bf w}_{\pm},q_{\pm})\in L^2(\Omega_{\pm})^3$.
Then subtracting from \eqref{conormal-generalized-2c} its counterpart with swapped roles of $({\bf u}_{\pm},\pi_{\pm})$ and $({\bf w}_{\pm},q_{\pm})$, we arrive at the second Green identity for the Brinkman system,
{\begin{align}
\label{Green}
\!\!\!{\pm}\!\left\langle {\bf t}_{\alpha}^{\pm}({\bf u}_{\pm},\pi_{\pm}),\gamma_{\pm}{\bf w}_\pm \right\rangle _{\partial\Omega}
\!{\mp}\!\left\langle {\bf t}_{\alpha}^{\pm}({\bf w}_{\pm},q_{\pm}),\gamma_{\pm}{\bf u}_\pm\right\rangle_{\!\partial\Omega}
\!\!\!=\!\!\langle\mathcal L_\alpha({\bf u}_{\pm},\pi_{\pm} ),{\bf w}_\pm\rangle_{\Omega_{\pm}}
\!\!\!-\!\!\langle\mathcal L_\alpha({\bf w}_{\pm},q_{\pm} ),{\bf u}_\pm\rangle_{\Omega_{\pm}}.
\end{align}}
If, in addition, $\gamma_{+}{\bf w}_+=\gamma_{-}{\bf w}_-{=:\gamma{\bf w}}$ and
${\bf t}_{\alpha}^{+}({\bf w}_{+},q_{+})={\bf t}_{\alpha}^{-}({\bf w}_{-},q_{-}){=:{\bf t}_{\alpha}({\bf w},q)}$, then summing up equalities \eqref{Green} with upper and lower signs, the second Green identity reduces to
\begin{align}
\label{Green-s}
\left\langle [{\bf t}_{\alpha}({\bf u},\pi)],\gamma{\bf w}
\right\rangle _{\partial\Omega}
-\left\langle {\bf t}_{\alpha}({\bf w},q),[\gamma{\bf u}]\right\rangle_{\!\partial\Omega}
=\langle\boldsymbol{\mathcal L}_\alpha({\bf u},\pi),{\bf w}\rangle_{\Omega_+\cup\Omega_-}
-\langle\boldsymbol{\mathcal L}_\alpha({\bf w},q),{\bf u}\rangle_{\Omega_+\cup\Omega_-},
\end{align}
where {${\bf u},{\bf w}\in H^{1}({\mathbb R}^3\setminus \partial \Omega )^3$ are defined by ${\bf u}|_{\Omega _{\pm }}:={\bf u}_{\pm }$ and ${\bf w}|_{\Omega _{\pm }}:={\bf w}_{\pm }$}, and
$[\gamma{\bf u}]:={\gamma }_{+}({\bf u}_+)-{\gamma }_{-}({\bf u}_-)$,
$\left[{\bf t}_{\alpha }({\bf u},\pi)\right]:=
{\bf t}_{\alpha }^{+}({\bf u}_+,\pi_+)-{\bf t}_{\alpha }^{-}({\bf u}_-,\pi_-)$.
For any fixed point ${\bf y}\in {\mathbb R}^3\setminus \partial \Omega $ and any index $j\in \{1,2,3\}$, the fundamental solution $({\mathcal G}_{j.}^{\alpha }(\cdot -{\bf y}),\ \Pi _j^{\alpha }(\cdot -{\bf y}))$, given by \eqref{5.4.21}, satisfies all the above conditions {imposed to the couple} $({\bf w},q)$ in $\Omega _{+}\cup \Omega _{-}\setminus \overline B_\epsilon(\bf y)$, where $B_\epsilon(\bf y)$ is a small ball centered at the point ${\bf y}$. Modifying \eqref{Green-s} to the domains without $B_\epsilon(\bf y)$, substituting there $({\mathcal G}_{j.}^{\alpha }(\cdot-{\bf y}),\ \Pi _j^{\alpha }(\cdot-{\bf y}))$ for $({\bf w},q)$ and taking appropriate limits as $\epsilon\to 0$, we arrive at the following {\it third Green identity} ({\it representation formula}),
\begin{equation}
\label{dl-Green}
{\bf u}
={\bf V}_{\alpha ;\partial \Omega }\left[{\bf t}_{\alpha }({\bf u},p)\right]
-{\bf W}_{\alpha ;\partial \Omega }\left[{\gamma }{\bf u}\right]
+{\boldsymbol{\mathcal N}}_{\alpha;\Omega_+\cup\Omega_-}{\boldsymbol{\mathcal L}}_{\alpha }({\bf u},p)
\ \mbox{ in } \ {\mathbb R}^3\setminus \partial \Omega ,
\end{equation}
for any $({\bf u},\pi )\in H^{1}_{\rm{div}}({\Omega_{\pm}})^3\times\mathfrak M({\Omega_{\pm}})$
such that
${\boldsymbol{\mathcal L}}_{\alpha }({\bf u}_{\pm},\pi_{\pm})\in L^2(\Omega_{\pm})^3$.

{Let us give} the main properties of layer potentials for the Brinkman system{, which are also partly available in \cite[Lemma 3.1]{K-L-W} for interior and in} \cite[Proposition 2.3]{B-H} for exterior Lipschitz domains.
\begin{lem}
\label{layer-potential-properties} Let
$\Omega :=\Omega _{+}\subset {\mathbb R}^3$ be a bounded Lipschitz domain with connected boundary $\partial\Omega $. Let $\alpha >0$ be a given constant.
\begin{itemize}
\item[$(i)$] Then the following operators are linear and bounded,
\begin{align}
\label{ss-s1}
&\left({\bf V}_{\alpha ;\partial\Omega }\right)|_{\Omega _{+}}:H^{-\frac{1}{2}}(\partial\Omega )^3\to H^{1}(\Omega _{+})^3,\
\left({\mathcal Q}_{\alpha ;\partial\Omega }^s\right)|_{\Omega _{+}}:H^{-\frac{1}{2}}(\partial\Omega )^3\!\to L^2(\Omega _{+}),\\
\label{ds-s1}
&\left({\bf W}_{\alpha ;\partial\Omega }\right)|_{\Omega _{+}}:H^{\frac{1}{2}}(\partial\Omega )^3\to H^{1}(\Omega _{+})^3,\
\big({\mathcal Q}_{\alpha ;\partial\Omega }^d\big)|_{\Omega _{+}}\!:\!H^{\frac{1}{2}}(\partial\Omega )^3\to L^2(\Omega _{+}).
\end{align}
\item[$(ii)$] Let $\Omega _{-}:={\mathbb R}^3\setminus \overline{{\Omega }}$. Then the following
operators are linear and bounded,
\begin{align}
\label{exterior-weight}
&{\left({\bf V}_{\alpha ;\partial\Omega }\right)|_{\Omega _{-}}\!:\!H^{-\frac{1}{2}}(\partial\Omega )^3\!\to H^{1}({\Omega }_{-})^3,}\ \left({\mathcal Q}_{\alpha ;\partial\Omega }^s\right)|_{\Omega _{-}}:H^{-\frac{1}{2}}(\partial\Omega )^3\to L^{2}(\Omega _{-}),\\
\label{exterior-weight-s}
&{\left({\bf W}_{\alpha ;\partial\Omega }\right)|_{\Omega _{-}}\!:\!H^{\frac{1}{2}}(\partial\Omega )^3\!\to \! H^{1}({\Omega }_{-})^3,}\ {\big({\mathcal Q}_{\alpha ;\partial\Omega }^d\big)|_{\Omega _{-}}:H^{\frac{1}{2}}(\partial\Omega )^3\to \mathfrak M(\Omega _{-})}.
\end{align}
\item[$(iii)$]
Let ${\bf h}\in H^{\frac{1}{2}}(\partial\Omega )^3$ and ${\bf g}\in
H^{-\frac{1}{2}}(\partial\Omega )^3$. Then the following relations hold a.e. on $\partial\Omega $,
\begin{align}
\label{68}
&\gamma_+\big({\bf V}_{\alpha ;\partial\Omega }{\bf g}\big)
=\gamma_{-}\big({\bf V}_{\alpha
;{ \partial\Omega } }{\bf g}\big)
=:{\boldsymbol{\mathcal V}}_{\alpha ; \partial\Omega }{\bf g},
\\
\label{68-s1}
&
\frac{1}{2}{\bf h}+\gamma_{+}({\bf W}_{\alpha ;\partial\Omega }{\bf h})=
-\frac{1}{2}{\bf h}+\gamma_{-}({\bf W}_{\alpha ;\partial\Omega }{\bf h})
=:{\bf K}_{\alpha ;{ \partial\Omega }}{\bf h},
\\
\label{70aaa}
&-\frac{1}{2}{\bf g}+{\bf t}_{\alpha }^{+}\left({\bf V}_{\alpha ;\partial\Omega }{\bf g},{\mathcal Q}_{\alpha
;\partial\Omega }^s{\bf g}\right)
=\frac{1}{2}{\bf g}+{\bf t}_{\alpha }^{-}\left({\bf V}_{\alpha ;\partial\Omega }{\bf g},{\mathcal Q}_{\alpha
;\partial\Omega }^s{\bf g}\right)
=:{\bf K}_{\alpha ;{ \partial\Omega } }^*{\bf g},
\\
\label{70aaaa}
&{\bf t}_{\alpha }^{+}\big({\bf W}_{\alpha ;\partial\Omega }{\bf h},{\mathcal Q}_{\alpha ; \partial\Omega }^d{\bf h}\big)
={\bf t}_{\alpha }^{-}\big({\bf W}_{\alpha ;\partial\Omega }{\bf h},{\mathcal Q}_{\alpha ;\partial\Omega }^d{\bf h}\big)
=:{\bf D}_{\alpha ;\partial\Omega }{\bf h},
\end{align}
where ${\bf K}_{\alpha ;\partial\Omega }^*$ is the transpose of ${\bf K}_{\alpha ;\partial\Omega }$, and the following operators are linear and bounded,
\begin{align}
\label{ss-s2}
&{\boldsymbol{\mathcal V}}_{\alpha ;\partial\Omega }:H^{-\frac{1}{2}}(\partial\Omega )^3\to H^{\frac{1}{2}}(\partial\Omega )^3,\
{\bf K}_{\alpha ;\partial\Omega }:H^{\frac{1}{2}}(\partial\Omega )^3\to H^{\frac{1}{2}}(\partial\Omega )^3,\\
\label{ds-s2}
&{\bf K}_{\alpha ;\partial\Omega }^*:H^{-\frac{1}{2}}(\partial\Omega )^3\to
H^{-\frac{1}{2}}(\partial\Omega )^3,\ {\bf D}_{\alpha ;\partial\Omega }:H^{\frac{1}{2}}(\partial\Omega )^3\to H^{-\frac{1}{2}}(\partial\Omega )^3.
\end{align}
\end{itemize}
\end{lem}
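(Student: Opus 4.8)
The plan is to deduce everything from the corresponding facts for the Stokes system, already collected in Lemma~\ref{layer-potential-properties-Stokes}, together with the results available for the Brinkman potentials in \cite[Lemma 3.1]{K-L-W} (interior Lipschitz domains) and \cite[Proposition 2.3]{B-H} (exterior Lipschitz domains), filling in only the genuinely new points. The starting observation, read off from the explicit formulas \eqref{5.4.21}, \eqref{5.4.22}, \eqref{fundamental-solution-Brinkman2-new} and \eqref{pressure-tensor-B}, is that $\Pi^\alpha=\Pi$, that $\Lambda^\alpha_{jk}({\bf x})=\Lambda_{jk}({\bf x})+\frac{\alpha}{4\pi}\frac{\delta_{jk}}{|{\bf x}|}$, and that the leading diagonal singularities of $\mathcal G^\alpha$ and $S^\alpha$ coincide with those of $\mathcal G$ and $S$ (since $A_1(z),A_2(z)\to1$, $D_1(z),D_2(z)\to0$, $D_3(z)\to3$ as $z\to0$); hence $\mathcal G^\alpha-\mathcal G$ is bounded near the diagonal and $S^\alpha-S$ is weakly singular of order $|{\bf x}-{\bf y}|^{-1}$, so the corresponding remainder potentials have strictly better mapping properties than the Stokes ones. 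Conversely, for the behaviour at infinity one uses $A_1(z),A_2(z)=O(z^{-2})$ and $D_1(z),D_2(z),D_3(z)=O(1)$ as $z\to\infty$, giving $\mathcal G^\alpha({\bf x})=O(|{\bf x}|^{-3})$, $\nabla\mathcal G^\alpha({\bf x})=O(|{\bf x}|^{-4})$, $S^\alpha({\bf x})=O(|{\bf x}|^{-2})$ and $\nabla S^\alpha({\bf x})=O(|{\bf x}|^{-3})$.

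For items $(i)$ and $(ii)$ I would argue as follows. Near $\partial\Omega$ the difference of the Brinkman and Stokes kernels is smooth, so on Lipschitz subdomains touching $\partial\Omega$ one reduces $(i)$ and the first parts of $(ii)$ to the Stokes estimates \eqref{ss-s1-Stokes}--\eqref{ds-s1-Stokes} plus the (more regular) remainder operators; away from $\partial\Omega$, i.e.\ outside a large ball $B_R\supset\overline\Omega$, the full Brinkman kernels $\mathcal G^\alpha$, $S^\alpha$ and their ${\bf x}$-gradients are smooth and decay fast enough (by the asymptotics above, e.g.\ $\|\mathcal G^\alpha({\bf x}-\cdot)\|_{H^{1/2}(\partial\Omega)}=O(|{\bf x}|^{-3})$) that the potentials and their gradients are square integrable there; a partition of unity then yields membership in the \emph{unweighted} space $H^1(\Omega_-)^3$ in \eqref{exterior-weight}--\eqref{exterior-weight-s}. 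This part coincides with \cite[Lemma 3.1]{K-L-W} and \cite[Proposition 2.3]{B-H}. The new point is the pressure double layer in \eqref{exterior-weight-s}: from $\Lambda^\alpha-\Lambda=\frac{\alpha}{4\pi}\frac{\delta_{jk}}{|{\bf x}|}$ one gets the exact identity $\mathcal Q^d_{\alpha;\partial\Omega}{\bf h}=\mathcal Q^d_{\partial\Omega}{\bf h}-\alpha\,V_{\triangle}(\nu\cdot{\bf h})$, with $V_{\triangle}$ the Laplace single-layer potential \eqref{Laplace-single-layer}; by Lemma~\ref{layer-potential-properties-Stokes}, $\mathcal Q^d_{\partial\Omega}{\bf h}|_{\Omega_-}\in L^2(\Omega_-)$, while $\nu\cdot{\bf h}\in L^2(\partial\Omega)\subset H^{-\frac12}(\partial\Omega)$ and continuity of $V_{\triangle}$ into the weighted space (cf.\ \cite[Theorem 4.1]{Ch-Mi-Na-3}, \cite[Theorem 4.1]{Lang-Mendez}) give $V_{\triangle}(\nu\cdot{\bf h})|_{\Omega_-}\in\mathcal H^1(\Omega_-)$; hence $\mathcal Q^d_{\alpha;\partial\Omega}{\bf h}|_{\Omega_-}\in L^2(\Omega_-)+\mathcal H^1(\Omega_-)\subset\mathfrak M(\Omega_-)$ by the embedding chain \eqref{L2M} and Lemma~\ref{M-pr}, and the same decomposition gives $\mathcal Q^d_{\alpha;\partial\Omega}{\bf h}|_{\Omega_+}\in L^2(\Omega_+)$, consistent with \eqref{ds-s1}.

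For item $(iii)$: since $({\bf V}_{\alpha;\partial\Omega}{\bf g},\mathcal Q^s_{\partial\Omega}{\bf g})$ and $({\bf W}_{\alpha;\partial\Omega}{\bf h},\mathcal Q^d_{\alpha;\partial\Omega}{\bf h})$ solve the homogeneous Brinkman system in ${\mathbb R}^3\setminus\partial\Omega$ (by \eqref{Brinkman-operator4}, \eqref{Brinkman-operator6}), and the leading kernel singularities are those of Stokes, the classical non-tangential trace and jump formulas (Mitrea--Wright \cite[Chapter 10]{M-W}, \cite{Fa-Ke-Ve}) give \eqref{68} and \eqref{68-s1}, with ${\boldsymbol{\mathcal V}}_{\alpha;\partial\Omega}$ and ${\bf K}_{\alpha;\partial\Omega}$ bounded as in \eqref{ss-s2} upon composing $(i)$--$(ii)$ with the trace operators of Lemmas~\ref{trace-operator1} and \ref{trace-operator1-S}. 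Because the potentials solve the homogeneous system, their one-sided conormal derivatives are the \emph{canonical} ones ($\tilde{\bf f}={\bf 0}$ in Lemma~\ref{conormal-derivative-generalized-Robin}, cf.\ Remark~\ref{Rm2.4}) and lie in $H^{-1/2}(\partial\Omega)^3$; the jump relations \eqref{70aaa}, \eqref{70aaaa} then follow from the Stokes relations \eqref{70aaa-Stokes}, \eqref{70aaaa-Stokes} plus the fact that the remainder contributions are continuous across $\partial\Omega$ (alternatively from the third Green identity \eqref{dl-Green}). Boundedness of ${\bf K}^*_{\alpha;\partial\Omega}$ and ${\bf D}_{\alpha;\partial\Omega}$ in \eqref{ds-s2} is immediate from Lemma~\ref{conormal-derivative-generalized-Robin}, and the transpose relation ${\bf K}^*_{\alpha;\partial\Omega}=({\bf K}_{\alpha;\partial\Omega})^{\top}$ from the second Green identity \eqref{Green-s} for the Brinkman system.

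The hard part will be item $(ii)$: establishing membership in the unweighted $H^1(\Omega_-)^3$ (rather than merely the weighted $\mathcal H^1(\Omega_-)^3$ that is optimal for the Stokes single layer) rests on the precise decay of the Brinkman fundamental solution and on a careful near-/far-field splitting, and the pressure double layer forces one to work with the nonstandard space $\mathfrak M(\Omega_-)$ and Lemma~\ref{M-pr}. A secondary subtlety is to make sure that the conormal-derivative jump relations, stated via the weak (variational) conormal derivative of Lemma~\ref{conormal-derivative-generalized-Robin}, agree with the classical non-tangential stress traces used in the Stokes literature being cited.
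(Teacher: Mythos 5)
Your proposal is correct in substance and reaches all the assertions, but for the velocity potentials it takes a genuinely different route from the paper. You work at the level of kernels: a near-/far-field splitting, the observation that $\mathcal G^{\alpha}-\mathcal G$ and $S^{\alpha}-S$ have weaker diagonal singularities than their Stokes counterparts, and the exponential decay encoded in $A_1,A_2,D_1,D_2,D_3$ to obtain the unweighted $H^{1}(\Omega_{-})^3$ memberships, with the jump relations inherited from the Stokes ones because the remainders do not jump. The paper avoids kernel estimates entirely via two exact operator identities: ${\bf V}_{\alpha;\partial\Omega}={\boldsymbol{\mathcal N}}_{\alpha;\mathbb R^3}\gamma'$, so that \eqref{ss-s1}, \eqref{exterior-weight} and \eqref{68} follow from Lemma \ref{Newtonian-B} and trace theory, and the formula ${\bf W}_{\alpha;\partial\Omega}{\bf h}={\bf W}_{\partial\Omega}{\bf h}+\alpha\,{\boldsymbol{\mathcal N}}_{\alpha;\mathbb R^3}{\bf W}_{\partial\Omega}{\bf h}$ of \eqref{dl-GreenW}, obtained by applying the third Green identity \eqref{dl-Green} to the Stokes double layer regarded as a solution of the Brinkman system with right-hand side $-\alpha{\bf W}_{\partial\Omega}{\bf h}$; from it \eqref{ds-s1}, \eqref{exterior-weight-s}, \eqref{68-s1} and \eqref{70aaaa} drop out because the Newtonian correction lies in $H^{2}_{\rm loc}(\mathbb R^3)^3$ and hence has no jumps. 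Your treatment of the pressure potentials (the identity ${\Pi}^{\alpha}={\Pi}$ for the single layer, and ${\mathcal Q}^{d}_{\alpha;\partial\Omega}{\bf h}={\mathcal Q}^{d}_{\partial\Omega}{\bf h}-\alpha V_{\triangle}(\nu\cdot{\bf h})$ with $V_{\triangle}$ mapping into $\mathcal H^{1}(\Omega_{-})\subset\mathfrak M(\Omega_{-})$) coincides exactly with the paper's \eqref{double-layer-pressure-B}. What the paper's route buys is that one never has to justify that a weakly singular remainder kernel maps an $H^{-\frac{1}{2}}(\partial\Omega)^3$ density into $H^{1}$ — the one place where your argument leans on \cite[Lemma 3.1]{K-L-W} and \cite[Proposition 2.3]{B-H}; note also that ``the difference of the kernels is smooth near $\partial\Omega$'' is an overstatement, since the differences are only bounded (resp.\ $O(|{\bf x}-{\bf y}|^{-1})$ after one differentiation) at the diagonal, which is enough but must be phrased as such. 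What your route buys is the explicit far-field decay rates, which the paper never states but which make the unweighted $H^{1}(\Omega_{-})^3$ target in \eqref{exterior-weight}--\eqref{exterior-weight-s} transparent. Your closing caveat about reconciling the weak conormal derivative with the non-tangential stress traces is precisely the point the paper settles through Remark \ref{Rm2.4} and the equality ${\bf t}_{\alpha}^{\pm}={\bf t}_{0}^{\pm}$ for canonical conormal derivatives of solutions with $L^2$ right-hand sides.
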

\begin{proof}
{Let ${\gamma }:{H}^{1}({\mathbb R}^3)^3\to H^{\frac{1}{2}}(\partial \Omega )^3$ and ${\gamma }':H^{-\frac{1}{2}}(\partial \Omega )^3\to {H}^{-1}({\mathbb R}^3)^3$ be, respectively, the trace operator and its transpose operator}. Both operators are continuous. The volume potential {operator} ${\boldsymbol{\mathcal N}}_{\alpha ;{\mathbb R}^3}:H^{-1}({\mathbb R}^3)^3\to H^{1}({\mathbb R}^3)^3$ is also continuous (see (\ref{Newtonian-B-R})). Then, similar to the argument in the proof of Lemma~\ref{layer-potential-properties-Stokes}, the single-layer {operator} {can be represented as
${\bf V}_{{\alpha ; \partial\Omega }}={\boldsymbol{\mathcal N}}_{\alpha ;{\mathbb R}^3} {\gamma }'$
and hence}
\begin{align}
\label{sl-1-ms}
{{\bf V}_{\alpha ; \partial\Omega }}
%={\boldsymbol{\mg\mathcal N}}_{\alpha ;{\mathbb R}^3} {\gamma }'
:H^{-\frac{1}{2}}(\partial \Omega )^3\to H^{1}({\mathbb R}^3)^3,\
\left({\bf V}_{{\alpha ; \partial\Omega }}\right)|_{\Omega _{\pm }}:H^{-\frac{1}{2}}(\partial \Omega )^3\to H^{1}(\Omega _{\pm })^3,
\end{align}
are continuous operators (see \cite[Corollary 2.5]{B-H} for a different argument). This property shows the {continuity} of the first operators in (\ref{ss-s1}) and (\ref{exterior-weight}), {as well as continuity of traces \eqref{68}}.
{Continuity} of the second operators in (\ref{ss-s1}) and (\ref{exterior-weight}), ${\mathcal Q}_{{\alpha ; \partial\Omega }}^s:H^{-\frac{1}{2}}(\partial \Omega )^3\to L^{2}(\Omega _{\pm })$, follows from the equality of the {single layer pressure potentials} for the Brinkman and Stokes systems,
{${\mathcal Q}_{{\alpha ; \partial\Omega}}^s={\mathcal Q}_{{\partial\Omega}}^s$,
see \eqref{58},} and continuity of the second operators in (\ref{ss-s1-Stokes}), \eqref{ss-s1-ms}.
{The jump property \eqref{70aaa} follows by a straightforward adaptation of the corresponding arguments of \cite[Lemma 4.1]{Co}.}

Next we show the continuity of second operators in \eqref{ds-s1} and (\ref{exterior-weight-s}).
{First we use formula \eqref{pressure-tensor-B} and the second formula in \eqref{fundamental-solution-Brinkman2}, and deduce that the fundamental pressure tensor for the Brinkman system, $\Lambda ^{\alpha }$, can be written as
%\begin{align}
%\label{pressure-Brinkman-Stokes}
${\Lambda }^{\alpha }_{jk}({\bf x},{\bf y})={\Lambda }_{jk}({\bf x},{\bf y})+ \dfrac{\alpha}{{4\pi}|{\bf x}-{\bf y}|}\delta _{jk}.$
%\end{align}
Therefore,
the Brinkman double layer pressure {potential}
is given by
\begin{align}
\label{double-layer-pressure-B}
{\mathcal Q}^d_{\alpha ;\partial \Omega }\mathbf h
={\mathcal Q}^d_{\partial \Omega }\mathbf h -\alpha V_{\triangle }({\bf h} \cdot \nu )
\ \mbox{ in } \ {\mathbb R}^3\setminus \partial \Omega,\ \forall \ \mathbf h\in H^{\frac{1}{2}}(\partial\Omega )^3,
\end{align}
where $V_{\triangle }$ is the Laplace single-layer {potential \eqref{Laplace-single-layer}. Since ${\bf h}\cdot\nu\in L^2(\partial\Omega)$, the continuity of the operator
$V_{\triangle }:L^2(\partial \Omega )\subset H^{-\frac{1}{2}}(\partial\Omega)\to {\mathcal H}^{1}(\Omega _{-})\subset \mathfrak M(\Omega _{-})$ and continuity of the second operator in \eqref{ds-s1-ms}}
{along with the inclusion $L^2(\Omega _{-})\subset \mathfrak M(\Omega _{-})$ imply continuity of the
second operator in (\ref{exterior-weight-s}).}
The continuity of the second operator in \eqref{ds-s1} follows with a similar argument.}

Let us show continuity of the first operators in \eqref{ds-s1} and \eqref{exterior-weight-s}, by using arguments similar to those in the proof of \cite[Theorem 1]{Co}.
For a function $\mathbf h\in H^{\frac{1}{2}}(\partial\Omega )^3$ let us consider the couple
$({\bf u}_{\mathbf h},p_{\mathbf h})
=\big({\bf W}_{\partial\Omega }{\bf h},{\mathcal Q}_{\partial\Omega }^d{\bf h}\big)$
given by the Stokes double layer velocity and pressure potentials.
Due to statement (i) of Lemma~\ref{layer-potential-properties-Stokes},
$({\bf u}_{\mathbf h},p_{\mathbf h})\in {H}^{1}(\Omega _\pm)^3\times L^2(\Omega _\pm)\subset {H}^{1}(\Omega _\pm)^3\times \mathfrak M(\Omega _\pm)$
and, moreover,
${\boldsymbol{\mathcal L}}_{\alpha }({\bf u}_{\mathbf h},p_{\mathbf h})
={\boldsymbol{\mathcal L}}_{0}({\bf u}_{\mathbf h},p_{\mathbf h})-\alpha {\bf u}_{\mathbf h}
={-\alpha}{\bf u}_{\mathbf h}\in {H}^{1}(\Omega _\pm)^3\subset L^2(\Omega _\pm),$
where ${\boldsymbol{\mathcal L}}_{\alpha }$ is given by \eqref{matrix-pseudo-diff1}.
Then $({\bf u}_{\mathbf h},p_{\mathbf h})$ satisfy the representation formula
(the third Green identity) \eqref{dl-Green}.
Due to {\eqref{68-s1-Stokes}} and \eqref{70aaaa-Stokes}, we have $[{\gamma }{\bf u}_{\mathbf h}]=-\mathbf h$, and moreover, $\left[{\bf t}_{\alpha }({\bf W}_{\partial\Omega }{\bf h},{\mathcal Q}_{\partial\Omega }^d{\bf h})\right]={\bf 0}$.
Then the Brinkman double layer velocity potential can be also written as
\begin{equation}
\label{dl-GreenW}
{\bf W}_{\alpha ;\partial \Omega }\mathbf h={\bf W}_{\partial\Omega }{\bf h}
%-{\bf V}_{\alpha ;\partial \Omega }\left[{\bf t}_{\alpha }({\bf W}_{\partial\Omega }{\bf h},
%{\mathcal Q}_{\partial\Omega }^d{\bf h})\right]
+\alpha{\boldsymbol{\mathcal N}}_{\alpha;\mathbb R^3}{\bf W}_{\partial\Omega }{\bf h}
\ \mbox{ in } \ {\mathbb R}^3\setminus \partial \Omega,\ \forall \ \mathbf h\in H^{\frac{1}{2}}(\partial\Omega )^3 .
\end{equation}
{In addition, the last term in \eqref{double-layer-pressure-B} can be expressed as
%\begin{align}
%\label{dlp}
$
V_{\triangle }({\bf h} \cdot \nu )=-\mathcal Q_{\mathbb R^3}{\bf W}_{\partial\Omega }{\bf h}.
$
%\end{align}
}
Indeed, by using again the property that $[{\gamma }{\bf u}_{\mathbf h}]=-\mathbf h$, where
${\bf u}_{\mathbf h}={\bf W}_{\partial\Omega }{\bf h}$, we obtain
\begin{align}
\left(\mathcal Q_{\mathbb R^3}{\bf u}_{\mathbf h}\right)({\bf x})
&=-\int_{\Omega_-\cup\Omega_+}\frac{{\bf x}-{\bf y}}{4\pi|{\bf x}-{\bf y}|^3}\cdot{\bf u}_{\mathbf h}({\bf y})d{\bf y}
=-\frac{1}{4\pi}\int_{\Omega_-\cup\Omega_+}\left\{\nabla_y\frac{1}{|{\bf x}-{\bf y}|}\right\}
\cdot{\bf u}_{\mathbf h}({\bf y})d{\bf y}\nonumber\\
&=-\frac{1}{4\pi}\int_{\partial\Omega}\frac{1}{|{\bf x}-{\bf y}|}
\nu_y\cdot[\gamma{\bf u}_{\mathbf h}({\bf y})]d{\bf y}
+\frac{1}{4\pi}\int_{\Omega_-\cup\Omega_+}\frac{1}{|{\bf x}-{\bf y}|}{{\rm div}}\,{\bf u}_{\mathbf h}({\bf y})d{\bf y}\nonumber\\
&=V_{\triangle }(\nu\cdot[\gamma{\bf u}_{\mathbf h}])({\bf x})
=-V_{\triangle }({\bf h} \cdot \nu)({\bf x}).\nonumber
\end{align}
\comment{\rd Then by \eqref{double-layer-pressure-B} and \eqref{dlp} the Brinkman double layer pressure potential can be expressed as
\begin{align}
\label{double-layer-pressure-B-1}
{\mathcal Q}^d_{\alpha ;\partial \Omega }\mathbf h
={\mathcal Q}^d_{\partial \Omega }\mathbf h +\mathcal Q_{\mathbb R^3}{\bf W}_{\partial\Omega }{\bf h}
\ \mbox{ in } \ {\mathbb R}^3\setminus \partial \Omega .
\end{align}
} %\comment end
Now, the continuity of the operators involved in \eqref{dl-GreenW} leads to the continuity of the first operators in (\ref{ds-s1}) and \eqref{exterior-weight-s}.
Jump formulas (\ref{68-s1}) for the double-layer potential ${\bf W}_{\alpha ;\partial\Omega }{\bf h}$ follow from the formula \eqref{dl-GreenW}, combined with the jump relations \eqref{68-s1-Stokes} satisfied by the Stokes double layer potential ${\bf W}_{\partial\Omega }{\bf h}$, as well as the continuity of the Brinkman Newtonian potential ${\boldsymbol{\mathcal N}}_{\alpha;\mathbb R^3}$ {across $\partial \Omega $} (see \eqref{Newtonian-B-R}).
Continuity of the conormal derivatives \eqref{70aaaa} is implied by the equality
${\bf t}_{\alpha }^{\pm}\big({\bf W}_{\partial\Omega }{\bf h},{\mathcal Q}_{\partial\Omega }^d{\bf h}\big)
={\bf t}_{0}^{\pm}\big({\bf W}_{\partial\Omega }{\bf h},{\mathcal Q}_{\partial\Omega }^d{\bf h}\big)$,
{relations \eqref{double-layer-pressure-B} and \eqref{dl-GreenW},
the embedding ${\boldsymbol{\mathcal N}}_{\alpha;\mathbb R^3}{\bf W}_{\partial\Omega}{\bf h}\in {H}^{2}_{\rm loc}(\mathbb R^3)^3$
and continuity of the potential $V_{\triangle }({\bf h} \cdot \nu )$ across $\partial\Omega$.}
Jump formulas \eqref{68-s1}-\eqref{70aaaa} can be also obtained by exploiting arguments similar to those {for} $\alpha =0$ (see, e.g., \cite{M-W} for further details).
\end{proof}

\section*{\bf Acknowledgements}
%\noindent{\bf \Large Acknowledgements}.
The research has been partially supported by the grant EP/M013545/1: "Mathematical Analysis of Boundary-Domain Integral Equations for Nonlinear PDEs" from the EPSRC, UK. The first author acknowledges also the support of the grant PN-II-ID-PCE-2011-3-0994 of the Romanian National Authority for Scientific Research, CNCS - UEFISCDI. The second author acknowledges also the support of INdAM-GNAMPA. %Part of this work was done in April and August 2015, when M. Kohr visited the Department of Mathematics of the University of Toronto. She is grateful to the members of this department for their hospitality.

\end{document}